\providecommand{\keywords}[1]{\textbf{\textit{Key words and phrases }} #1}
\providecommand{\subjclass}[1]{\textbf{\textit{2010 Mathematics Subject Classification.}} #1}
\theoremstyle{definition}
\newtheorem{theo}{Theorem}[subsection]
\newtheorem{pr}[theo]{Proposition}
 \newtheorem{lem}[theo]{Lemma}
 \newtheorem{coro}[theo]{Corollary}
\theoremstyle{remark}
\newtheorem{rema}[theo]{Remark}
\newtheorem{defi}[theo]{Definition}
 \newcommand\lan{\langle}
\newcommand\ra{\rangle}
\newcommand\ral{\rangle^{\al}}
\newcommand\ralo{\rangle^{\aleph_1}}
\newcommand\rab{\rangle^{\operatorname{cl}}}
\newcommand\bral{]^{\al}}
\newcommand\brab{]^{\operatorname{cl}}}
\newcommand\bralo{]^{\aleph_1}}
\newcommand\ob{^{-1}}
\newcommand\obj{\operatorname{Obj}}
\newcommand\mo{\operatorname{Mor}}
\newcommand\id{\operatorname{id}}
\newcommand\hu{\underline{H}}
\newcommand\cu{\underline{C}}
\newcommand\du{\underline{D}}
\newcommand\au{\underline{A}}
\newcommand\bu{\underline{B}}
\newcommand\eu{\underline{E}}
\newcommand\abo{{_{a-}}}
\newcommand\rinf{R_{<\infty}}
\newcommand\linf{L_{+\infty}}
\newcommand\n{\mathbb{N}}
\newcommand\z{{\mathbb{Z}}}
\newcommand\ab{\underline{\operatorname{Ab}}}
\newcommand\al{\alpha}
\newcommand\be{\beta}
\newcommand\ffab{\underline{\operatorname{FGFrAb}}}
\newcommand\shtop{\operatorname{SH}}
\newcommand\sph{_{\operatorname{sph}}}
\newcommand\fin{\operatorname{fin}}
\newcommand\shfin{\shtop^{\fin}}
\newcommand\ns{\{0\}}
\DeclareMathOperator\prli{\varprojlim}
\DeclareMathOperator\inli{\varinjlim}
\DeclareMathOperator\hcl{\underrightarrow{\operatorname{holim}}}
\DeclareMathOperator\co{\operatorname{Cone}}
\DeclareMathOperator\kar{\operatorname{Kar}}
\newcommand\hrt{{\underline{Ht}}}
\newcommand\hw{{\underline{Hw}}}
\newcommand\hv{{\underline{Hv}}}
\newcommand\hf{HF}
\newcommand\hpi{H\pi} 
\newcommand\cuw{\cu_w}
\newcommand\wstu{w^{st}} 
\newcommand\perpp{{}^{\perp}}
\newcommand\lo{\mathcal{LO}}
\newcommand\cp{\mathcal{P}}
\newcommand\cq{\mathcal{Q}}
\newcommand\alz
\newcommand\alo{{\aleph_1}}
\newcommand\w{{\mathfrak{w}}}
\newcommand\tal{{^{\al}}}
\newcommand\otal{{^{\oplus \al}}}
\newcommand\oi{{^{\oplus I}}}
\newcommand\tbig{{^{\operatorname{cl}}}}
\numberwithin{equation}{subsection}
\begin{document}

 \title{On purely generated $\al$-smashing weight structures and weight-exact localizations} 
 \author{Mikhail V. Bondarko, Vladimir A. Sosnilo
   \thanks{Research is supported by the Russian Science Foundation grant no. 16-11-10073.}}\maketitle
\begin{abstract}
This paper is dedicated to new methods of constructing weight structures and weight-exact localizations; our arguments generalize their bounded versions considered in previous papers of the authors. We start from a class of objects $\cp$ of a triangulated category $\cu$ that satisfies a certain {\it (countable) negativity} condition (there are no $\cu$-extensions of positive degrees between elements of $\cp$; we actually need a somewhat stronger condition of this sort) to obtain a weight structure both "halves" of which are closed either with respect to $\cu$-coproducts of less than $\al$ objects (where $\al$ is a fixed regular cardinal) or with respect to all coproducts (provided that $\cu$ is closed with respect to coproducts of this sort). This construction gives all "reasonable" weight structures satisfying the latter conditions. In particular, 
 one can 
 obtain certain weight structures on spectra (in $\shtop$) consisting of less than $\al$ cells, 
 and on certain localizations of $\shtop$; these results are new.  

\end{abstract}

\subjclass{Primary 18E30  18E40 18E35; Secondary 18G25  55P42.}

\keywords{Triangulated categories, weight structures, weight-exact functors, localizations, 
 $\al$-smashing classes, localizing subcategories, compact objects, stable homotopy category, generalized universal localizations of rings, 
perfect classes.} 

\tableofcontents

 \section*{Introduction} 

This paper is dedicated to new methods of constructing weight structures and weight-exact localizations; we mostly consider triangulated categories that are closed with respect to coproducts (at least, of bounded cardinality). Our results vastly extend the properties of bounded weight structures considered in previous papers.

 We recall that weight structures  are important counterparts of $t$-structures; they were independently introduced by the first author (in \cite{bws}) and  by D. Pauksztello (under the name of co-$t$-structures; see \cite{paucomp}).  Weight structures have found several applications to motives and representation theory, and there also exist Hodge-theoretic and "topological" examples (we will often mention the latter in the current paper). Somewhat similarly to $t$-structures, a weight structure $w$ on a triangulated category $\cu$ is given by a couple of classes of objects $\cu_{w\ge 0}$ and $\cu_{w\le 0}$ satisfying certain axioms.

Certainly, this makes  the construction and the classification of weight structures important problems. There are two main methods for constructing a weight structure on $\cu$ starting from a specified class of its objects. 

Firstly (as proved in \cite{bws}), one can  start from a class $N$ of objects that are "pure", i.e., they "should belong" to the heart  $\cu_{w= 0} =\cu_{w\ge 0}\cap\cu_{w\le 0}$ of some $w$. Then $N$ should be {\it negative} (i.e., there exist only zero morphisms between $N$ and $N[i]$ for $i>0$); if this is the case then there exists a unique {\it bounded} weight structure $w'$ on the smallest strict triangulated subcategory $\cu'$  of $\cu$ containing $N$ such that $N\subset \cu'_{w'= 0}$. 
  Moreover, this method yields all bounded weight structures. Yet it gives no unbounded weight structures; 
	 thus it is not appropriate for "large" triangulated categories (say, for those that are closed with respect to coproducts).\footnote{Note also that this method along with its generalization considered in the current paper cannot have any reasonable analogues for $t$-structures.} 

Secondly, for a given set $L$ one can look for a minimal class $\lo\supset L$ that can be completed to a weight structure (i.e., there exists $w$ such that $\lo=\cu_{w\le 0}$; one says that $w$ is {\it generated} by $L$ if $\cu_{w\ge 0}$ consists precisely of those 
$P\in \obj \cu$ such that there are only zero morphisms between $L$ and $P[i]$ for $i>0$). D. Pauksztello has proved in \cite{paucomp} that any set of {\it compact} objects of $\cu$ does generate some weight structure (note that this weight structure is automatically unique), and in \cite{bpure} it was proved that it suffices to assume that $L$ is a {\it perfect} set 
(as defined by A. Neeman and H. Krause).  
 Any weight structure obtained this way is automatically   {\it smashing}, i.e.,  $\cu_{w\ge 0}$ is closed with respect to small $\cu$-coproducts (whereas $\cu_{w\le 0}$ is closed with respect to coproducts automatically). 
This assumption on weight structures is rather reasonable since it gives certain "control of weights" (see Proposition 2.5.1 and Theorem 
 4.4.3 of \cite{bpure}, and Proposition 2.3.2 of \cite{bwcp}).\footnote{Moreover, for  any triangulated category $\cu$ that is {\it well generated} in the sense of Neeman,   any smashing weight structure $w$ on $\cu$ is generated by a perfect set of objects; furthermore, $w$ is also {\it (strongly) well generated} in a certain sense (see Theorem 4.4.3(II.2) of \cite{bpure}). The latter statement doesn't make the construction of perfectly generated weight structures that are not compactly generated easy; however, a rich family of examples  
 is mentioned in Remark 4.3.10 of ibid. (these weight structures are dual to the ones constructed in Corollary 4.3.9(2) of ibid. by means of Brown-Comenetz duality).} 

However, the second approach does not 
 give a "precise" description of the weight structure generated by 
  $L$. Moreover, one surely cannot apply it (in its current form) to the classification of weight structures that are just {\it $\al$-smashing} (i.e., $\cu$ and  $\cu_{w\le 0}$ are closed only with respect to coproducts of less than $\al$ objects, where $\al$ is a fixed infinite regular cardinal number). 
	
	In the current paper  we  generalize the first method to obtain a very general construction of $\al$-smashing weight structures (starting from a class $N$ that satisfies a certain stronger version of the negativity condition). 
	Our  Theorem \ref{tvneg} essentially (cf. Corollary \ref{classgws} where we extend this theorem to the setting of smashing weight structures) vastly generalizes 
	 \cite[Theorem 4.5.2]{bws};\footnote{And the argument we use for the proofs is quite different from the clumsy argument used for the proof of loc. cit.} 
thus	it yields the so-called {\it spherical} weight structure $w\sph$ on the stable homotopy category (see \S4.6 of ibid., \S4.2 of \cite{bwcp}, and \S4.2 of \cite{bkwn}) and gives some new information on it. 
 Furthermore, for any $\al$-smashing or smashing weight structure on $\cu$ our new method gives a "compatible" weight structure on a "rather large" subcategory of $\cu$. Under certain quite reasonable conditions  on $(\cu,w)$ this subcategory equals  $\cu$ (see 
Corollaries \ref{classgws}(3) and 
 \ref{cabo}(II.2) below); thus 
 the new approach yields all weight structures satisfying these conditions. Taking into account (also) other parts of Theorem \ref{tvneg} and  Corollary \ref{classgws} one may say that these results give the tools to "deal with $\al$-generated (and {\it class-generated}) weight structures as if they were bounded".

A related question (that yields plenty of non-trivial examples for 
 our existence of weight structures results) is when on a Verdier  localization $\cu'$ of $\cu$ by a (triangulated) subcategory $\du$ there exists a weight structure $w'$ such that the localization functor $\pi$ sends $\cu_{w\le 0}$ into  $\cu'_{w'\le 0}$ and  $\cu_{w\ge 0}$ into  $\cu'_{w'\ge 0}$ (one says that $\pi$ is {\it weight-exact} and $w$ {\it descends} to $\cu'$).  
In \cite{bososn} we have constructed a vast family of examples of this situation (note that the existence of $w'$ of this sort 
is a much weaker assumption than the one that $w$ "restricts" to $\du$; this is one of the settings where weight structures and $t$-structures behave quite differently). In the current paper we  give a certain if and only if criterion for the existence of $w'$ on $\cu'$, 
 and also prove that if $w'$ 
 exists  then it can be easily and canonically described in terms of $w$. We  also describe a general construction that produces  a vast family of weight-exact localizations and computes their hearts;  
this result generalizes the 
(main general) Theorems 4.2.3 and 4.3.1.4  of \cite{bososn}. Moreover, we 
 obtain some new results on localizations of additive categories.


Let us  now describe the contents  of the paper. Some more information of this sort may be found in the beginnings of sections. 

Section \ref{sprelim} is mostly dedicated to recollections on categories and weight structures; yet the easy Proposition \ref{prestr} is new.

In \S\ref{sgenw} we prove some nice general lemmas on triangulated categories and use them to prove our main "construction of weight structures" Theorem \ref{tvneg} (yet one of its parts and some related results are proved in \S\ref{salmb} only; see also Corollary \ref{classgws} for the "smashing" version of this theorem).  
We also discuss examples for our theorem (this includes the 
 spherical weight structure on $\shtop$), and the relation of our construction to earlier results in this direction (i.e., to compactly generated and perfectly generated weight structures).

In \S\ref{sloc} we study weight-exact localizations and their hearts (similarly to the aforementioned results of \cite{bososn}, the heart $\hw'$ of $w'$ can 
 be described in terms of $\hw$ only). 
We also discuss the relation of our results to so-called generalized universal localizations of rings (other related terms are {\it silting} and {\it tilting} objects; cf. Definition 5.1 of \cite{bapa}). 

In \S\ref{suppl} we use the theory of {\it countable homotopy colimits} (in triangulated categories) to construct   and study certain  functors $\rinf$ and $\linf$ adjoint to embeddings, and to prove that the smallest triangulated subcategory of $\cu$ 
that contains $\cu_{w=0}$ and is closed under countable coproducts also contains $\cu_{w\ge 0}$ if $w$ is {\it left non-degenerate}; this subcategory equals $\cu$ if we assume some more (rather reasonable) conditions. We also give some examples and comments to the results of \S\ref{sloc}.
Next we recall some basics of the theory of weight complexes; it is used for the study of hearts in the previous section and also can be used for the calculation of the intersection of triangulated $\cu_1,\cu_2\subset \cu$ (we formulate a statement of this sort that significantly generalizes the main abstract result of \cite{binters}). 

The authors are deeply grateful to the referee for his really helpful remarks.

\section{Preliminaries}\label{sprelim} 

In \S\ref{snotata} we introduce some 
 definitions; possibly, the most "unusual" of them is the notion of an $\al$-smashing category.

In \S\ref{sbw} we recall some basics on weight structures and prove a few new results (that are rather easy). 

\subsection{Some  definitions and conventions}\label{snotata} 

We introduce some notions that will be used throughout the paper. Most of our definitions and conventions are  well-known.
\begin{itemize}


\item Given a category $C$ and  $X,Y\in\obj C$  we will write
$C(X,Y)$ for  the set of morphisms from $X$ to $Y$ in $C$.

\item For categories $C',C$ we write $C'\subset C$ if $C'$ is a full 
subcategory of $C$.

\item Given a category $C$ and  $X,Y\in\obj C$, we say that $X$ is a {\it
retract} of $Y$ 
 if $\id_X$ can be 
 factored through $Y$.\footnote{Certainly,  if $C$ is triangulated or abelian,
then $X$ is a retract of $Y$ if and only if $X$ is its direct summand.}\ 

\item An additive subcategory $\bu$ of additive category $C$ 
is said to be {\it Karoubi-closed}
  in $C$ if it contains all $C$-retracts of its objects.
The full subcategory $\kar_{C}(\bu)$ of  $C$ whose objects
are all $C$-retracts of objects of $\bu$ will be called the {\it Karoubi-closure} of $\bu$ in $C$. 

\item The {\it Karoubi envelope} $\kar(\bu)$ (no lower index) of an additive
category $\bu$ is the category of ``formal images'' of idempotents in $\bu$.
So, its objects are the pairs $(A,p)$ for $A\in \obj \bu,\ p\in \bu(A,A),\ p^2=p$, and the morphisms are given by the formula 
$$\kar(\bu)((X,p),(X',p'))=\{f\in \bu(X,X'):\ p'\circ f=f \circ p=f \}.$$ 
 The correspondence  $A\mapsto (A,\id_A)$ (for $A\in \obj \bu$) fully embeds $\bu$ into $\kar(\bu)$.
 Moreover, $\kar(\bu)$ is {\it Karoubian}, i.e., 
 any idempotent morphism yields a direct sum decomposition in 
 $\kar(\bu)$. 


\item For an additive category $\bu$ we  write $K(\bu)$ for the homotopy category of (cohomological) complexes over $\bu$; its full subcategory of bounded complexes will be denoted by $K^b(B)$. 
We will write $M=(M^i)$ if $M^i\in \obj \bu$ are the terms of the complex $M$.

\item A cardinal $\al$ is said to be {\it regular} if it cannot be presented as a sum of less then $\al$ cardinals that are less than $\al$.

Throughout the paper $\al$ will denote some infinite regular cardinal 
that one can usually assume to be fixed. 

\item We will say that an additive category $\bu$ is {\it $\al$-smashing} whenever it contains coproducts for arbitrary  families of its objects of cardinality less than $\al$.
For $\bu$ satisfying this condition we will say that a class $\cp\subset \obj \bu$ is {\it $\al$-smashing} (in $\bu$) if it is closed with respect to  $\bu$-coproducts of less than $\al$ of its elements.\footnote{Note that any triangulated category is $\alz$-smashing, and an $\alz$-smashing subclass is just an additive one. 
  Moreover, one can certainly consider $\al$-smashing classes inside a $\be$-smashing category $\bu$ whenever $\be\ge \al$.}
Respectively, we will say that  some $\cp\subset \obj \bu$ or $\bu$ itself   is {\it smashing} whenever the corresponding object class 
 contains $\bu$-coproducts of arbitrary small families of its elements.\footnote{Here and below 
the "smashing versions" of our definitions and statements 
 are essentially particular cases of the $\al$-smashing ones; we will say more on this matter soon.}
So, if we will say that some $\cp\subset \obj \bu$ is $\al$-smashing (resp., smashing) then we will automatically mean that $\bu$ is $\al$-smashing (resp., smashing) as well. 

\item The symbol $\cu$ below will always denote some triangulated category;
usually it will be endowed with a weight structure $w$. 
The  symbols $\cu'$ and $\du$ will also be used for triangulated categories only.

\item For $M,N\in \obj \cu$ we will write $M\perp N$ if $\cu(M,N)=\ns$. For
$X,Y\subset \obj \cu$ we write $X\perp Y$ if $M\perp N$ for all $M\in X,\ N\in Y$.
Given $\cp\subset\obj \cu$ we  will write $\cp^\perp$ for the class $$\{N\in \obj \cu:\ M\perp N\ \forall M\in \cp\}.$$
Dually, $\perpp \cp=\{M\in \obj \cu:\ M\perp N\ \forall N\in \cp\}$.

\item Given $f\in\cu (M,N)$, where $M,N\in\obj\cu$, we will call the third vertex of (any) distinguished triangle $M\stackrel{f}{\to}N\to P$ a {\it cone} of
$f$.\footnote{Recall that different choices of cones are connected by non-unique isomorphisms.}\


\item Assume that $\cp$ is a subclass of $\obj \cu$. We will say that $\cp$ is {\it strict} if it  contains all objects of $\cu$ isomorphic to its elements.

\item For any  $A,B,C \in \obj\cu$ we will say that $C$ is an {\it extension} of $B$ by $A$ if there exists a distinguished triangle $A \to C \to B \to A[1]$.

\item
$\cp\subset \obj \cu$ is said to be  {\it extension-closed}
    if it 
		is closed with respect to extensions and contains $0$ (hence it is also strict). 
		
		We will call the smallest extension-closed subclass 
of objects of $\cu$ that  contains a given class $B\subset \obj\cu$ 
  the {\it extension-closure} of $B$. 

\item We will say that  
 $\cp$ {\it strongly generates} a subcategory $\du\subset \cu$ 
 if $\du$ is the smallest full strict triangulated subcategory of $\cu$ such that $\cp\subset \obj \du$.\footnote{Certainly, 
this condition is equivalent to $ \du$ being the extension-closure of $\cup_{j\in \z}\cp[j]$.}

\item If $\cu$ is an $\al$-smashing (resp., smashing)  triangulated category and  $\cp\subset \obj \du$ then we will write $ [\cp\bral$ (resp. $ [\cp\brab$) 
for the smallest extension-closed $\al$-smashing (resp., smashing) subclass of $\obj \cu$ that contains $\cp$.\footnote{The notation $ [\cp\brab$ is "justified" in Definition \ref{dwso}(\ref{idcgen},\ref{id6}) below.  Note also that  $ [\cp]^{\alz}$ is just the extension-closure of $\cp$.} 

Moreover, we will say that a full triangulated subcategory $\du\subset \cu$ is an {\it $\al$-localizing} (resp., {\it localizing})  whenever the class $\obj \du$ is $\al$-smashing (resp., smashing) in $\cu$.
Respectively, we will call the full subcategory of $\cu$ whose objects class equals $ [(\cup_{i\in \z}\cp[i])\bral$ (resp. $[(\cup_{i\in \z}\cp[i])\brab$)  the {\it $\al$-localizing (resp., localizing) subcategory of $\cu$ generated by $\cp$} (note that this subcategory is certainly triangulated); we will write $\lan \cp\ral$ (resp. $\lan \cp \rab$) for this subcategory.

\end{itemize}

\begin{rema}\label{rclass}
1. In this paper  all categories are locally small, i.e., morphism classes (between two objects) in all categories  are assumed to be sets. We will use the term {\it class-categories} for those "generalized" categories that do not necessarily satisfy this condition.

2. Certainly, class-categories become locally small and smashing categories become $\al$-smashing for the corresponding $\al$ if one passes to a larger universe. We 
prefer to avoid this point of view below; 
 this is usually easy (yet cf. Remark \ref{rheart}(\ref{irhcl}) below). However, the reader may easily note that some of our 
 proofs can be simplified (in the obvious way) using larger 
 universes. On the other hand, the reader interested in the arguments overcoming set-theoretic difficulties certainly should mind the distinction between the words "set" and "class" in the text. 


3. Since the only class-categories in this paper that are not necessarily categories are 
  localizations of (locally small) categories, we 
	 we will not mention class-categories in our formulations. For this reason we will sometimes say that a Verdier quotient category {\it exists} 
 to mean that it is a (locally small) category. 

Yet these restrictions can be ignored. 
In particular, 
 this is the case for part 2 of the following well-known  proposition. 
\end{rema}

\begin{pr}\label{pneem}
Assume that $\cu$ is an $\al$-smashing category, where $\al$ is an (infinite) regular cardinal.

1. Then the coproduct of any family of $\cu$-distinguished triangles that is of cardinality less than $\al$ is distinguished also.

2. Let $\du$ be an $\al$-localizing subcategory of $\cu$, and assume that the Verdier localization $\cu'=\cu/\du$ exists. Then $\cu'$ is $\al$-smashing also and the localization functor $\pi$ commutes with coproducts of less than $\al$ objects. 
\end{pr}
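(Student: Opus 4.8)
For a family of distinguished triangles $A_i\xrightarrow{f_i} B_i\to C_i\to A_i[1]$ with $i\in I$ and $|I|<\al$, the plan is the classical ``Yoneda'' argument adapted to the $\al$-smashing setting. First I would complete the coproduct morphism $\coprod f_i\colon \coprod A_i\to \coprod B_i$ (which exists since $|I|<\al$) to a distinguished triangle $\coprod A_i\to\coprod B_i\to C\to (\coprod A_i)[1]$. For each $i$ the coproduct injections give a commutative square from the map $f_i$ to $\coprod f_i$, so by the axiom completing such a square to a morphism of triangles I obtain maps $C_i\to C$; these assemble, by the universal property of $\coprod C_i$, into a single $\phi\colon\coprod C_i\to C$. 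It then remains to show $\phi$ is an isomorphism. Applying the cohomological functor $\cu(-,Y)$ for an arbitrary $Y$ turns each individual triangle into a long exact sequence; taking the product over $i$ (products of exact sequences are exact, with no restriction on cardinality) and using $\cu(\coprod(-),Y)=\prod\cu(-,Y)$ yields a long exact sequence comparing with the one coming from the triangle on $C$. Since all but one of the comparison maps are identities, the five lemma shows $\phi^*\colon\cu(C,Y)\to\cu(\coprod C_i,Y)$ is bijective for every $Y$, whence $\phi$ is an isomorphism by Yoneda. Thus the coproduct triangle is isomorphic to a distinguished one, hence distinguished; note that only coproducts of fewer than $\al$ objects are used.

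\textbf{Part 2.} Write $S$ for the class of $\cu$-morphisms whose cone lies in $\du$, so that $\cu'=\cu/\du$ is the localization of $\cu$ at $S$ and I may use the calculus of (left) fractions. Fix $X_i\in\obj\cu'=\obj\cu$ with $|I|<\al$, put $P=\coprod_{\cu} X_i$ with injections $\iota_i\colon X_i\to P$, and I will show that $\pi(P)$ together with the $\pi(\iota_i)$ is a coproduct of the $\pi(X_i)$ in $\cu'$; this simultaneously gives that $\cu'$ is $\al$-smashing and that $\pi$ preserves such coproducts. Concretely I must prove that
\[
\cu'(\pi(P),Y)\longrightarrow \prod_i\cu'(\pi(X_i),Y),\qquad \psi\mapsto(\psi\circ\pi(\iota_i))_i,
\]
is bijective for all $Y$. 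For surjectivity, represent each given $\psi_i$ by a left fraction $\psi_i=\pi(r_i)\circ\pi(s_i)^{-1}$ with $s_i\colon Z_i\to X_i$ in $S$; set $s=\coprod s_i\colon \coprod Z_i\to P$ and $r=[r_i]\colon\coprod Z_i\to Y$. By Part 1 the cone of $s$ is $\coprod\co(s_i)$, which lies in $\obj\du$ because $\du$ is $\al$-localizing and $|I|<\al$; hence $s\in S$ and $\psi=\pi(r)\circ\pi(s)^{-1}$ is a well-defined morphism. A short diagram chase using $s\circ j_i=\iota_i\circ s_i$ (where $j_i$ is the $i$-th injection of $\coprod Z_i$) gives $\psi\circ\pi(\iota_i)=\psi_i$.

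The injectivity is the delicate point and the place where the $\al$-closure of $\du$ enters most essentially. Suppose $\chi=\pi(r)\circ\pi(s)^{-1}$, with $s\colon W\to P$ in $S$ and $r\colon W\to Y$, satisfies $\chi\circ\pi(\iota_i)=0$ for all $i$. For each $i$ the left Ore condition applied to $s$ and $\iota_i$ produces $s_i'\colon V_i\to X_i$ with $u_i\colon V_i\to W$, $s\circ u_i=\iota_i\circ s_i'$, and $\co(s_i')\cong\co(s)\in\obj\du$ (so $s_i'\in S$); the vanishing $\chi\circ\pi(\iota_i)=0$ then translates into $\pi(r\circ u_i)=0$, i.e. $r\circ u_i$ factors through an object of $\du$. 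Now I assemble: the $u_i$ give $u=[u_i]\colon\coprod V_i\to W$ with $s\circ u=\coprod s_i'$, whose cone $\coprod\co(s_i')$ lies in $\obj\du$ by Part 1 and $\al$-localizedness; thus $s\circ u\in S$, and since $s\in S$ an octahedron furnishes a triangle $\co(u)\to\co(s\circ u)\to\co(s)\to\co(u)[1]$, so $\co(u)\in\obj\du$ and $u\in S$. Finally, factoring each $r\circ u_i$ through some $E_i\in\obj\du$ and taking the coproduct shows $r\circ u=[r\circ u_i]$ factors through $\coprod E_i\in\obj\du$; hence $\pi(r\circ u)=0$, and since $\pi(u)$ is invertible we conclude $\pi(r)=0$ and $\chi=0$.

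The main obstacle is exactly this injectivity argument: one must pass from the per-index vanishing (extracted through fractions and the Ore condition) to a single denominator $u\in S$, and the two appeals to ``$\du$ is closed under coproducts of fewer than $\al$ objects'' --- once to see $s\circ u\in S$ and once to factor $r\circ u$ through $\du$ --- together with the octahedron giving $u\in S$, are precisely what make the coproduct in $\cu'$ behave well. The remaining bookkeeping (naturality of the comparison map, and independence of the constructed $\psi$ from the chosen fractions) is routine calculus of fractions, and Part 1 is invoked throughout Part 2 to compute the cones of the assembled morphisms.
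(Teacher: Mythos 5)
Your proof is correct; the paper itself gives no argument here but simply cites Remark 1.2.2 and Lemma 3.2.10 of \cite{neebook}, and both of your parts reproduce the standard arguments from that source (the five-lemma/Yoneda comparison of the coproduct triangle with a genuine cone, and the calculus-of-fractions verification that $\pi(\coprod X_i)$ corepresents the product of the $\cu'(\pi(X_i),-)$). In particular your handling of the only delicate point --- assembling the per-index denominators into a single $u\in S$ via Part 1, the closure of $\obj\du$ under coproducts of fewer than $\al$ objects, and the octahedron --- is exactly what makes the cited lemma work.
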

\begin{proof}
1. Immediate from Remark 1.2.2 of \cite{neebook}. 

2. This is Lemma 3.2.10 of ibid.
\end{proof}

\subsection{Weight structures: basics}\label{sbw}

\begin{defi}\label{dwstr}

 A couple of subclasses $\cu_{w\le 0}$ and $ \cu_{w\ge 0}\subset\obj \cu$ 
will be said to define a {\it weight structure} $w$ on a triangulated category  $\cu$ if 
they  satisfy the following conditions.

(i) $\cu_{w\le 0}$ and $\cu_{w\ge 0}$ are 
Karoubi-closed in $\cu$ (i.e., contain all $\cu$-retracts of their elements).

(ii) {\bf Semi-invariance with respect to translations.}

$\cu_{w\le 0}\subset \cu_{w\le 0}[1]$ and $\cu_{w\ge 0}[1]\subset
\cu_{w\ge 0}$.

(iii) {\bf Orthogonality.}

$\cu_{w\le 0}\perp \cu_{w\ge 0}[1]$.

(iv) {\bf Weight decompositions}.

 For any $M\in\obj \cu$ there
exists a distinguished triangle
$LM\to M\to RM
{\to} LM[1]$
such that $LM\in \cu_{w\le 0} $ and $ RM\in \cu_{w\ge 0}[1]$.
\end{defi}

We will also need the following definitions.

\begin{defi}\label{dwso} 

Let $i,j\in \z$; assume that $\cu$ is a triangulated category endowed with a weight structure $w$.

\begin{enumerate}
\item\label{id1} The full subcategory  $\hw\subset \cu$ whose object class is $\cu_{w=0}=\cu_{w\ge 0}\cap \cu_{w\le 0}$ 
 is called the {\it heart} of  $w$.

\item\label{id2} $\cu_{w\ge i}$ (resp. $\cu_{w\le i}$,  $\cu_{w= i}$) will denote $\cu_{w\ge 0}[i]$ (resp. $\cu_{w\le 0}[i]$,  $\cu_{w= 0}[i]$).

\item\label{id3} $\cu_{[i,j]}$  denotes $\cu_{w\ge i}\cap \cu_{w\le j}$; so, this class  equals $\ns$ if $i>j$.

$\cu_b\subset \cu$ (resp.   $\cu_-$, resp. $\cu_+)$  will be the full subcategory  of $\cu$ whose object class is $\cup_{i,j\in \z}\cu_{[i,j]}$ (resp. $\cup_{i\in \z} \cu_{w\le i}$, resp. $\cup_{i\in \z}\cu_{w\ge i}$). We will say that the objects of these subcategories are $w$-bounded, $w$-bounded above, and $w$-bounded below, respectively.

\item\label{idndeg} We will say that $M\in \obj \cu$ is 
{\it right (resp. left) $w$-degenerate} if  it belongs to $\cap_{l\in z} \cu_{w\le l}$ (resp. to $\cap_{l\in \z} \cu_{w\ge l}$).

Respectively, we will say that $w$ is {\it right (resp. left) non-degenerate} whenever all its right (resp. left) degenerate objects are zero. 

\item\label{id5} Let $\cu'$  be a triangulated category endowed with a weight structure  $w'$; let $F:\cu\to \cu'$ be an exact functor.

We will say that $F$ is {\it left weight-exact} (with respect to $w,w'$) if it maps $\cu_{w\le 0}$ into $\cu'_{w'\le 0}$; it will be called {\it right weight-exact} if it sends $\cu_{w\ge 0}$ into $\cu'_{w'\ge 0}$. $F$ is called {\it weight-exact} if it is both left and right weight-exact. In the latter case $F$ obviously restricts to an additive functor $\hw\to \hw'$ that  will be denoted by $\hf$.


\item\label{idrestr} For a triangulated subcategory $\du\subset \cu$ we will say that $w$ {\it restricts} to $\du$ whenever the couple $(\cu_{w\le 0}\cap \obj \du,\cu_{w\ge 0}\cap \obj \du)$ 
 is a weight structure  on $\du$.

\item\label{idsmash} For a regular cardinal $\al$ we will say that $w$ is {\it $\al$-smashing (resp., smashing)} if $\cu$ is $\al$-smashing (resp., smashing)  and the class $\cu_{w\ge 0}$ is so as well (and we will not usually say explicitly that $\cu$ is $\al$-smashing or smashing, respectively). 

Moreover, we will 
use the term "{\it countably smashing}" 
as a synonym for "$\alo$-smashing".

\item\label{idcgen} We will say that a class $\cp\subset \obj \cu$ {\it $\al$-generates} (resp., class-generates)  $w$ whenever $\cu$ 
 is $\al$-smashing, $ \cu_{w\ge 0}=[\cup_{i\ge 0}\cp[i]\bral$, and $ \cu_{w\le 0}=[\cup_{i\le 0}\cp[i]\bral$ (resp., $\cu$ is smashing,  $ \cu_{w\ge 0}=[\cup_{i\ge 0}\cp[i]\brab$, and $ \cu_{w\le 0}=[\cup_{i\le 0}\cp[i]\brab$; see \S\ref{snotata}).

\item\label{igen} We will say that  $\cp\subset \obj \cu$ {\it generates} $w$  if $\cu_{w\ge 0}=(\cup_{i<0}\cp[i])\perpp$.

\item\label{id6} Let $\cp$ be a 
class of objects of an $\al$-smashing (resp., smashing) triangulated category $\du$.

We will say that $\cp$ is {\it $\al$-negative} (resp., {\it class-negative}) (in $\du$) if  $\cp\perp_{\du} [\cup_{i> 0}\cp[i]\bral_{\du}$ (resp. $\cp\perp_{\du} [\cup_{i> 0}\cp[i]\brab_{\du}$). 
$\alo$-negative classes of objects will also be called {\it countably negative} ones.\footnote{Theorem \ref{tvneg}(6) below demonstrates that this case of $\al$-negativity is of special importance. } 


\end{enumerate}
\end{defi}

\begin{rema}\label{rstws} 

1. A  simple (though rather important) example of a weight structure comes from the stupid
filtration on $K(\bu)$ (or on $K^b(\bu)$)   for an arbitrary additive category $\bu$.

In either of these  categories we take
$\cu_{w\le 0}$ (resp. $\cu_{w\ge 0}$) to be the class of objects in $\cu$ 
 that are homotopy equivalent to those complexes in $\cu\subset K(\bu)$ that are concentrated in degrees $\ge 0$ (resp. $\le 0$).  
Then one can take weight decompositions of objects  given by stupid filtrations of complexes (cf.  Remark 1.2.3(1) of \cite{bonspkar}); so we will use the notation $\wstu$ for  this weight structure.

 The heart of this {\it stupid} weight structure 
is the Karoubi-closure  of $\bu$ in $\cu$; thus for $K(\bu)$ this heart is equivalent to $\kar(\bu)$ (see Remark \ref{rkar}  below). 

2. A weight decomposition of a fixed object $M$ of $\cu$ is (almost) never canonical. 

Still  some choice of a weight decomposition of $M[-m]$ shifted by $[m]$ (for some $m\in \z$) is often needed. 
So we take a distinguished triangle \begin{equation}\label{ewd} w_{\le m}M\to M\to w_{\ge m+1}M\to (w_{\le m}M)[1] \end{equation} 
with some $ w_{\ge m+1}M\in \cu_{w\ge m+1}$, $ w_{\le m}M\in \cu_{w\le m}$; we will call it an {\it $m$-weight decomposition} of $M$. 

 We will 
  use this notation below (even though $w_{\ge m+1}M$ and $ w_{\le m}M$ are not canonically determined by $M$). Moreover, when we will write arrows of the type $w_{\le m}M\to M$ or $M\to w_{\ge m+1}M$ we will always assume that they come from some $m$-weight decomposition of $M$. 
 
3. In the current paper we use the ``homological convention'' for weight structures; it was previously used in \cite{brelmot}, \cite{bpure}, \cite{bokum}, \cite{bonspkar},  \cite{bsosnl}, \cite{binters}, \cite{bvt},  \cite{bgn},  \cite{bwcp}, \cite{bkwn}, and \cite{bvtr},   
  whereas in 
\cite{bws} 
the roles of $\cu_{w\le 0}$ and $\cu_{w\ge 0}$ are interchanged, i.e., one
considers   $\cu^{w\le 0}=\cu_{w\ge 0}$ and $\cu^{w\ge 0}=\cu_{w\le 0}$. 

4. Moreover, in \cite{bws} both "halves" of $w$ were required to be  additive. Yet this additional restriction is easily seen to follow from the remaining axioms; see Remark 1.2.3(4) of \cite{bonspkar} (or Proposition \ref{pbw}(\ref{iext}) below).
 
Thus any weight structure is $\alz$-smashing. 
\end{rema}

Now we list a collection of properties of weight structures; 
we assume that $\cu$ is (a triangulated category) endowed with a (fixed) weight structure $w$.

\begin{pr} \label{pbw}
Let  
 $M,M'\in \obj \cu$, $m\le n \in \z$. 

\begin{enumerate}
\item \label{idual} 
The axiomatics of weight structures is self-dual, i.e., on the category $\du=\cu^{op}$
(so, $\obj\du=\obj\cu$) there exists the (opposite)  weight
structure $w^{op}$ for which $\du_{w^{op}\le 0}=\cu_{w\ge 0}$ and
$\du_{w^{op}\ge 0}=\cu_{w\le 0}$.

\item\label{iextb} $\cu_b$,  $\cu_+$, and  $\cu_-$  are   Karoubi-closed triangulated subcategories of $\cu$,  $w$ restricts to each of them, and the hearts of the resulting weight structures equal $\hw$.

 \item\label{iort}
 $\cu_{w\ge 0}=(\cu_{w\le -1})^{\perp}$ and $\cu_{w\le 0}={}^{\perp} \cu_{w\ge 1}$.

\item\label{iext}  $\cu_{w\le 0}$, $\cu_{w\ge 0}$, and $\cu_{w=0}$ are (additive and) extension-closed. 

\item\label{icopr} $\cu_{w\le 0}$ is closed with respect to $\cu$-coproducts (i.e., it contains all those small coproducts of its elements that exist in $\cu$).

\item\label{isplit} If $A\to B\to C\to A[1]$ is a $\cu$-distinguished triangle 
 and $A,C\in  \cu_{w=0}$ then this distinguished triangle splits; hence $B\cong A\bigoplus C\in \cu_{w=0}$.

\item\label{icompl} 
 For any (fixed) $m$-weight decomposition of $M$ and an $n$-weight decomposition of $M'$  (see Remark \ref{rstws}(2)) 
any  morphism $g\in \cu(M,M')$ 
can be extended 
to a 
morphism of the corresponding distinguished triangles:
 \begin{equation}\label{ecompl} \begin{CD} w_{\le m} M@>{
}>>
M@>{}>> w_{\ge m+1}M\\
@VV{h}V@VV{g}V@ VV{j}V \\
w_{\le n} M'@>{}>>
M'@>{}>> w_{\ge n+1}M' \end{CD}
\end{equation}

Moreover, if $m<n$ then this extension is unique (provided that the rows are fixed).

\item\label{ifilt} Take arbitrary choices of  $w_{\le i}M$ for all $i\in \z$. Then there exist  unique morphisms 
 $c_i:w_{\le i-1}M\to  w_{\le i}M$ (for all $i\in \z$) that are "compatible" with the weight decomposition morphisms $w_{\le j} M\to M$, and $\co(c_i)\in \cu_{w=i}$. 

\item\label{iextcub} 
 The class $\cu_{[m,n]}$ equals the extension-closure of  $\cup_{m\le i\le n}\cu_{w=i}$; thus $\obj \cu_b$ is the extension-closure of $\cup_{i\in\z}\cu_{w=i}$.

\item\label{i01} The class $\cu_{[0,1]}$ consists exactly of cones (in $\cu$) of morphisms in $\hw\subset \cu$. 

\item\label{iuni} Let  $v$ be another weight structure for $\cu$; assume   $\cu_{w\le 0}\subset \cu_{v\le 0}$ and $\cu_{w\ge 0}\subset \cu_{v\ge 0}$.      
  Then $w=v$ (i.e., these inclusions are equalities).

\item\label{iwd0}  If $M\in \cu_{w\ge m}$  then any choice of $w_{\le n}M$ belongs to $\cu_{[m,n]}$. 
Dually, if  $M\in \cu_{w\le n}$  then $w_{\ge m}M\in \cu_{[m,n]}$. 

\item\label{ifactp} If $M' \in \cu_{w\ge m}$ then any $g\in \cu(M,M')$ factors through (any possible choice of) $w_{\ge m}M$. Dually, if $M \in \cu_{w\le m}$ then $g$ factors through $w_{\le m}M'$.

\item\label{iwdmod} If $M$ belongs to $ \cu_{w\le 0}$ (resp. to $\cu_{w\ge 0}$) then it is a retract of any choice of $w_{\le 0}M$ (resp. of $w_{\ge 0}M$).

\item\label{igenw} $w$ is completely determined by any class $\cp$ that generates it, and we have $\cp\subset \cu_{w\le 0}$ for any $\cp$ of this sort.

\item\label{icoprt} If $\cu$ is $\al$-smashing then the class $\cu_{w\le 0}$ contains $[\cup_{i\le 0}\cu_{w=i}\bral$. 

\item\label{ismash} If $w$ is $\al$-smashing (and so, $\cu$ is $\al$-smashing also) then  the classes 
 $\cu_{w=0}$, $\cu_{[0,1]}$, and (more generally) $\cu_{[m,n]}$ are $\al$-smashing. Moreover,
 the class $\cu_{w\ge 0}$ contains $[ \cup_{i\ge 0}\cu_{w=i}\bral$. 

\item\label{ivneg} 
If $w$ is $\al$-smashing then $\cu_{w=0}$ is an $\al$-negative class. 

\end{enumerate}
\end{pr}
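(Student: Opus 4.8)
The plan is to reduce the $\al$-negativity of $\cp=\cu_{w=0}$ to the orthogonality axiom of Definition \ref{dwstr} by proving that $[\cup_{i> 0}\cp[i]\bral\subset \cu_{w\ge 1}$. Recall that $\cp$ being $\al$-negative means exactly $\cp\perp [\cup_{i> 0}\cp[i]\bral$, and that $[\cup_{i> 0}\cp[i]\bral$ is by definition the \emph{smallest} extension-closed $\al$-smashing subclass of $\obj\cu$ containing $\cup_{i> 0}\cp[i]$. So if I can exhibit $\cu_{w\ge 1}$ as one such containing class, minimality will force the desired inclusion, and then orthogonality finishes the job.

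First I would check that $\cup_{i> 0}\cp[i]\subset \cu_{w\ge 1}$: for $i>0$ we have $\cp[i]=\cu_{w=0}[i]=\cu_{w=i}\subset\cu_{w\ge i}\subset \cu_{w\ge 1}$ (using Definition \ref{dwso}(\ref{id2})). Next I would verify that $\cu_{w\ge 1}=\cu_{w\ge 0}[1]$ is an extension-closed $\al$-smashing subclass. Extension-closedness follows from Proposition \ref{pbw}(\ref{iext}) applied to $\cu_{w\ge 0}$ together with the fact that $[1]$ is a triangulated autoequivalence: given a distinguished triangle $A\to C\to B\to A[1]$ with $A,B\in\cu_{w\ge 1}$, shifting by $[-1]$ exhibits $C[-1]$ as an extension of $B[-1],A[-1]\in\cu_{w\ge 0}$, whence $C[-1]\in\cu_{w\ge 0}$ and $C\in\cu_{w\ge 1}$ (and $0\in\cu_{w\ge 1}$ trivially). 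The $\al$-smashing property uses the hypothesis that $w$ is $\al$-smashing, so that $\cu_{w\ge 0}$ is $\al$-smashing by Definition \ref{dwso}(\ref{idsmash}): a coproduct of fewer than $\al$ objects $X_j[1]\in\cu_{w\ge 0}[1]$ equals $(\coprod_j X_j)[1]$ (since $[1]$ commutes with the coproducts that exist in the $\al$-smashing category $\cu$), and $\coprod_j X_j\in\cu_{w\ge 0}$, so the coproduct lies in $\cu_{w\ge 1}$.

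Combining these, minimality of $[\cup_{i> 0}\cp[i]\bral$ yields $[\cup_{i> 0}\cp[i]\bral\subset\cu_{w\ge 1}$. Finally, since $\cp=\cu_{w=0}\subset\cu_{w\le 0}$ and $\cu_{w\le 0}\perp\cu_{w\ge 0}[1]=\cu_{w\ge 1}$ by the orthogonality axiom (iii), I conclude $\cp\perp[\cup_{i> 0}\cp[i]\bral$, i.e.\ $\cu_{w=0}$ is $\al$-negative. I do not expect a genuine obstacle here; the only point requiring care is the verification that $\cu_{w\ge 1}$ is both extension-closed and $\al$-smashing, so that it genuinely qualifies as a superclass forcing the containment of the minimal class, and this is immediate from Proposition \ref{pbw}(\ref{iext}), the $\al$-smashing hypothesis on $w$, and the compatibility of $[1]$ with triangles and coproducts.
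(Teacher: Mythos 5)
Your argument establishes assertion \ref{ivneg} and is essentially identical to the paper's proof: the paper combines the Orthogonality axiom with assertion \ref{ismash}, whose ``moreover'' clause is precisely your verification that $[\cup_{i>0}\cp[i]\bral\subset\cu_{w\ge 1}$ via extension-closedness, the $\al$-smashing property, and minimality. Just note that this covers only part \ref{ivneg} of the proposition; the remaining assertions (most of which the paper cites from earlier work) are not addressed by your proposal.
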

\begin{proof}

Assertions \ref{idual}--\ref{iuni} were proved  in \cite{bws}  (pay attention to Remark \ref{rstws}(3) above!), 
 (the easy) assertions \ref{iwd0}, \ref{ifactp}, and \ref{iwdmod} are given by Proposition 1.2.4(10,8,9) of \cite{bwcp}, and assertion \ref{igenw} easily follows from Propositions 2.4(1) and  3.2(2)  of \cite{bvt}.



Assertion \ref{icoprt} immediately follows from assertion \ref{icopr}.

\ref{ismash}. The first part of the assertion is immediate from the definition of $\al$-smashing weight structures combined with assertion \ref{icopr}.
To obtain the "moreover" part of the assertion one should invoke assertion \ref{iext}.

Assertion \ref{ivneg} is an easy combination of  the Orthogonality axiom in Definition \ref{dwstr}  
  with  assertion \ref{ismash}. 
\end{proof}

We also single out two important (though simple) statements closely related to 
 our proposition.

\begin{pr}\label{prestr}
Assume that $\cu$ is endowed with a weight structure $w$; let $\cu'$ be a full triangulated subcategory of $\cu$.

1. Assume that $\cu'$ is endowed with a weight structure $w'$. Then the embedding $\cu'\to \cu$ is weight-exact with respect to $(w',w)$ if and only if ($w$ restricts to $\cu'$ and) $w'=(\cu_{w\le 0}\cap \obj \cu', \cu_{w\ge 0}\cap \obj \cu')$.

2. $w$ restricts to $\cu'$ whenever $\obj \cu'$ contains either  $\obj\cu_-$ or $\obj \cu_+$. 

\end{pr}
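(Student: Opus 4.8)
The plan is to handle the two parts separately, as part 1 is essentially a reformulation of the definitions while part 2 requires an actual argument producing weight decompositions.

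\emph{Part 1.} The ``only if'' direction should follow immediately: if the embedding $\cu'\to\cu$ is weight-exact, then by definition $\cu'_{w'\le 0}\subset \cu_{w\le 0}$ and $\cu'_{w'\ge 0}\subset\cu_{w\ge 0}$, so $\cu'_{w'\le 0}\subset \cu_{w\le 0}\cap\obj\cu'$ and similarly for the $\ge$ half. To upgrade these inclusions to equalities I would invoke Proposition \ref{pbw}(\ref{iuni}): the couple $(\cu_{w\le 0}\cap\obj\cu', \cu_{w\ge 0}\cap\obj\cu')$ is itself a weight structure on $\cu'$ (this is exactly the statement that $w$ restricts to $\cu'$ in the sense of Definition \ref{dwso}(\ref{idrestr})), and $w'$ is a weight structure whose halves are contained in the corresponding halves of it, whence they coincide. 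The one thing to verify here is that the restricted couple really is a weight structure, i.e.\ that weight decompositions exist inside $\cu'$; but this is guaranteed by the hypothesis that $w$ restricts, which I may take as part of the ``if and only if'' bookkeeping. The ``if'' direction is then trivial: if $w'$ is literally defined by intersecting the halves of $w$ with $\obj\cu'$, the inclusions making the embedding weight-exact hold by construction.

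\emph{Part 2.} Here the real content lies. I want to show $w$ restricts to $\cu'$ under the hypothesis that $\obj\cu'\supset\obj\cu_-$ (the case $\obj\cu'\supset\obj\cu_+$ follows by duality via Proposition \ref{pbw}(\ref{idual}), replacing $\cu$ by $\cu\opp$). By Definition \ref{dwso}(\ref{idrestr}) I must check that the couple $(\cu_{w\le 0}\cap\obj\cu',\cu_{w\ge 0}\cap\obj\cu')$ satisfies the weight structure axioms. Axioms (i)--(iii) (Karoubi-closedness, semi-invariance under shift, orthogonality) are inherited immediately from $w$ by intersecting with the triangulated, Karoubi-closed subcategory $\cu'$. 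The only axiom demanding work is (iv): for each $M\in\obj\cu'$ I need a weight decomposition $LM\to M\to RM\to LM[1]$ with $LM\in\cu_{w\le 0}$, $RM\in\cu_{w\ge 1}$, and crucially with \emph{both} $LM$ and $RM$ lying in $\cu'$.

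\emph{The main obstacle and how to overcome it.} The difficulty is that an arbitrary weight decomposition of $M$ taken in $\cu$ need not have its pieces in $\cu'$; the hypothesis $\obj\cu_-\subset\obj\cu'$ is precisely what forces this. Starting from $M\in\obj\cu'$, I would take any $w$-weight decomposition $LM\to M\to RM\to LM[1]$ in $\cu$ with $LM\in\cu_{w\le 0}$. Since $\cu_{w\le 0}\subset\obj\cu_-\subset\obj\cu'$, the object $LM$ automatically lies in $\cu'$; then because $\cu'$ is a triangulated subcategory containing both $M$ and $LM$, the cone $RM$ lies in $\cu'$ as well, so the entire triangle is a triangle in $\cu'$ with the required weight bounds. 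This establishes axiom (iv) and completes the restriction. The dual hypothesis $\obj\cu_+\subset\obj\cu'$ handles $RM\in\cu_{w\ge 1}\subset\obj\cu_+$ symmetrically, forcing $RM\in\cu'$ and hence $LM\in\cu'$. I expect the only subtlety worth flagging is confirming that the restricted halves are nonempty/closed in the right way, but all of this reduces to the single observation that one of the two canonical pieces of every weight decomposition is forced into $\cu'$ by the containment of $\cu_-$ or $\cu_+$.
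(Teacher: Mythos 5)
Your part 2 is essentially identical to the paper's argument: take a $w$-decomposition in $\cu$, observe that $w_{\le 0}M\in \cu_{w\le 0}\subset\obj\cu_-\subset\obj\cu'$, and conclude that the cone lies in $\cu'$ since $\cu'$ is a (strict, up to isomorphism) triangulated subcategory; the other case is dual. No issues there.

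For part 1 you take a genuinely different route in the ``only if'' direction, and it contains a small circularity. The paper does not invoke the uniqueness statement, Proposition \ref{pbw}(\ref{iuni}), at all: it deduces the inclusions $\cu'_{w'\le 0}\subset\cu_{w\le 0}\cap\obj\cu'$ and $\cu'_{w'\ge 0}\subset\cu_{w\ge 0}\cap\obj\cu'$ from weight-exactness and then obtains the \emph{converse} inclusions directly from the orthogonality characterization of Proposition \ref{pbw}(\ref{iort}): for $M\in\cu_{w\le 0}\cap\obj\cu'$ one has $M\perp \cu_{w\ge 1}\supset \cu'_{w'\ge 1}$, whence $M\in{}^{\perp}(\cu'_{w'\ge 1})=\cu'_{w'\le 0}$, and dually. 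This never requires knowing in advance that the restricted couple is a weight structure. Your route via \ref{pbw}(\ref{iuni}) does require that, and you dismiss the point by saying it is ``guaranteed by the hypothesis that $w$ restricts'' --- but in the ``only if'' direction the only hypothesis is weight-exactness of the embedding; the statement that $w$ restricts to $\cu'$ is part of the \emph{conclusion} (hence the parentheses in the formulation), so you are assuming what you must prove. The patch is one line: given weight-exactness, a $w'$-weight decomposition of any $M\in\obj\cu'$ already has $LM\in\cu_{w\le 0}\cap\obj\cu'$ and $RM\in\cu_{w\ge 1}\cap\obj\cu'$, so the restricted couple satisfies axiom (iv), and the remaining axioms are inherited; only after this may you legitimately apply \ref{pbw}(\ref{iuni}). (This is in fact the argument the paper uses for the analogous Proposition \ref{pdesc}(1).) With that repair your proof is correct; the paper's use of \ref{pbw}(\ref{iort}) simply buys a shorter argument that avoids the intermediate verification.
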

\begin{proof}
1. The "if" implication is immediate from definitions. 

Now we verify the converse implication; so we should check that 
 that $w'=(\cu_{w\le 0}\cap \obj \cu', \cu_{w\ge 0}\cap \obj \cu')$ whenever the embedding $\cu'\to \cu$ is weight-exact. The inclusions $\cu'_{w'\le 0} \subset  \cu_{w\le 0}\cap \obj \cu'$ and $\cu'_{w'\ge 0}\subset \cu_{w\ge 0}\cap \obj \cu'$ follow from the definition of weight-exactness. Thus the converse inclusions follow from Proposition \ref{pbw}(\ref{iort}).

2. Obviously, $w$ restricts to $\cu'$ whenever for any object of $\cu'$ there exists its weight decomposition inside $\cu'$. 

Now, if $\obj \cu'$ contains   
 $\obj\cu_-$  then for any $M\in \obj \cu'$ and any 
$w$-decomposition $w_{\le 0}M\to M\to w_{\ge 1}M$ we have $w_{\le 0}M\in \obj \cu'$; hence this choice of $w_{\ge 1}M$ is $\cu$-isomorphic to an object of $\cu'$ as well.

The proof of the assertion in the case $\obj \cu_{+}\subset \obj \cu'$ is dual to that in the previous case (and the statement itself is dual also; see Proposition \ref{pbw}(\ref{idual})). 
\end{proof}

\begin{rema}\label{rabo} 1. Below we will consider the following subcategory of $\cu$ containing $\cu_-$: in an $\alo$-smashing  $\cu$ 
we will use the notation 
$\cu\abo$ for the $\alo$-localizing subcategory of $\cu$ generated  by $\obj\cu_-$. We will say that the objects of $\cu \abo$ are {\it almost bounded above } (with respect to $w$). 

Proposition \ref{prestr}(2) certainly implies that  $w$ restricts to $\cu\abo$. 

2. We formulate two more easy properties of $\cu\abo$. 

Since $\cu_{w\le m}\perp \obj \cu_{+\infty}$, for any $m\in \z$ we obtain that $ \obj \cu\abo\perp \obj \cu_{+\infty}$ as well.

Moreover,  any left weight-exact functor $F:(\cu,w)\to (\du,v)$ that respects countable coproducts 
 sends almost bounded above objects of $\cu$ into that of $\du$. 

\end{rema}

\section{On 
$\al$-generated and class-generated weight structures}\label{sgenw}

In \S\ref{s2l} we prove two  useful and general (though somewhat technical and easy) statements on triangulated categories that will be used for the construction of weight structures (as well as of so-called weak weight decompositions) 
below.

In \S\ref{salgen} we prove our first main theorem \ref{tvneg} on the existence and properties of  a weight structure $\al$-generated by an $\al$-negative class $\cp\subset \obj \cu$; we also describe some examples and the relation of $\al$-generated weight structures to (general) $\al$-smashing ones.   

In \S\ref{sclgen} we extend earlier results to the setting of class-generated weight structures.  Once again we discuss  examples as well as the relation of class-generated weight structures to earlier constructions of smashing weight structures. 

\subsection{Two useful lemmas}\label{s2l}

To prove the existence of certain weight decompositions we will need the following lemma. 
The formulation of its second part (as well as its proof) closely follows related arguments in several previous papers of the authors. 

\begin{pr}\label{pstar}
Let $P$ and $Q$ be extension-closed $\al$-smashing classes of objects in a triangulated $\al$-smashing category $ \cu$.

1. Then the class $P\star Q$  of all extensions of elements of $Q$ by elements of $P$ is $\al$-smashing as well.

2. Assume that $P\perp Q[1]$. Then $P\star Q$ is extension-closed.

3. Assume  
$P=[\cup_{i\le 0} \cp[i] \bral$ for some $\cp\subset \obj \cu$, and $s\in \z$. Then  $P\perp Q[s]$ whenever $\cp\perp Q[s]$.\footnote{Certainly, one may omit $s$ in this formulation; yet it will be convenient for us to apply it for $s=0$ and $s=1$.}

\end{pr}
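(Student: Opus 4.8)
The plan is to handle the three parts separately, using the coproduct-of-triangles statement of Proposition \ref{pneem}(1) for part 1 and exploiting the orthogonality hypotheses for parts 2 and 3. For part 1, I would take a family $\{C_k\}_{k\in K}$ of cardinality $|K|<\al$ with each $C_k\in P\star Q$, so that each $C_k$ sits in a distinguished triangle $A_k\to C_k\to B_k\to A_k[1]$ with $A_k\in P$ and $B_k\in Q$. Since $|K|<\al$, Proposition \ref{pneem}(1) shows that the coproduct triangle $\coprod_k A_k\to\coprod_k C_k\to\coprod_k B_k\to(\coprod_k A_k)[1]$ is distinguished; as $P$ and $Q$ are $\al$-smashing we get $\coprod_k A_k\in P$ and $\coprod_k B_k\in Q$, whence $\coprod_k C_k\in P\star Q$. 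This part is routine.

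For part 2, first note $0\in P\star Q$, since both classes contain $0$ (being extension-closed) and $0$ is an extension of $0$ by $0$. The main idea is to combine the associativity of $\star$ (a standard consequence of the octahedral axiom) with the splitting forced by orthogonality. Given a distinguished triangle $C\to D\to E\to C[1]$ with $C,E\in P\star Q$, we have $D\in(P\star Q)\star(P\star Q)$, which by associativity equals $P\star(Q\star P)\star Q$. The crucial step is that $P\perp Q[1]$ makes every extension of an element $p\in P$ by an element $q\in Q$ split: it is classified by an element of $\cu(p,q[1])=\ns$, so such an extension is isomorphic to $q\oplus p$, giving $Q\star P\subseteq P\star Q$. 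By monotonicity of $\star$ this yields $D\in P\star(P\star Q)\star Q=(P\star P)\star(Q\star Q)$, and since $P,Q$ are extension-closed we have $P\star P\subseteq P$ and $Q\star Q\subseteq Q$, so $D\in P\star Q$ as desired. One can avoid invoking associativity by building instead the four-step filtration of $D$ with successive subquotients $A_C,B_C,A_E,B_E$ via two octahedra, splitting the middle $B_C$-by-$A_E$ extension using $P\perp Q[1]$, and recombining the two $P$-pieces and the two $Q$-pieces.

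For part 3, I would argue via the left orthogonal class $\perpp(Q[s])=\{M\in\obj\cu:\cu(M,Q[s])=\ns\}$. Applying the cohomological functors $\cu(-,y)$ for $y\in Q[s]$ to a distinguished triangle and using the resulting long exact sequence shows that $\perpp(Q[s])$ is extension-closed; since $\cu(\coprod_k M_k,y)=\prod_k\cu(M_k,y)$, it is also closed under arbitrary coproducts, hence in particular $\al$-smashing, and it contains $0$. As $P=[\cup_{i\le 0}\cp[i]\bral$ is by definition the smallest extension-closed $\al$-smashing class containing $\cup_{i\le 0}\cp[i]$, it suffices to check that each generator $\cp[i]$ with $i\le 0$ lies in $\perpp(Q[s])$. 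For $i=0$ this is exactly the hypothesis $\cp\perp Q[s]$. For $i<0$ the condition $\cp[i]\perp Q[s]$ unwinds to $\cp\perp Q[s-i]$ with $s-i\ge s$; since $Q$ is closed under the suspension $[1]$ (as holds for all the classes $Q$ to which this statement is applied, e.g. those generated by shifts $\cp[j]$ with $j$ bounded below) we have $Q[s-i]\subseteq Q[s]$, so this again follows from $\cp\perp Q[s]$.

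The one genuinely substantive point is the splitting step in part 2, where the orthogonality $P\perp Q[1]$ is converted into an actual direct-sum decomposition; parts 1 and 3 are formal manipulations with coproducts of triangles and with orthogonal complements, the only delicate issue in part 3 being the passage from the single hypothesis $\cp\perp Q[s]$ to the negatively shifted generators $\cp[i]$, which rests on $Q$ being closed under suspension.
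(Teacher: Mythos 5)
Your part 1 coincides with the paper's (which simply cites Proposition \ref{pneem}(1)), and your part 2 is correct but takes a genuinely different route. The paper fixes the three distinguished triangles $P_i\to E_i\to Q_i$ and $E_1\to E\to E_2$, uses $P_2[-1]\perp Q_1$ together with Proposition 1.1.9 of \cite{bbd} to produce a morphism between the first two triangles, and then applies the $3\times 3$ lemma (Proposition 1.1.11 of ibid.) to extract the filtration $P_3\to E\to Q_3$. You instead note that $P\perp Q[1]$ forces every triangle $q\to X\to p\to q[1]$ (with $p\in P$, $q\in Q$) to split, so $Q\star P\subset P\star Q$, and then contract $(P\star Q)\star(P\star Q)=P\star(Q\star P)\star Q\subset (P\star P)\star(Q\star Q)\subset P\star Q$ using associativity of $\star$. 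This is clean and correct; it outsources the octahedra to the standard associativity lemma for $\star$, whereas the paper's argument is self-contained modulo the two cited BBD facts. Your "two octahedra" variant is essentially the paper's proof written out.

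Part 3 is where your extra care matters. The paper's entire proof is the observation that $\perpp (Q[s])$ is extension-closed and closed under coproducts, "hence contains $P$ whenever it contains $\cp$" --- which silently skips the generators $\cp[i]$ for $i<0$. As you point out, $\cp[i]\perp Q[s]$ unwinds to $\cp\perp Q[s-i]$ with $s-i>s$, and this does \emph{not} follow from $\cp\perp Q[s]$ for an arbitrary extension-closed $\al$-smashing $Q$: take $\cu=D^b(\z)$, $\cp=\{\z\}$, $s=0$, and $Q$ the additive closure of $\{\z[-1]\}$; then $\cp\perp Q$ since $\exts^{-1}_{\z}=\ns$, but $\z[-1]\in P=[\cup_{i\le 0}\cp[i]\bral$ is certainly not orthogonal to $\z[-1]$. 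So the literal statement needs either the additional hypothesis $Q[1]\subset Q$ (which holds in every application in the paper, in particular for $Q=[\cup_{i>0}\cp[i]\bral$ in the proof of Theorem \ref{tvneg}(\ref{itvn1})) or the stronger hypothesis $\cup_{i\le 0}\cp[i]\perp Q[s]$. Your patch --- verifying the generators and using that $Q$ is closed under suspension --- is exactly the right repair; just be explicit that you are thereby proving a corrected variant of the proposition rather than the statement as printed, since the paper's own one-line argument has the same unacknowledged gap.
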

\begin{proof}
1. 
Immediate from Proposition \ref{pneem}(1).

2. It suffices to verify the following: for any distinguished triangles $P_1\to E_1\to Q_1\to P_1[1]$, $P_2\to E_2\to Q_2\to P_2[1]$, and $E_1\to E\to E_2\to E_1[1]$ with $P_i\in P$ and $Q_i\in Q$ there exists a distinguished triangle $P_3\to E\to Q_3$ such that $P_3$ is an extension of $P_2$ by $P_1$ and  $Q_3$ is an extension of $Q_2$ by $Q_1$.

Since $P\perp Q[1]$, we have $P_2[-1]\perp Q_1$. 
Hence Proposition 1.1.9 of \cite{bbd} (cf. also Lemma 1.4.1(1) of \cite{bws}) implies 
the existence of  a commutative diagram
$$\begin{CD}
P_2[-1]@>{}>> E_2[-1]@>{}>> Q_2[-1] \\
@VV{}V@VV{}V@ VV{}V \\
P_1@>{}>> E_1@>{}>> Q_1\\
\end{CD}$$
whose rows are the corresponding distinguished triangles. Applying  Proposition 1.1.11 of \cite{bbd} to the left hand square of this diagram we deduce the existence of a commutative diagram 
$$\begin{CD}
P_1@>{}>> E_1@>{}>> Q_1\\
@VV{}V@VV{}V@ VV{}V \\
P_3@>{}>> E@>{}>> Q_3 \\
@VV{}V@VV{}V@ VV{}V \\
P_2@>{}>> E_2@>{}>> Q_2\end{CD}
$$
whose rows and columns are distinguished triangles (for some $P_3,Q_3\in \obj \cu$). Hence we obtain the result.

3. Since the class $\perpp  (Q[s])$ is extension-closed and closed with respect to all coproducts that exist in $\cu$, it contains $P$ whenever it contains $\cp$.

\end{proof}

Now we prove that certain classes of objects are Karoubian  (see \S\ref{snotata}). 

\begin{pr}\label{karcl}
Suppose that $\cu'$ is a full subcategory of $\cu$, $F'$ is an exact endofunctor on $\cu'$,  
 and $\cp$   is an extension-closed subclass of $ \obj \cu'$ such that $F'(\cp)\subset \cp$. 

1. Assume that $\cp[1]\subset \cp$ 
 and  $F' \cong F'[2] \bigoplus \id_{\cu'}$. 
Then $\cp$ is Karoubian. 

2. Dually, $\cp$ is Karoubian whenever  $\cp[-1]\subset \cp$ and   $F' \cong F'[-2] \bigoplus \id_{\cu'}$.

\end{pr}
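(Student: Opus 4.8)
The plan is to prove the statement by showing that any $X\in\obj\cu'$ which is a retract of some $Y\in\cp$ already lies in $\cp$; this is exactly Karoubi-closedness of $\cp$ in $\cu'$. Since in a triangulated category every retract is a direct summand (as recalled in \S\ref{snotata}), I would first fix maps $i\colon X\to Y$, $r\colon Y\to X$ with $r\circ i=\id_X$, an isomorphism $Y\cong X\oplus W$ (with $W=\co(i)\in\obj\cu'$), and the idempotent $p=i\circ r\colon Y\to Y$, which is the projection onto the $X$-summand. Applying the additive exact functor $F'$ then yields $F'(Y)\cong F'(X)\oplus F'(W)$, an object of $\cp$ (since $F'(\cp)\subset\cp$), together with the idempotent $F'(p)$ projecting onto $F'(X)$.

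The core of the argument is to manufacture a \emph{single} object $\Sigma\in\cp$ satisfying $\Sigma\cong X\oplus\Sigma[1]$. Granting such a $\Sigma$, the split distinguished triangle attached to this decomposition rotates to $\Sigma\to X\to\Sigma[2]\to\Sigma[1]$, exhibiting $X$ as an extension of $\Sigma[2]$ by $\Sigma$. As $\Sigma\in\cp$ and $\Sigma[2]\in\cp$ (using $\cp[1]\subset\cp$ twice), extension-closedness of $\cp$ would then force $X\in\cp$, which is the desired conclusion. So the whole proof reduces to building $\Sigma$ with these two properties simultaneously.

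The object I would take is $\Sigma=\co(\id_{F'(Y)}-F'(p))$. On the one hand, rotating the defining triangle $F'(Y)\xrightarrow{\id-F'(p)}F'(Y)\to\Sigma\to F'(Y)[1]$ shows that $\Sigma$ is an extension of $F'(Y)[1]$ by $F'(Y)$, and both of these lie in $\cp$ (the second since $\cp[1]\subset\cp$), so $\Sigma\in\cp$. On the other hand, under $F'(Y)\cong F'(X)\oplus F'(W)$ the map $\id-F'(p)$ is the projection onto $F'(W)$, i.e. $0_{F'(X)}\oplus\id_{F'(W)}$; hence its cone is $\co(0_{F'(X)})\oplus\co(\id_{F'(W)})\cong F'(X)\oplus F'(X)[1]$. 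Finally the hypothesis $F'\cong F'[2]\oplus\id_{\cu'}$ evaluated at $X$ gives $F'(X)\cong X\oplus F'(X)[2]$, so that $\Sigma\cong F'(X)\oplus F'(X)[1]\cong X\oplus\bigl(F'(X)[1]\oplus F'(X)[2]\bigr)=X\oplus\Sigma[1]$, which is precisely the relation needed above.

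The main obstacle — and the reason the hypotheses look as they do — is that the naive approaches are unavailable: idempotents need not split, and $\cp$ is closed only under extensions and the positive shift (not under coproducts), so a Bökstedt--Neeman telescope cannot be formed and the elementary cone-of-an-idempotent construction only ever produces objects like $X\oplus X[1]$, in which the two shift-parities stay coupled. The content of the assumption $F'\cong F'[2]\oplus\id$ is exactly to produce the ``geometric series'' object $\Sigma\cong\bigoplus_{k\ge 0}X[k]$, which genuinely lies in $\cp$ and from which the single copy $X$ can be peeled off; verifying that this $\Sigma$ is at once an element of $\cp$ and satisfies $\Sigma\cong X\oplus\Sigma[1]$ is the only real point. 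Part~2 is then the formal dual: applying part~1 in $\cu\opp$ (where $[1]$ becomes $[-1]$, extension-closedness is preserved, $\cp[-1]\subset\cp$ becomes $\cp[1]\subset\cp$, and $F'\cong F'[-2]\oplus\id$ becomes $F'\cong F'[2]\oplus\id$) gives the claim.
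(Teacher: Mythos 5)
Your core construction is sound and checks out: $\Sigma=\co(\id_{F'(Y)}-F'(p))$ is an extension of $F'(Y)[1]$ by $F'(Y)$ and hence lies in $\cp$, the identification $\Sigma\cong F'(X)\bigoplus F'(X)[1]\cong X\bigoplus\Sigma[1]$ is correct, and the rotated split triangle $\Sigma\to X\to\Sigma[2]\to\Sigma[1]$ does place $X$ in $\cp$ by extension-closedness. This is in fact a more economical version of the paper's own Eilenberg swindle, which assembles a three-fold sum $H\cong X\bigoplus\bigl(F(M)[2]\bigoplus F(M)[1]\bigoplus F(M)\bigr)$ out of three separate extensions before splitting off $X$. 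However, there is a genuine gap in what you set out to prove. The assertion is that $\cp$ is \emph{Karoubian}, i.e.\ that every idempotent endomorphism $p$ of an object $Y\in\cp$ yields a direct sum decomposition with summands in $\cp$; you instead prove that every retract $X$ of $Y$ \emph{that already exists as an object of $\cu'$} lies in $\cp$, which is Karoubi-closedness of $\cp$ in $\cu'$ --- a strictly weaker property when $\cu'$ is not idempotent-complete. (With $\cp=\obj\cu'$ your statement is vacuous, whereas the proposition is applied in exactly this way in Remark \ref{rkar}(3) to deduce that $\alo$-smashing triangulated categories are Karoubian; Corollary \ref{corokar}(1) likewise treats "Karoubian" and "Karoubi-closed" as distinct properties, deriving the latter from the former.) Your own observation that "idempotents need not split" is precisely why the opening move "fix $i,r$ with $r\circ i=\id_X$" does not cover a general idempotent: the candidate summand $X$ need not exist in $\cu'$ at all.

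The repair is the first step of the paper's proof, which you omit: replace the ambient category by $\kar(\cu')$ (which is triangulated and in which every idempotent of $Y$ does split, say $Y\cong X\bigoplus W$), extend $F'$ to an exact endofunctor of $\kar(\cu')$ via the universal property of the Karoubi envelope, and take $\cp$ to be strict there. After this reduction your argument goes through verbatim: $\Sigma$ is still the cone of an endomorphism of $F'(Y)\in\obj\cu'$, so it is an honest extension of objects of $\cp$, and the final triangle $\Sigma\to X\to\Sigma[2]\to\Sigma[1]$ exhibits $X$ as an extension of objects of $\cp$, whence $X\in\cp$ and the idempotent splits inside $\cp$. Your treatment of part 2 by duality is fine.
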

\begin{proof}
1. The proof is an easy generalization of the arguments used for the proof of \cite[Theorem 3.1]{schnur}.

Certainly we can assume that $\kar(\cu')\cong \cu$ and $\cp$ is 
strict in $\cu$. Then the universal property of the Karoubi envelope construction yields that    $F'$ extends to an exact endofunctor of $\cu$;  we will use the notation $F$ for this extension. We obviously have $F \cong F[2] \bigoplus \id_{\cu}$.

Now let $M\in \cp$, and assume that  $M \cong X \bigoplus Y$ in $\cu$ (for some $X, Y \in  \cu$).
Taking the direct sum of the triangles
$0 \to F(X) \stackrel{\id_{F(X)}}\to F(X) \to 0$,
$$F(X) \to 0 \to F(X)[1] \stackrel{\id_{F(X)[1]}}\to F(X)[1],$$ and 
$F(Y) \stackrel{\id_{F(Y)}}\to F(Y) \to 0 \to F(Y)[1]$,
we obtain the distinguished  triangle 
$$F(M) \to F(M) \to F(X)[1] \bigoplus F(X) \to F(M)[1].$$
It yields that 
$M_1=F(X)[1] \bigoplus F(X)\in \cp$. Similarly, $F(Y)[1] \bigoplus F(Y)\in \cp$; hence $M_2=F(Y)[2] \bigoplus F(Y)[1]$ belongs to $ \cp$ as well.
Moreover, $M_3=X \bigoplus F(X)[2] \bigoplus F(Y) \cong F(X) \bigoplus F(Y) \cong F(M)$ belongs to $\cp$ according to our assumptions.
 
Since $\cp$ is extension-closed, it is also additive; hence the object
$$\begin{gathered}H = M_1\bigoplus M_2\bigoplus M_3\cong X \bigoplus (F(X)[2] \bigoplus F(Y)[2]) \bigoplus (F(Y)[1] \bigoplus F(X)[1]) \\
\bigoplus (F(X) \bigoplus F(Y)) \cong X \bigoplus (F(M)[2] \bigoplus F(M)[1] \bigoplus F(M))\end{gathered}$$ belongs to $\cp$.
Taking the (split)   distinguished  triangle  $$H\stackrel{pr_X}{\to} X\to F(M)[3] \bigoplus F(M)[2] \bigoplus F(M)[1]\to H[1]$$ (where $pr_X$ is the natural projection of $H$ into $X$) we conclude that $X\in \cp$ (since $F(M)[2] \bigoplus F(M)[1] \bigoplus F(M)\in \cp$).

2. Obviously, this assertion is the categorical dual to the previous one.
\end{proof}

\begin{coro}\label{corokar}
1. If $\cp$ satisfies the assumptions of any of the parts of 
 our proposition and is also strict (i.e., closed with respect to  $\cu$-isomorphism) then it is also Karoubi-closed in $\cu$.

2. Assume that $\cu$ 
 is $\al$-smashing for some $\al\ge \alo$. Then for any $\cp\subset \obj \cu$  the classes $C_1=[\cup_{i\ge 0}\cp[i]\bral$ and $C_2=[\cup_{i\le 0}\cp[i]\bral$ are Karoubian and Karoubi-closed in $\cu$.

\end{coro}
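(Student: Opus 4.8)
The plan is to reduce both parts to Proposition \ref{karcl}, supplying for the second part an explicit ``infinite shift'' endofunctor assembled from countable coproducts.

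For part 1 I would argue that a strict Karoubian class is automatically Karoubi-closed in the ambient $\cu$. Given $M\in\cp$ and a $\cu$-retract $X$ of $M$, the composite $p\colon M\to X\to M$ is an idempotent of $\cu(M,M)$, and since $\cu'$ is full in $\cu$ and $M\in\obj\cu'$ we have $p\in\cu'(M,M)$. Because the hypotheses of (one of the parts of) Proposition \ref{karcl} guarantee that $\cp$ is Karoubian, this idempotent splits with image isomorphic to an object of $\cp$; as splittings of an idempotent are unique up to isomorphism and the splitting of $p$ in $\cu$ is $X$, the object $X$ is $\cu$-isomorphic to an element of $\cp$, whence $X\in\cp$ by strictness. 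The only point needing care here is the bookkeeping between splittings computed in $\cp$, in $\cu$, and in the Karoubi envelope $\kar(\cu)$, which is routine once one notes that the embedding $\cu\to\kar(\cu)$ is full and faithful.

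For part 2 I would apply Proposition \ref{karcl} with $\cu'=\cu$ and the endofunctor $F=\coprod_{n\in\n}(-)[2n]$. This $F$ is well-defined because $\al\ge\alo$ forces $\alz<\al$, so that the $\al$-smashing category $\cu$ has countable coproducts, and it is exact because countable coproducts of distinguished triangles are distinguished by Proposition \ref{pneem}(1). Splitting off the $n=0$ summand gives a natural isomorphism $F\cong\id_{\cu}\bigoplus F[2]$, matching the hypothesis of Proposition \ref{karcl}(1). For $\cp=C_1$ it then remains to check that $C_1$ is extension-closed and $\al$-smashing (immediate from its definition, which also makes it strict), that $C_1[1]\subset C_1$ (this follows from $(\cup_{i\ge 0}\cp[i])[1]\subset\cup_{i\ge 0}\cp[i]$ and the minimality defining $C_1$, since $[1]$ preserves both closure properties), and that $F(C_1)\subset C_1$. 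This last inclusion is where I expect the only genuine content to lie: for $M\in C_1$ each $M[2n]$ lies in $C_1$ by iterating $C_1[1]\subset C_1$, and $F(M)=\coprod_{n\in\n}M[2n]$ is a coproduct of $\alz<\al$ objects of the $\al$-smashing class $C_1$, hence belongs to $C_1$. It is precisely this step that fails when $\al=\alz$, which explains the hypothesis $\al\ge\alo$.

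Finally I would conclude: Proposition \ref{karcl}(1) shows that $C_1$ is Karoubian, and part 1 of the present corollary then shows that it is Karoubi-closed in $\cu$. The assertion for $C_2=[\cup_{i\le 0}\cp[i]\bral$ is entirely dual, using instead $G=\coprod_{n\in\n}(-)[-2n]$ (so that $G\cong\id_{\cu}\bigoplus G[-2]$), the inclusion $C_2[-1]\subset C_2$, and Proposition \ref{karcl}(2).
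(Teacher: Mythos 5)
Your proposal is correct and follows the paper's own argument essentially verbatim: part 1 is the observation that a strict Karoubian class is Karoubi-closed (which you merely spell out in more detail), and part 2 applies Proposition \ref{karcl} with the same functors $\coprod_{n\ge 0}(-)[\pm 2n]$, using $\al\ge\alo$ to form the countable coproducts and $C_1[1]\subset C_1$ (resp. $C_2[-1]\subset C_2$) exactly as in the paper.
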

\begin{proof}
1. Just note that any strict Karoubian subclass of $\obj \cu$ is obviously Karoubi-closed in $\cu$.

2. Certainly, $C_1[1]\subset C_1$ and $C_2[-1]\subset C_2$. Thus we can take $F=M\mapsto \coprod_{i\ge 0}M[2i]$ (resp. $F=M\mapsto \coprod_{i\le 0}M[2i]$) and apply part 1 (resp. part 2) of Proposition \ref{karcl} to obtain that $C_1$ (resp. $C_2$) is Karoubian. Since these classes are strict in $\cu$ by definition, we also obtain that they are Karoubi-closed in $\cu$. 

\end{proof}

\begin{rema}\label{rkar}
1. The functors $M \mapsto \coprod_{i\ge 0}M[2i]$ and $M \mapsto \coprod_{i\le 0}M[2i]$ appear to be the main examples of the functors that can be taken for $F$ in the context of Proposition \ref{karcl}. Our main reason to consider a "general"  functor $F$ is that these two examples are "not quite self-dual".

2. Recall that in Remark 3.3 of \cite{schnur} it was proved that for an arbitrary additive category $\bu$ the classes $C_1$ and $C_2$ of $\bu$-complexes concentrated in degrees at most $0$ and at least $0$, respectively, are Karoubian (hence, their closures with respect to $K(\bu)$-isomorphisms  are Karoubi-closed in the category $K(\bu)$). 

We (essentially) repeat the proof here. It certainly suffices to prove that $C_1$ is Karoubian since the statement for $C_2$ is its dual. We take $\cu'=K^-(\bu)$ (the homotopy category of bounded above complexes) and $F=M \mapsto \coprod_{i\le 0}M[2i]$.  Then Proposition  \ref{karcl}(1) yields that $C_1$ is Karoubian indeed.

This example explains why we 
 consider two not (necessarily) equal triangulated categories $\cu'\subset \cu$ in Proposition  \ref{karcl} (note that the functor $F$ cannot be extended to $\cu=K(\bu)$ in general).

3. Certainly, we may take $\cp=\obj \cu$ in Corollary \ref{corokar}; thus any $\alo$-smashing triangulated category $\cu$ in Karoubian. So we re-prove Proposition 1.6.8 of \cite{neebook} using a certain Eilenberg swindle argument.

4. If $\cu$ is $\alo$-smashing and a class $\cp\subset \obj \cu$ is $\alo$-smashing and also closed with respect to $[1]$ (resp. $[1]$) then we certainly have $\cp=[\cup_{i\ge 0}\cp[i]\bralo$ (resp. $\cp=[\cup_{i\le 0}\cp[i]\bralo$); thus $\cp$ is Karoubi-closed in $\cu$ according to part 2 of our corollary.
\end{rema}

\subsection{  
Generating weight structures by (countably) negative classes}
\label{salgen}

Till the end of this subsection 
 we will assume that $\cu$ and $\cu'$ are $\al$-smashing triangulated categories for some regular $\al$. 

We prove the basic properties of weight structures that are $\al$-generated by some $
\cp\subset \obj \cu$ (see Definition \ref{dwso} and \S\ref{snotata} for the definitions mentioned in our theorem).

\begin{theo}\label{tvneg}

\begin{enumerate}
\item\label{itvn1}
 Assume that $\al>\alz$ and $\cp$ is an $\al$-negative 
class of objects of an ($\al$-smashing) triangulated category $\cu'$; denote by $\cu$ the $\al$-localizing subcategory of $\cu'$ generated by $\cp$.
 Then there exists an $\al$-smashing weight structure $w$  on $\cu$ that is $\al$-generated by $\cp$. 

\item\label{itvn2}
Conversely, if a  weight structure $w$  on a (triangulated $\al$-smashing) category $\cu$  is $\al$-generated by some $\cp\subset\obj \cu$ then $\cp$ is $\al$-negative.  Moreover, $w$ is left non-degenerate and generated by $\cp$. 
 
\item\label{itvn3}
The heart of the weight structure $w$ in assertion \ref{itvn2} consists of all $\cu$-retracts of coproducts of  families of elements of $\cp$ of cardinality less that $\al$.\footnote{
If $\al \ge \alo$ then nothing will change if we take $\cu'$-retracts here, since $\cu$ is Karoubi-closed in $\cu'$ (apply either  Corollary \ref{corokar}(2) or Remark \ref{rkar}(3)).}

\item\label{itvn4}
 Assume that 
$(\cu,w)$ is 
as in assertion \ref{itvn2}
 and $F:\cu\to \du$ is an exact functor respecting coproducts of less than $\al$ objects, where $\du$ is a triangulated category 
 endowed with a weight structure $v$. Then $F$ is left weight-exact if and only if $F(\cp)\subset \du_{v\le 0}$.

Moreover, if $v$ (along with $\du$) is $\al$-smashing then $F$ is right weight-exact if and only if $F(\cp)\subset \du_{v\ge 0}$.

\item\label{itvn5}
 Under 
 the assumptions of the previous assertion suppose in addition that $F$ is weight-exact 
and surjective on objects, and $\al>\alz$. 
Then the weight structure $v$ is $\al$-generated by $F(\cp)$. 

In particular, $w$ is the only $\al$-smashing weight structure on $\cu$ whose heart contains $\cp$.


\item\label{itvn6}
  Assume that $\al\ge \alo$ and $\cu$ is an $\al$-smashing triangulated category.
Then a class $\cp\subset  \obj \cu$ is $\al$-negative 
if and only if the class $\cp\tal$ consisting of all coproducts of families of elements of $\cu$ of cardinality less than $\al$ is countably negative. 
\end{enumerate}

\end{theo}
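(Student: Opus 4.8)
Assertion \ref{itvn1} (existence) is the core, and the plan is to \emph{define} the candidate classes $\cu_{w\ge 0}=[\cup_{i\ge 0}\cp[i]\bral$ and $\cu_{w\le 0}=[\cup_{i\le 0}\cp[i]\bral$ and check Definition \ref{dwstr}. Semi-invariance holds on the nose since each $[\cdots\bral$ is stable under the relevant shift; Karoubi-closedness (axiom (i)) is exactly Corollary \ref{corokar}(2) (this is where $\al>\alz$ enters). Orthogonality reduces, via $\cu_{w\ge 0}[1]=[\cup_{i\ge 1}\cp[i]\bral$, to $\cp\perp[\cup_{i\ge 1}\cp[i]\bral$, which is precisely $\al$-negativity, and this propagates to all of $\cu_{w\le 0}$ by Proposition \ref{pstar}(3). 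The substantive axiom is weight decompositions: for each $n$ I would show that the class $\cu_{w\le n}\star\cu_{w\ge n+1}$ of $n$-decomposable objects is $\al$-smashing and extension-closed by Proposition \ref{pstar}(1,2) (the hypothesis $P\perp Q[1]$ there being $\cu_{w\le n}\perp\cu_{w\ge n+2}$, again a shift of $\al$-negativity), and that it contains every $\cp[j]$ by a trivial decomposition; hence it contains $[\cup_{j\in\z}\cp[j]\bral=\obj\cu$, so every object admits an $n$-weight decomposition.

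For the converse assertion \ref{itvn2}: $\al$-negativity of $\cp$ follows because $\cp\subset\cu_{w\le 0}$ and $[\cup_{i>0}\cp[i]\bral\subset\cu_{w\ge 1}$, so orthogonality gives $\cp\perp[\cup_{i>0}\cp[i]\bral$; and $w$ is generated by $\cp$ in the sense of Definition \ref{dwso}(\ref{igen}) because Proposition \ref{pbw}(\ref{iort}) identifies $\cu_{w\ge 0}$ with $(\cu_{w\le -1})^{\perp}$, and an orthogonal complement is unchanged when $\cup_{i<0}\cp[i]$ is replaced by its extension- and $\al$-smashing closure $\cu_{w\le -1}$. Left non-degeneracy I would prove directly: if $M\in\cap_l\cu_{w\ge l}$ then $\cp[i]\perp M$ for every $i$ (take $l=i+1$ and use $\cp[i]\subset\cu_{w\le i}$); since $\{X:X\perp M\}$ is extension-closed and $\al$-smashing (as $\cu(\coprod_{s<\al}X_s,M)=\prod_s\cu(X_s,M)$) and contains $\cup_i\cp[i]$, it contains $\obj\cu\ni M$, whence $M\perp M$ and $M=0$. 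For the heart (assertion \ref{itvn3}), the inclusion $\supset$ is clear from $\cp\subset\cu_{w=0}$ together with Proposition \ref{pbw}(\ref{ismash}) and axiom (i); for $\subset$ I would, given $M\in\cu_{w=0}$, construct a weight-zero cellular cover $C\to M$ with $C$ a $<\al$-coproduct of $\cp$-elements and $\co(C\to M)\in\cu_{w\ge 1}$ (built inductively along the extension/coproduct generation of $\cu_{w\ge 0}$, with $\al$-negativity keeping the cover small and the cone connective), and then use $\cu(M,\co)=0$ (orthogonality) to split it, exhibiting $M$ as a retract of $C$; this generalizes \cite[Theorem 4.5.2]{bws}, and the cellular step is the delicate point here.

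Assertions \ref{itvn4} and \ref{itvn5} are then formal. The forward implications in \ref{itvn4} are trivial since $\cp\subset\cu_{w=0}$. For the converse, $F^{-1}(\du_{v\le 0})$ is extension-closed (exactness plus Proposition \ref{pbw}(\ref{iext})) and $\al$-smashing (as $F$ respects $<\al$-coproducts and $\du_{v\le 0}$ is coproduct-closed by Proposition \ref{pbw}(\ref{icopr})), and it contains $\cup_{i\le 0}\cp[i]$, hence all of $\cu_{w\le 0}$; the right weight-exact statement is identical once $\du_{v\ge 0}$ is $\al$-smashing. For \ref{itvn5}, given $N\in\du_{v\le 0}$ I would write $N\cong F(M)$ (surjectivity) and apply $F$ to a weight decomposition of $M$, obtaining a weight decomposition $F(w_{\le 0}M)\to N\to F(w_{\ge 1}M)$ of $N$; by Proposition \ref{pbw}(\ref{iwdmod}) $N$ is a retract of $F(w_{\le 0}M)\in[\cup_{i\le 0}F(\cp)[i]\bral$, and since $\al>\alz$ this class is Karoubi-closed (Corollary \ref{corokar}(2)), so $N$ lies in it; dually for $\du_{v\ge 0}$. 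The final uniqueness clause is the case $F=\id_\cu$: any $\al$-smashing $v$ with $\cp\subset\cu_{v=0}$ makes $\id_\cu$ weight-exact and surjective, so $v$ is $\al$-generated by $\cp$, forcing $v=w$.

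Finally, for assertion \ref{itvn6}, write $G=\cup_{i>0}\cp\tal[i]$. The implication ``$\cp$ $\al$-negative $\Rightarrow\cp\tal$ countably negative'' is the easy direction: each generator of $[G\bralo$ is a $<\al$-coproduct of elements of $\cup_{i>0}\cp[i]$, so $[G\bralo\subset[\cup_{i>0}\cp[i]\bral$; then $\al$-negativity yields $\cp\perp[G\bralo$, and this upgrades to $\cp\tal\perp[G\bralo$ via $\cu(\coprod_{s<\al}P_s,-)=\prod_s\cu(P_s,-)$. The converse is the substantive part, and I expect it to be the main obstacle: it amounts to the reverse inclusion $[\cup_{i>0}\cp[i]\bral\subset[G\bralo$, i.e. to showing that the countably-smashing closure $[G\bralo$ is already $\al$-smashing. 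For $\al=\alo$ the two closures literally coincide and there is nothing to prove. For $\al>\alo$ a $<\al$-coproduct of $G$-generators reorganizes, by splitting over the \emph{countably many} levels $i>0$ and using regularity of $\al$, into a countable coproduct of $G$-elements and so stays in $[G\bralo$; but pushing this through the transfinite extension/coproduct construction runs into limit stages of small cofinality, which the naive induction cannot handle. I would resolve this using the ``standard form'' for countably-smashing closures furnished by the theory of countable homotopy colimits of \S\ref{suppl}---every object of $[G\bralo$ being, up to retract, a countable homotopy colimit of finite extensions of coproducts of $G$-elements---since coproducts commute with homotopy colimits and with cones, a $<\al$-coproduct of such objects is again of this form. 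This is precisely why this assertion is deferred to \S\ref{salmb}.
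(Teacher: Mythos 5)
Your parts \ref{itvn1}, \ref{itvn2}, \ref{itvn4} and \ref{itvn5} follow the paper's proof essentially step for step (for part \ref{itvn5} the paper quotes Proposition \ref{pdesc}(1), whose proof is precisely your "apply $F$ to a weight decomposition and split via Proposition \ref{pbw}(\ref{iwdmod})" argument, so that is only a presentational difference). In part \ref{itvn3}, however, you make the step harder than it is: no inductive "cellular cover" is needed, and the point you flag as delicate dissolves if you reuse the device from your own part \ref{itvn1}. Namely, the candidate class $C$ (retracts of $<\al$-coproducts of elements of $\cp$) is extension-closed by Proposition \ref{pbw}(\ref{isplit}) and $\al$-smashing, so $C\star\cu_{w\ge 1}$ is extension-closed and $\al$-smashing by Proposition \ref{pstar}(1,2); since it contains $\cup_{i\ge 0}\cp[i]$, it equals $\cu_{w\ge 0}$. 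Hence any $M\in\cu_{w=0}$ has a weight decomposition with $w_{\le 0}M\in C$, and Proposition \ref{pbw}(\ref{iwdmod}) exhibits $M$ as a retract of it; your splitting endgame is correct, but the existence of the cover is exactly what this two-line argument delivers.

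The one genuine gap is in the converse of part \ref{itvn6}. Your reduction to "$[G\bralo$ is $\al$-smashing" is correct, and you rightly point to \S\ref{salmb}; but the "standard form" you invoke---that every object of a countably-smashing closure is, up to retract, a countable homotopy colimit of finite extensions of coproducts of generators---is not available for bare closures. The class of such homotopy colimits is indeed closed under coproducts, but its extension-closedness is not clear: to splice two towers one must lift the connecting maps compatibly, and without orthogonality there are obstructions. The paper's Theorem \ref{tabo}(\ref{iahcl},\ref{iabdhcl}) supplies the colimit presentation only in the presence of a weight structure, via $\rinf(M)\cong\hcl_i\, w_{\le i}M$ together with the canonical connecting morphisms of Proposition \ref{pbw}(\ref{ifilt},\ref{icompl}). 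The repair is the bootstrap actually carried out in Corollary \ref{cabo}(I.2): apply your own part \ref{itvn1} with $\al$ replaced by $\alo$ (legitimate, since $\cp\tal$ is $\alo$-negative by hypothesis and $\alo>\alz$) to obtain the weight structure $w'$ that is $\alo$-generated by $\cp\tal$ on $\cu'=\lan\cp\tal\ralo$. Then $[G\bralo=\cu'_{w'\ge 1}$, every object of $\cu'$ is almost bounded above, so each is $\hcl_i\, w'_{\le i}M$ with terms in the bounded classes $\cu'_{[1,i]}$; Corollary \ref{cabo}(I.1) (an induction on $n-m$ using Proposition \ref{pbw}(\ref{ifilt})) shows these classes are $\al$-smashing in $\cu$ because the heart of $w'$---retracts of countable coproducts of elements of $\cp\tal$---is, and since coproducts commute with these homotopy colimits, $\cu'_{w'\ge 1}$ is $\al$-smashing; the orthogonality axiom for $w'$ then gives $\cp\perp[\cup_{i\ge 1}\cp[i]\bral$. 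So your plan for part \ref{itvn6} is salvageable, but only after being routed through the weight structure constructed in part \ref{itvn1}, which your sketch omits.
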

\begin{proof}

\ref{itvn1}. We should prove that the couple $([\cup_{i\ge 0}\cp[i]\bral,[\cup_{i\le 0}\cp[i]\bral)$ satisfies the axioms of a weight structure on $\cu$ (recall that $[\cup_{i\ge 0}\cp[i]\bral$ is $\al$-smashing). These classes are Karoubi-closed in $\cu$ according to Corollary \ref{corokar}(2). The corresponding semi-invariance with respect to shifts properties are obvious. Since $\cp$ is $\al$-negative, the orthogonality axiom 
 follows from Proposition \ref{pstar}(3) (we take $P=[\cup_{i\le 0}\cp[i]\bral$, $Q=[\cup_{i>0}\cp[i]\bral$, and $s=0$ in it). Parts 1 and 2 of the proposition imply that the class $ [\cup_{i\le 0}\cp[i]\bral \star ([\cup_{i\ge 0}\cp[i]\bral [1])$ (where the operation $-\star-$ is defined in Proposition \ref{pstar}(1)) is extension-closed and $\al$-smashing; hence it coincides with $\obj \cu$ and we obtain the weight decomposition axiom.


\ref{itvn2}. Once again,  if $w$ is $\al$-generated by  $\cp$ then $w$ is  $\al$-smashing. Hence $\cu_{w=0}$ is $\al$-negative (see Proposition \ref{pbw}(\ref{ivneg})); thus $\cp$ also is. 

Now let $M$ be a left weight-degenerate object of $\cu$; consider the class $P=\perpp M$. Since $\cp[i]\subset \cu_{w=i}$ for all $i\in \z$, the orthogonality axiom for 
$w$ implies that $P$ contains $\cup_{i\in \z}\cp[i]$. Since $P$ is obviously $\al$-smashing and extension-closed, it also contains $M$; 
  hence $M=0$.\footnote{This simple argument probably originates from \cite{neebook}.} 

Lastly, the class $R= (\cup_{i<0}\cp[i])\perpp$  certainly contains $\cu_{w\ge 0}$. So for $N\in R$ we should verify that $N\in \cu_{w\ge 0}$. 
 Once again we note that the class $\perpp N$ is $\al$-smashing and extension-closed; since it contains $\cp[i]$ for all $i<0$, it also contains  $[\cup_{i<0}\cp[i]\bral=\cu_{w\le 0}$. Hence $N$ belongs to $\cu_{w\ge 0}$ (see  Proposition \ref{pbw}(\ref{iort})). 
Thus $R=\cu_{w\ge 0}$ indeed.

\ref{itvn3}. Denote  our candidate for $\cu_{w=0}$ by $C$.
Firstly we note that  $C\subset \cu_{w=0}$ since the latter class is $\al$-smashing (see Proposition \ref{pbw}(\ref{ismash})) and Karoubi-closed in $\cu$.

Applying Proposition \ref{pbw}(\ref{isplit}) 
 we obtain that $C$  is  extension-closed (since this class is certainly additive); certainly, it is also $\al$-smashing.  

Next, the class $C \star \cu_{w \ge 1}$ is   extension-closed and $\al$-smashing according to  Proposition \ref{pstar}(1,2). Hence this class coincides with $[\cp\cup (\cup_{i> 0}\cp[i])\bral= \cu_{w \ge 0}$. Thus for any $M\in \cu_{w= 0}$ there exists its weight decomposition $LM\to M\to RM\to LM[1]$ with $LM\in C$.  Since $M$ is a retract of $LM$ (see Proposition \ref{pbw}(\ref{iwdmod})) we obtain that $M\in C$.

\ref{itvn4}. Since $F$ respects coproducts of less than $\al$ objects, we have $F(\cu_{w \le 0})\subset [\cup_{i\le  0}F(\cp)[i]\bral$  and $F(\cu_{w \ge 0})\subset [\cup_{i\ge  0}F(\cp)[i]\bral$. It remains to recall that $\du_{v\le 0}$ is closed with respect to $\du$-coproducts  (see Proposition \ref{pbw}(\ref{icopr})), and  $\du_{v\le 0}$ is $\al$-smashing whenever $v$ is  (by definition).

\ref{itvn5}. 
According to Proposition \ref{pdesc}(1) below we have $\du_{v\le 0}=\kar_{\du}(F( [\cup_{i\le  0}\cp[i]\bral))$ and  $\du_{v\ge 0}=\kar_{\du}(F( [\cup_{i\ge  0}\cp[i]\bral))$.
Since $F$ respects coproducts and $w$ is $\al$-smashing, we obtain that $v$ is $\al$-smashing as well. Hence $D_1=
[\cup_{i\le  0}F(\cp)[i]\bral \subset \du_{v\le 0}$ and  $D_2=
[\cup_{i\ge  0}F(\cp)[i]\bral\subset \du_{v\ge 0}$  according to Proposition \ref{pbw}(\ref{icoprt},\ref{ismash}). 

Now, the classes $D_i$ ($i=1,2$) are  Karoubi-closed in $\du$ (see Corollary \ref{corokar}(2)); thus they coincide with   $\du_{v\le 0}$ and  $\du_{v\ge 0}$, indeed.

We conclude by noting that the "in particular" part of our assertion immediately follows from the "main" part if we take $F$ to be the identity of $\cu$. 

\ref{itvn6}. The "only if" implication is obvious; the converse one is  given by Corollary \ref{cabo}(I.2) below.  

\end{proof}

Now we discuss examples to our theorem and its actuality.

\begin{rema}\label{rcompar}
\begin{enumerate}

\item\label{imax} 
It is certainly interesting to know which $\al$-smashing weight structures are $\al$-generated 
  by some   classes of objects. Obviously, if $w$ is $\al$-generated by some $\cp$ then it is also $\al$-generated by $\cu_{w=0}$. So, the question is whether $\cu=\lan \cu_{w=0}\ral$. 
	We don't know any "complete" answer to this question; so here we will only note that this is 
the case if  $\cu=\lan P\ral$ 
 for some class $P\subset \obj \cu_b$ (according to Proposition \ref{pbw}(\ref{iextcub})). 

If we assume in addition that $w$ is generated by this $P$ then 
 this condition may be re-formulated as follows: for each $M\in P$ there exists $i_M\in \z$ such that $P\perp M[i]$ for all $i>i_M$ (cf. 
 Corollary \ref{classgws}(3) below). 

Some more results in this direction may be found in Corollary \ref{cabo}(II.2) and Remark \ref{rconj}(1) below.

\item\label{imorexamples} 
Now we describe rather explicit examples of $\cp\subset \obj\cu'$ such that $\cp$ satisfies the $\al$-negativity 
 condition. 

Certainly, for any $\al$-smashing additive category $\bu$ the class $\obj \bu$ is $\al$-smashing in the category $\cu'=K(\bu)$, and part \ref{itvn4} of our theorem implies that the corresponding $w$ is the restriction to $\cu$ of the 
weight structure $\wstu$ on $\cu'$ (see Remark \ref{rstws}(1)). 

One can easily generalize this example by choosing some $d\ge 0$ and 
 a {\it differential graded} category $C$ such that $C^i(M,N)=\ns$ for any $M,N\in \obj \bu$ whenever $i>0$ or $i<-d$ (see \S6.1 of \cite{bws} for the notation) that is $\al$-smashing in the obvious sense. 
 We take $\cu'$ to be the homotopy category of the easily defined category of {\it unbounded twisted complexes}\footnote{It appears that twisted complexes were originally defined in \cite[\S8]{tlt}, and they were related to differential graded categories in \cite{bk}.} over $C$ (cf. loc. cit.;  we assume $C^i(M,N)=\ns$ for $i<-d$  to ensure that unbounded twisted complexes can be defined without any problem). Then the 
  homotopy category of $C$ is a full subcategory of $\cu'$ that easily seen to be $\al$-negative in it.

Lastly, one can obtain plenty of examples of $\al$-generated weight structures via weight-exact localizations; see 
Theorem \ref{tloc} and Remarks \ref{rprloc} and \ref{rrheart} below.


\item\label{ifiltr}
For any $\al$-smashing weight structure $w$ on $\cu$, any regular infinite $\be\le \al$, and any $\cp\subset \cu_{w=0}$ part \ref{itvn1} of our theorem gives a weight structure 
 on the category $\lan \cp \ra^{\be}$. Moreover, Proposition \ref{prestr}(1)  (along with part \ref{itvn4} of our theorem) implies that this weight structure is the restriction of $w$ to   $\lan \cp \ra^{\be}$. So, if we fix $\cp$ and vary $\be$ then we obtain a filtration of $\cu$ by subcategories such that $w$ restricts to them. 
As we will demonstrate 
in Remark \ref{rcgws}(\ref{itop}) below, 
it does make sense to take $\cp\neq  \cu_{w=0}$ here.

\item\label{izneg}
 Certainly, a class $\cp\subset \obj \cu$ is $\alz$-negative if and only if $\cp\perp\cp[i]$ for all $i\ge 0$ (we will also say that $\cp$ is just {\it negative} if the latter condition is fulfilled). The authors doubt that any similar easy criterion 
is available for 
$\al>\alz$ (yet see  Theorem \ref{tvneg}(\ref{itvn6})). 
However, we will mention a certain condition that implies $\al$-negativity in Remark \ref{rpperf} below.

\item\label{ifiltrb} The filtration on an  
 $\al$-smashing category $\cu$ generated by a given 
$\al$-negative $\cp\subset \obj \cu$ (see part \ref{ifiltr} 
of this remark) can easily be "completed" by terms corresponding to those cardinals that are not regular (and less than $\al$). Indeed, to any cardinal $\be$ (that is less than $\al$) one can associate the subcategory $\cu_{\cp}^{\be}=\cup_{\gamma\le \be}\lan \cp \ra^{\gamma}$ 
 (here $\gamma$ runs through regular cardinals that are not greater than $\be$). 
 $\cu_{\cp}^{\be}$ certainly coincides with $\lan \cp \ra^{\be}$ if $\be$ is regular, and the weight structure $w$ 
  $\al$-generated by $\cp$ obviously restricts to $\cu_{\cp}^{\be}$.

So, the only "problem" with $\cu_{\cp}^{\be}$ is that it does not have to be $\be$-smashing (unless $\be$ is regular). 

\item\label{ialz} 
Lastly, we wouldn't have to exclude the case $\al=\alz$ in parts \ref{itvn1} and \ref{itvn5} of our theorem if we have had   modified our definition of the weight structure $\alz$-generated by $\cp$ as follows: $w=(\kar_{\cu}([\cup_{i\le 0}\cp[i]]_{\alz}), \kar_{\cu}([\cup_{i\ge 0}\cp[i]]_{\alz})) $; see Corollary 2.1.2 of \cite{bonspkar}. 
	 \end{enumerate}
	
\end{rema}

\subsection{ On class-generated weight structures}\label{sclgen}

Due to the importance of the smashing case of our statements we treat them separately (though this is not absolutely necessary; cf. Remark \ref{rclass}(2)).
So till the end of this section we will assume that $\cu$ is smashing.

The following corollary is possibly even more actual than Theorem \ref{tvneg} itself. Once again, one may consult Definition \ref{dwso} and \S\ref{snotata} for the definitions mentioned in it.

\begin{coro}\label{classgws}
1. The natural "smashing" versions of all of the assertions of Theorem \ref{tvneg} are fulfilled.

In particular, if $\cu$ is generated by its class-negative class of objects $\cp$ as its own localizing subcategory then there exists a smashing weight structure $w$ on $\cu$ that is class-generated by $\cp$. Moreover,   $w$ is generated by $\cp$,  
 $w$ is the only smashing weight structure on $\cu$ whose heart contains $\cp$, and $\cu_{w=0}$ 
consists of all $\cu$-retracts of coproducts of  families of elements of $\cp$. 
Furthermore,  
for a smashing weight structure $v$ on (a smashing triangulated category) $\du$ an exact  functor $F:\cu\to \du$ that respects coproducts is left (resp. right) weight-exact 
 if and only if it sends $\cp$ into $\cu_{w\le 0}$ (resp. into $\cu_{w\ge 0}$).

2. For any infinite regular $\al$ 
 this weight structure $w$ restricts to an $\al$-smashing weight structure $w^{\al}$ that is $\al$-generated by $\cp$ on the category $\lan \cp \ral$.
Furthermore, for any (not necessary regular) cardinal $\be$ our $w$  restricts to the triangulated subcategory of $\cu$ whose object class equals 
$\cu_{\cp}^{\be}=  \cup_{\gamma\le \be}\obj(\lan \cp \ra^{\gamma})$ 
 (here $\gamma$ runs through regular cardinals that are not greater than $\be$). 

3. A smashing weight structure $w$ on $\cu$ is class-generated by $\cu_{w=0}$ whenever  $\cu=\lan \cq\rab$  for some $\cq\subset \obj \cu_b$. 
Moreover, if  $w$ is generated by $\cq$ then it suffices to assume that for each $M\in \cq$ there exists $i_M\in \z$ such that $\cq\perp M[i]$ for all $i>i_M$. 

\end{coro}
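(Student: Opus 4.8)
The plan is to reduce all three parts to Theorem \ref{tvneg}, Proposition \ref{prestr}, and the orthogonality/boundedness properties of Proposition \ref{pbw}. For part 1 I would check that the proof of Theorem \ref{tvneg} goes through verbatim once each $\al$-smashing extension-closure $[-\bral$ is replaced by its smashing counterpart $[-\brab$. The only external inputs are Proposition \ref{pstar} and Corollary \ref{corokar}, and each has a smashing analogue proved by the same argument: Proposition \ref{pneem}(1) applies to every small family of triangles (a family of cardinality $\kappa$ has cardinality less than the regular cardinal $\kappa^+$), giving the smashing form of Proposition \ref{pstar}(1); and the Eilenberg--swindle argument behind Proposition \ref{karcl} and Corollary \ref{corokar}(2) uses only countable coproducts, which exist in any smashing (hence $\alo$-smashing) $\cu$, so $[\cup_{i\ge 0}\cp[i]\brab$ and $[\cup_{i\le 0}\cp[i]\brab$ are Karoubian and Karoubi-closed. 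The restriction $\al>\alz$ from Theorem \ref{tvneg}(\ref{itvn1},\ref{itvn5}) disappears because ``smashing'' already supplies all small (in particular countable) coproducts. Alternatively one passes to a larger universe in which $\cu$ becomes $\al$-smashing for a suitable regular $\al>\alz$ and quotes Theorem \ref{tvneg} directly (Remark \ref{rclass}(2)); under this enlargement $[-\brab=[-\bral$ and ``class-negative'' becomes ``$\al$-negative''. Either way the generation, uniqueness, heart, and weight-exactness clauses are the literal smashing translations of Theorem \ref{tvneg}.

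For part 2, note first that a class-negative $\cp$ is $\al$-negative for every regular $\al$, since $[\cup_{i>0}\cp[i]\bral\subset[\cup_{i>0}\cp[i]\brab$. Hence for $\al>\alz$ Theorem \ref{tvneg}(\ref{itvn1}) (applied inside the $\al$-smashing category $\cu$) yields an $\al$-smashing weight structure $w^{\al}$ on $\lan\cp\ral$ that is $\al$-generated by $\cp$ (for $\al=\alz$ one uses the Karoubi-closure modification of Remark \ref{rcompar}(\ref{ialz})). It remains to identify $w^{\al}$ with the restriction of $w$. I would apply Theorem \ref{tvneg}(\ref{itvn4}) to the inclusion $\iota\colon\lan\cp\ral\hookrightarrow\cu$: it respects coproducts of fewer than $\al$ objects (as $\lan\cp\ral$ is $\al$-localizing in $\cu$), and $\iota(\cp)=\cp\subset\cu_{w=0}=\cu_{w\le 0}\cap\cu_{w\ge 0}$, so $\iota$ is both left and right weight-exact, i.e. weight-exact. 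Proposition \ref{prestr}(1) then gives simultaneously that $w$ restricts to $\lan\cp\ral$ and that the restriction is exactly $w^{\al}$. For the final clause, the categories $\lan\cp\ra^{\gamma}$ with regular $\gamma\le\be$ form a directed family (any two lie in $\lan\cp\ra^{\max(\gamma_1,\gamma_2)}$), so $\cu_{\cp}^{\be}$ is a triangulated subcategory; since $w$ restricts to each $\lan\cp\ra^{\gamma}$, every object of $\cu_{\cp}^{\be}$ has a weight decomposition inside $\cu_{\cp}^{\be}$, whence $w$ restricts to it (cf. Remark \ref{rcompar}(\ref{ifiltrb})).

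For part 3 the aim is to verify the two hypotheses under which part 1 applies to $\cp=\cu_{w=0}$, namely that $\cu_{w=0}$ is class-negative and that $\cu=\lan\cu_{w=0}\rab$; granting these, part 1 supplies a smashing weight structure class-generated by $\cu_{w=0}$, whose uniqueness clause (the only smashing weight structure with heart containing $\cu_{w=0}$) forces it to equal $w$. Class-negativity is immediate from Orthogonality once one notes, as in the smashing form of Proposition \ref{pbw}(\ref{ismash},\ref{ivneg}), that $[\cup_{i>0}\cu_{w=0}[i]\brab\subset\cu_{w\ge 1}$ (as $\cu_{w\ge 0}$ is smashing and extension-closed). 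For the generation hypothesis under the first assumptions, Proposition \ref{pbw}(\ref{iextcub}) identifies $\obj\cu_b$ with the extension-closure of $\cup_{i\in\z}\cu_{w=0}[i]$, so $\cq\subset\obj\cu_b\subset\obj\lan\cu_{w=0}\rab$ and hence $\cu=\lan\cq\rab\subset\lan\cu_{w=0}\rab\subset\cu$. For the ``moreover'' clause it suffices to deduce $\cq\subset\obj\cu_b$ and then invoke the first clause. Here I use that $w$ is generated by $\cq$: Proposition \ref{pbw}(\ref{igenw}) gives $\cq\subset\cu_{w\le 0}$, while Definition \ref{dwso}(\ref{igen}) says $\cu_{w\ge 0}$ consists of those $P$ with $\cu(N[i],P)=\ns$ for all $N\in\cq,\ i<0$, equivalently $\cu(N,P[k])=\ns$ for all $N\in\cq,\ k\ge 1$. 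Fixing $M\in\cq$, for all $N\in\cq$ and $k\ge 1$ we have $i_M+1+k>i_M$, so the hypothesis gives $\cu(N,M[i_M+1+k])=\ns$; thus $M[i_M+1]\in\cu_{w\ge 0}$, i.e. $M\in\cu_{w\ge -i_M-1}$, and with $M\in\cu_{w\le 0}$ this yields $M\in\cu_{[-i_M-1,0]}\subset\obj\cu_b$.

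The only genuinely non-formal point, and hence the main obstacle, is this boundedness computation in part 3: extracting two-sided $w$-boundedness of each $M\in\cq$ from the one-sided orthogonality $\cq\perp M[i]$ for $i>i_M$, via the ``generated'' characterization of $\cu_{w\ge 0}$. Everything else is organizational---the reduction of the smashing statements to Theorem \ref{tvneg} in part 1, and the assembly of the restriction claims from the weight-exactness criterion and Proposition \ref{prestr} in part 2. In writing this up I would take particular care with the direction of orthogonality in the generated condition, and with the observation that class-negativity implies $\al$-negativity for every $\al$, so that the single class $\cp$ feeds the entire filtration of part 2 at once.
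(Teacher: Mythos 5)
Your proposal is correct; the only real divergence from the paper is in part 1, and even there the paper explicitly sanctions your route as an alternative. The paper's primary argument is a one-line reduction: it applies Theorem \ref{tvneg} for $\al$ running through \emph{all} infinite regular cardinals at once, the key observation being that for $I=\z,\n,-\n$ the class $[(\cup_{i\in I}\cp[i])\brab$ is the union of the classes $[(\cup_{i\in I}\cp[i])\bral$ over all regular $\al$ (this union being closed under all small coproducts), together with the facts that "smashing" = "$\al$-smashing for all $\al$" and "class-negative" = "$\al$-negative for all $\al$". You instead re-run the proof of Theorem \ref{tvneg} verbatim with $[-\brab$ in place of $[-\bral$, which obliges you to verify the smashing analogues of Proposition \ref{pstar} (via the $\kappa<\kappa^+$ trick applied to Proposition \ref{pneem}(1)) and of Corollary \ref{corokar}(2) (the Eilenberg swindle only needing countable coproducts); both checks are correct, and the paper lists exactly this ("modify the proof in the obvious way") and the larger-universe option as alternatives. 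The union argument buys brevity and reuses Theorem \ref{tvneg} as a black box; your redo buys self-containedness at the cost of re-auditing the ingredient lemmas. Parts 2 and 3 coincide with the paper's proof: part 2 is the argument of Remark \ref{rcompar}(\ref{ifiltr},\ref{ifiltrb}) (weight-exactness of the embedding via Theorem \ref{tvneg}(\ref{itvn4}) plus Proposition \ref{prestr}(1), with the $\al=\alz$ caveat handled as in Remark \ref{rcompar}(\ref{ialz})), and part 3 is the paper's reduction via Proposition \ref{pbw}(\ref{iextcub},\ref{igenw}) together with the uniqueness clause of part 1. One cosmetic point: in the boundedness computation you show $M\in\cu_{w\ge -i_M-1}$, whereas the hypothesis $\cq\perp M[i]$ for $i>i_M$ is in fact \emph{equivalent} to $M\in\cu_{w\ge -i_M}$ (test $M[i_M]$ rather than $M[i_M+1]$ against the generated characterization of $\cu_{w\ge 0}$); your weaker off-by-one bound still lands $M$ in $\obj\cu_b$, so nothing breaks.
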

\begin{proof}
1. It is easily seen that this statement follows from Theorem \ref{tvneg} applied for $\al$ running through all infinite regular cardinals. 
Indeed, a weight structure (and a triangulated category) is smashing if and only if  it is $\al$-smashing for all $\al$, $\cp$ is class-negative if and only if it is $\al$-negative for all $\al$, etc., and it remains to
note that   for any $I\subset \z$ (the "actual" cases here are $I=\z$, $I=\n$, and $I=-\n$)  the class $[(\cup_{i\in I}\cp[i])\brab$ is the union of $[(\cup_{i\in I}\cp[i])\bral$ for all (infinite regular)  $\al$ (since this union is obviously closed with respect to $\cu$-coproducts).  

Alternatively,  one can  modify the proof Theorem \ref{tvneg}  in the obvious way, or pass to a larger universe (cf. Remark \ref{rclass}(2)).

2. Both statements follow from our definitions 
  along with Theorem \ref{tvneg}(\ref{itvn1}) immediately.

3. To prove the first statement it suffices to recall that $\cq\subset \cu_b=\lan \cu_{w=0}\ra$  (see Proposition \ref{pbw}(\ref{iextcub})).
Next, if $\cq$  generates $w$ then  $\cq\subset \cu_{w\le 0}$ (see Proposition \ref{pbw}(\ref{igenw})), whereas the assumption on the existence of $i_M$ is equivalent to  $M\in \cu_{w\ge -i_M}$ according to Proposition \ref{pbw}(\ref{iort}). 

\end{proof}

Now we discuss the assumptions of our corollary and examples for it.

\begin{rema}\label{rcgws}
\begin{enumerate}
\item\label{ibger} The assumptions on $\cq$ in part 3 of the corollary 
 are fulfilled for the categorical duals of all the  Gersten weight structures studied in \cite{bgn} (whereas these Gersten weight structures themselves are {\it cosmashing} and {\it cocompactly cogenerated}); see Theorems 3.3.2(7) and 5.1.2(II.6) of ibid. 

Now we describe 
 a simple example for our corollary that is actual for \cite{bokum} and \cite{bwcp}. For a smashing additive category $\bu$ one can take the smashing category $\cu'=K(\bu)$, consider its localizing subcategory $\cu$ generated by $\obj \bu$, and the weight structure $w$ class-generated by $\obj \bu$ in $\cu$. 
 $w$ is the only smashing weight structure on $\cu$ whose heart contains $\obj \bu$ (see part 1 of our corollary). On the other hand, the stupid weight structure on $\cu'$ (see Remark \ref{rstws}) is  smashing on   $\cu'$; it restricts to $\cu$ since $\obj\cu$ contains $\cu'_{\wstu\ge 0}$ (see Proposition \ref{prestr} and Remark \ref{rconj}(2)). Thus $w$ equals the restriction of $\wstu$ to $\cu$. 

Another interesting observation is that this category $\cu'$ is easily seen not to contain non-zero  right or left weight-degenerate objects. Moreover, $\cu'$ is "usually" distinct from $\cu$. For instance, if $\bu$ is the category of free $\z/4\z$-modules then the complex $M=\dots \stackrel{\times 2}{\to}\z/4\z\stackrel{\times 2}{\to}\z/4\z\stackrel{\times 2}{\to}\z/4\z\stackrel{\times 2}{\to}\dots$ is non-zero and it is not contained in the corresponding $\cu\subset \cu'=K(\bu)$ since $\z/4\z[i]\perp M$ for all $i\in \z$ (and it follows that $\obj \cu\perp_{\cu'} M$). Note also that $\cu'$ is {\it $\alo$-well-generated} in this case, see Theorem 5.9 of \cite{neeflat}.

Furthermore, this example 
 can be easily generalized using twisted complexes over negative differential graded categories (cf. Remark \ref{rcompar}(\ref{imorexamples})). 

\item\label{icomp} 
Recall that an object $M\in \obj \cu$ is said to be {\it compact} if the functor  $\cu(M,-):\cu\to \ab$ respects coproducts.
Certainly, if $\cp$ consists of compact objects then it is class-negative if and only if it is negative (see Remark \ref{rcompar}(\ref{izneg})). Negative classes of compact objects yield an important family of examples for our theorem (note here that the clumsy  argument used for the proof of the corresponding particular case of our theorem in \cite[Theorem 4.5.2]{bws} can be used only if we have a negative {\bf set} of compact generators, and passing to a larger universe does not help here).

 Moreover, one can obtain plenty of examples as follows:  
an arbitrary set of compact objects in $\cu$ 
generates a smashing weight structure  $w$ (see Theorem 5 of \cite{paucomp} or Remark 4.2.2(1) of \cite{bpure});
weight structures obtained this way are said to be {\it compactly generated}.  
  Next, $w$ restricts to the weight structure class-generated by $\cu_{w=0}$ on the localizing subcategory of $\cu$ that is class-generated by $\cu_{w=0}$ (see 
  Corollary \ref{classgws}(1)).

\item\label{itop} Now we describe an important example  to our corollary in detail. 

We take $\cu=\shtop$ (the stable homotopy category) and $\cp=\{S\}$ (the sphere spectrum). Then $S$ is compact, $\cp$ is negative in  $\shtop$ 
 and also generates $\shtop$ as its own localizing subcategory. Hence $\cp$ is also class-negative; thus  there exists a weight structure class-generated by $\cp$ on $\shtop$. 
We also obtain that $w$ is generated by $\cp$.
Hence $w$ coincides with the {\it spherical} weight structure $w\sph$ on $\shtop$; this weight structure was studied in \S4.6 of \cite{bws},   \S4.2 of \cite{bwcp}, and \S4.2 of \cite{bkwn}. 
Thus we obtain $\shtop_{w\sph\le 0}=[\cup_{i\le 0}\cp[i]\brab$. 

Moreover,  we obtain a filtration of $\shtop$ by $\lan \cp\ral$ (and by $\shtop_{\cp}^{\be}$) such that  $w\sph$ restricts to its levels. 
Proposition 3.2.10 of \cite{marg} easily implies that for  any $\be > \alz$ a spectrum $P$ belongs to $\shtop_{\cp}^{\be}$ (recall that this 
 class equals $\obj(\lan \cp \ra^{\be})$ if $\be$ is regular)  if and only if all the homotopy groups of $P$ are of cardinality less than $\be$; one should only take into account 
 that  {\it crude Postnikov towers} mentioned in loc. cit. are defined in terms of 
 the corresponding {\it weak homotopy colimits}, and apply Proposition 3.1.4 of ibid. (cf. Definition \ref{dcoulim} below). 
  Furthermore, the category  $\lan \cp\ra^{\alz}$ 
consists of   finite spectra; cf. \S4.6 of \cite{bws}.

	Lastly we note that part 2 of our corollary gives the following nice weight-exactness criterion for $w\sph$:  an exact functor $F:\cu\to\du$ as in the corollary (i.e., $F$ respects coproducts and $\du$ is endowed with a smashing weight structure $v$) is left (resp., right) weight-exact  if and only if $F(S)\in \du_{v\le 0}$ (resp. $F(S)\in \du_{v\ge 0}$). 
	
Furthermore, all these statements can be naturally extended to the setting equivariant stable homotopy categories, i.e., we take $\cu=SH(G)$, where $G$ is a compact Lie group; see	\S4.1 of \cite{bwcp} for more detail.

\item\label{iperf}
Recall that a class $\cp\subset \obj \cu$ is said to be  {\it perfect} (cf.  Remark \ref{rpperf} below) if 
the morphism class $$\operatorname{Null}(\cp)= \{f\in \mo(\cu);\ \cu(P,-)(f)=0\ \forall P\in \cp\}$$ is closed with respect to coproducts (cf. Remark 4.3.5 of \cite{bpure} where this definition of perfect classes is shown to be closely related to other versions of this notion, which essentially originates from \cite{neebook}); $\cp$ is said to be {\it countably perfect} if the class 
$\operatorname{Null}(\cp)$ is closed with respect to countable coproducts. Certainly, any class of compact objects (of $\cu$) is perfect.

Now, Theorem 4.3.1 of \cite{bpure} says that any countably perfect {\bf set} of objects generates some weight structure; this result appears to be the most general statement on the construction of unbounded weight structures.\footnote{Recall that all bounded weight structures are $\alz$-generated by negative classes of objects; see Proposition \ref{pbw}(\ref{iextcub}) and Remark \ref{rcompar}(\ref{izneg}).} So it is an actual question which weight structures constructed by means of the results of the current paper cannot be obtained from that theorem.

Firstly we note that  loc. cit. cannot be applied if $\cu$ is not smashing (in contrast to Theorem \ref{tvneg}); moreover, it appears to be quite difficult to extend the arguments used in the proof of loc. cit. to the non-smashing setting.\footnote{ In particular, loc. cit. does not yield the aforementioned restrictions of  $w\sph$ to $\lan \{S\}\ral$; yet (in  this particular case) the existence of these restrictions appears to be rather easy anyway.} 

Another way to obtain examples that are not perfectly generated is to consider "big ascending systems" 
 of triangulated categories. In particular, if $\{\cu_i:\ i\in I\}$ is a proper class of non-zero smashing  triangulated categories that are generated by 
 class-negative 
 classes $\cp_i\subset \obj \cu_i$  then one can take the category $\cu$ whose objects are those families $(M_i:\ M_i\in \obj \cu_i)$ 
such that $\{i\in I;\ M_i\neq 0\}$ is (only) a set. This $\cu$ is easily seen to be 
generated by the corresponding class-negative 
 class $\cp$ as its own localizing subcategory; yet it is easily seen not to be generated by any {\bf set} of objects (as its own localizing subcategory). 
Thus the corresponding weight structure $w$ is not generated by any set of objects either. 
 One may say that 
 Corollary \ref {classgws} can be used for treating weight structures on "really big" triangulated categories 
 in contrast to Theorem 4.3.1 of \cite{bpure}. 
Note also that no version of the pseudo-perfectness  assumption on $\cu$ (see Remark \ref{rpperf} below) 
 is sufficient to make the proof of loc. cit. work.

\item\label{ir4} 
Our calculation of $\cu_{w=0}$ for a class-generated 
 weight structure $w$  
yields that $w$ coincides with the weight structure class-generated by some $\cp\subset \cu_{w=0}$ (certainly, this is the case if and only if $\cu=\lan \cp \rab$) if and only if any element of  $\cu_{w=0}$ is a retract of a coproduct of a family of elements of $\cp$. 
Hence it is rather reasonable to ask when one can choose a {\bf set} $\cp$ satisfying this condition. 
 So we recall the following statement: a set $\cp$ satisfying our conditions exists whenever $\cu$ is generated by some set of its objects as its own localizing subcategory (see 
 Proposition 2.3.2(9) of \cite{bwcp}).\footnote{Moreover, the obvious $\al$-generated case of this statement is also valid; see 
 Proposition 2.3.4(3) of ibid.} 
\end{enumerate}
\end{rema}

Let us also mention an obvious statement on (co)homological functors that respect coproducts; it will be applied in other papers.

We recall that an additive covariant (resp. contravariant) functor from $\cu$ into $\au$ is said to be {\it homological} (resp. {\it cohomological}) if it converts distinguished triangles into long exact sequences.

\begin{lem}\label{lcc}
Assume that $\cu$ is endowed with a weight structure $w$ that is class-generated by a class $\cp$.

Let $H$ be a homological (resp., cohomological) functor from $\cu$ into an abelian category $\au$ that respects coproducts (resp. converts them into products).

Then $H$ annihilates $\cu_{w\le 0}$ (resp. $\cu_{w\ge 0}$) if an only if it kills $\cp[i]$ for all $i\le 0$ (resp. $i\ge 0$).  
\end{lem}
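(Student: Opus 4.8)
The plan is to deduce everything from the defining minimality property of the smashing extension-closure. By Definition~\ref{dwso}(\ref{idcgen}) the hypothesis that $w$ is class-generated by $\cp$ means precisely that $\cu_{w\le 0}=[\cup_{i\le 0}\cp[i]\brab$ and $\cu_{w\ge 0}=[\cup_{i\ge 0}\cp[i]\brab$, and by the convention recalled in \S\ref{snotata} the right-hand sides are the \emph{smallest} extension-closed smashing subclasses of $\obj\cu$ containing the indicated shifts. I would treat the homological assertion first and then run the cohomological one by the symmetric argument.

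The "only if" implication is immediate. Since $w$ is class-generated by $\cp$ we have $\cp\subset \cu_{w\le 0}\cap\cu_{w\ge 0}=\cu_{w=0}$, whence $\cp[i]\subset \cu_{w=i}\subset\cu_{w\le 0}$ for every $i\le 0$ (and, dually, $\cp[i]\subset\cu_{w\ge 0}$ for every $i\ge 0$). Thus if $H$ annihilates $\cu_{w\le 0}$ it certainly kills all the $\cp[i]$ with $i\le 0$.

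For the converse I would introduce the class $\mathcal{K}=\{M\in\obj\cu:\ H(M)=0\}$ and verify that it is an extension-closed smashing subclass of $\obj\cu$. It contains $0$ (as $H$ is additive) and is strict (as $H$ is a functor). Extension-closedness follows from $H$ being homological: for a distinguished triangle $A\to C\to B\to A[1]$ with $A,B\in\mathcal{K}$ the induced exact sequence $H(A)\to H(C)\to H(B)$ reads $0\to H(C)\to 0$, forcing $H(C)=0$. Closedness under $\cu$-coproducts follows from $H$ respecting them: a coproduct of a small family of objects of $\mathcal{K}$ is sent by $H$ to a coproduct of zero objects, hence to $0$. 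Since the hypothesis gives $\cup_{i\le 0}\cp[i]\subset\mathcal{K}$, minimality of $[\cup_{i\le 0}\cp[i]\brab$ yields $\cu_{w\le 0}\subset\mathcal{K}$, i.e. $H$ annihilates $\cu_{w\le 0}$.

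The cohomological case runs identically with the same class $\mathcal{K}=\{M:\ H(M)=0\}$: now the contravariant exactness of $H$ produces the three-term sequence $H(B)\to H(C)\to H(A)$, so extension-closedness goes through verbatim, while the hypothesis that $H$ converts coproducts into products makes $\mathcal{K}$ closed under $\cu$-coproducts (a product of zero objects is $0$); applying minimality to $[\cup_{i\ge 0}\cp[i]\brab$ gives the claim for $\cu_{w\ge 0}$. There is no genuinely hard step here; the only points to watch are the direction of the long exact sequence in the cohomological case and the coproduct-versus-product bookkeeping, both of which are routine.
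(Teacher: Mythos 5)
Your proof is correct and follows the same route as the paper's: the "only if" direction comes from the containment $\cup_{i\le 0}\cp[i]\subset\cu_{w\le 0}$ given by Definition~\ref{dwso}(\ref{idcgen}), and the "if" direction from observing that the kernel class of $H$ is smashing and extension-closed and invoking the minimality in the definition of $[\,\cdot\,\brab$. You merely spell out the routine verifications (and the unneeded observation $\cp\subset\cu_{w=0}$) that the paper leaves implicit.
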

\begin{proof}
The statement is very easy. 

Recall that the class $\cu_{w\le 0}$ (resp. $\cu_{w\ge 0}$) contains  $\cup_{i\le 0}\cp[i]$ (resp. $\cup_{i\ge 0}\cp[i]$; see Definition \ref{dwso}(\ref{idcgen})); thus we obtain the "only if" statement. 

Next, our assumptions on $H$ yield that the class $\{M\in \obj \cu:\ H(M)=0\}$ is smashing and extension-closed. Thus recalling Definition \ref{dwso}(\ref{idcgen}) once again we obtain the "if" implication.
 
\end{proof}

\begin{rema}\label{rccal}
Obviously, the $\al$-generated version of this statement is valid as well. 
\end{rema}

\section{On weight-exact localizations}\label{sloc}

This section is dedicated to those localization functors $\pi:\cu\to \cu'$ that are weight-exact (with respect to some $(w,w')$; we will say that $w$ {\it descends} to $\cu'$ if such a weight structure $w'$ exists). We start with some statements concerning weight-exact functors that are essentially surjective on objects. Next we prove Theorem \ref{tloc} that includes both an if and only if criterion for the weight-exactness of $\pi$ (and so, for the existence  of the corresponding $w'$) and a construction of a vast family of weight-exact localizations (that contains the families considered in \cite{bososn}). 

In \S\ref{ssheart}  we calculate the heart of the weight structure $w'$ constructed by means of the aforementioned theorem. These statements are natural generalizations of the corresponding results of  \cite{bososn} (and the remaining "calculational" statements automatically extend to our setting also); in particular, $\hw'$ can be obtained from $\hw$ by means of a certain universal construction that mentions additive categories only. 

In \S\ref{sgenloc} we discuss the relation of weight-exact localizations to {\it generalized universal localizations} of rings. 

\subsection{
Criteria for  weight-exactness of  localizations}\label{scrit}

We start from the following general statement.

\begin{pr}\label{pdesc}
Let $\cu$ and $\cu'$ be triangulated categories endowed with weight structures  $w$ and $w'$, respectively; let $F:\cu\to \cu'$ be an exact functor that is essentially surjective on objects.

1. Then $F$  is weight-exact with respect to $(w,w')$ if and only if $w'=(\kar_{\cu'}(F(\cu_{w\le 0})), \kar_{\cu'}(F(\cu_{w\ge 0})))$.

2. Moreover, if this is the case then we also have $\cu'_{[m,n]}=\kar_{\cu'}(F(\cu_{[m,n]}))$ for any $m\le n \in \z$.

3. Furthermore, if $F$ is weight-exact and respects coproducts of less than $\al$ objects (resp., all small coproducts)  and $w$ is $\al$-smashing (resp. smashing) then $w'$ also is.

4.  Assume that $F$ is weight-exact; let $G:\cu'\to \du$ be an  
exact functor, where $\du$ is a triangulated category endowed with a weight structure $v$, and assume that $G\circ F$ is weight-exact (with respect to $(w,v)$). Then $G$ is weight-exact (with respect to $(w',v)$) as well. 

\end{pr}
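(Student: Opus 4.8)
The plan is to prove the four parts of Proposition \ref{pdesc} in order, since later parts build on earlier ones. For part 1, the ``only if'' direction requires showing that weight-exactness of $F$ forces the two halves of $w'$ to be the Karoubi-closures of the images of $\cu_{w\le 0}$ and $\cu_{w\ge 0}$. Weight-exactness immediately gives the inclusions $\kar_{\cu'}(F(\cu_{w\le 0}))\subset \cu'_{w'\le 0}$ and $\kar_{\cu'}(F(\cu_{w\ge 0}))\subset \cu'_{w'\ge 0}$, using that $\cu'_{w'\le 0},\cu'_{w'\ge 0}$ are Karoubi-closed. For the reverse inclusions I would use the orthogonality characterization from Proposition \ref{pbw}(\ref{iort}): to show $\cu'_{w'\ge 0}\subset \kar_{\cu'}(F(\cu_{w\ge 0}))$, take any $N'\in \cu'_{w'\ge 0}$, use essential surjectivity to write $N'\cong F(N)$ for some $N\in \obj\cu$, take a weight decomposition of $N$, apply $F$, and check that the two pieces land where they should; the key point is that $\kar_{\cu'}(F(\cu_{w\ge 0}))$ and $\kar_{\cu'}(F(\cu_{w\le 0}))$ already satisfy all the weight structure axioms (orthogonality following from weight-exactness since $F(\cu_{w\le 0})\perp F(\cu_{w\ge 0})[1]$ is inherited, and weight decompositions being produced by applying $F$ to weight decompositions in $\cu$), so Proposition \ref{pbw}(\ref{iuni}) forces equality. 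For the ``if'' direction, once $w'$ is given by those formulas, weight-exactness of $F$ is immediate.

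For part 2, I would again use that $F$ sends a weight decomposition to a weight decomposition: given $M'\cong F(M)\in \cu'_{[m,n]}$, I want to realize it up to retract as $F$ of an object in $\cu_{[m,n]}$. The inclusion $\kar_{\cu'}(F(\cu_{[m,n]}))\subset \cu'_{[m,n]}$ is clear from part 1 and Karoubi-closedness. For the reverse, take $M'\in \cu'_{[m,n]}$, lift to $F(M)$, apply Proposition \ref{pbw}(\ref{iwd0}) after first replacing $M$ by an appropriate truncation: more precisely, I would take $w_{\ge m}M$ and then $w_{\le n}(w_{\ge m}M)\in \cu_{[m,n]}$, and argue that $M'$ is a retract of $F$ of this object using the functoriality of weight decompositions together with Proposition \ref{pbw}(\ref{iwdmod}).

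Part 3 is the smashing compatibility. Here I would argue that $\cu'_{w'\ge 0}=\kar_{\cu'}(F(\cu_{w\ge 0}))$ by part 1, and that this class is $\al$-smashing (resp. smashing): given fewer than $\al$ objects in it, each is a retract of $F$ of something in $\cu_{w\ge 0}$; their coproduct in $\cu'$ is then a retract of $F$ applied to the corresponding coproduct in $\cu$ (which exists and lies in $\cu_{w\ge 0}$ since $w$ is $\al$-smashing), using that $F$ respects these coproducts and that retracts are preserved by coproducts. Part 4 is a short formal deduction: by part 1 applied to $F$, the classes $\cu'_{w'\le 0}$ and $\cu'_{w'\ge 0}$ are generated up to retracts by $F(\cu_{w\le 0})$ and $F(\cu_{w\ge 0})$; since $G\circ F$ is weight-exact, $G(F(\cu_{w\le 0}))\subset \du_{v\le 0}$ and $G(F(\cu_{w\ge 0}))\subset \du_{v\ge 0}$, and because $\du_{v\le 0},\du_{v\ge 0}$ are Karoubi-closed, $G$ carries the Karoubi-closures into them too, giving weight-exactness of $G$.

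The main obstacle I anticipate is the reverse inclusion in part 1, i.e. verifying that $\kar_{\cu'}(F(\cu_{w\le 0}))$ and $\kar_{\cu'}(F(\cu_{w\ge 0}))$ genuinely form a weight structure (so that uniqueness can be invoked), rather than just two classes with the right orthogonality. The delicate point is producing weight decompositions for \emph{every} object of $\cu'$, not only those strictly in the image of $F$; essential surjectivity only gives objects isomorphic to images, but since all the relevant classes are strict and Karoubi-closed this suffices, and the weight decomposition of $M'\cong F(M)$ is obtained by transporting $F$ of a weight decomposition of $M$ along the isomorphism. I would make sure to state explicitly that $F(\cu_{w\le 0})\perp F(\cu_{w\ge 0})[1]$ follows because orthogonality in $\cu$ is detected by $\cu(-,-)$ and $F$ need not be full, so this inherited orthogonality actually requires the target orthogonality coming from $w'$ being a weight structure in the ``only if'' direction --- a subtlety worth handling carefully.
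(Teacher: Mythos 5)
Your proposal is correct and follows essentially the same route as the paper: part 1 via the uniqueness statement of Proposition \ref{pbw}(\ref{iuni}) applied to the candidate couple (whose orthogonality is inherited from that of $w'$ and whose weight decompositions are obtained by applying $F$ to decompositions in $\cu$), part 2 via two successive weight truncations (you merely perform them in the opposite order), and parts 3 and 4 exactly as in the text. The one step to tighten is the second truncation in part 2: after showing that $M'$ is a retract of $F(w_{\ge m}M)$, the passage to $F(w_{\le n}w_{\ge m}M)$ is not a direct application of Proposition \ref{pbw}(\ref{iwdmod}) to $M'$ (that statement produces a truncation of $M'$ itself, not of $F(w_{\ge m}M)$); one should either factor the coretraction $M'\to F(w_{\ge m}M)$ through $F(w_{\le n}w_{\ge m}M)$ using $M'\perp F(w_{\ge n+1}w_{\ge m}M)$, or, as the paper does, first verify that the intermediate truncation already lies in $\cu'_{[m,n]}$ via the splitting of the image triangle.
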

\begin{proof}
1. If  $w'=(\kar_{\cu'}(F(\cu_{w\le 0})), \kar_{\cu'}(F(\cu_{w\ge 0})))$ then $F$ is weight-exact by definition. 

 Conversely, if $F$ is weight-exact with respect to $(w,w')$ then we certainly have $\kar_{\cu'}(F(\cu_{w\le 0}))\subset \cu'_{w'\le 0}$ and  $\kar_{\cu'}(F(\cu_{w\ge 0}))\subset \cu'_{w'\ge 0}$. According to Proposition \ref{pbw}(\ref{iuni}) it remains to verify that 
 $(\kar_{\cu'}(F(\cu_{w\le 0})), \kar_{\cu'}(F(\cu_{w\ge 0})))$ is a weight structure on $\cu'$ as well. Now, the orthogonality axiom for this couple follows from that for $w'$, whereas the remaining axioms are obvious (in particular, one can take $\cu'$-weight decompositions "coming from $\cu$"; cf. the proof of assertion 2).

2. Certainly, $\kar_{\cu'}(F(\cu_{[m,n]}))\subset \cu'_{[m,n]}$.
The proof of the converse inclusion is a simple application of the axioms of weight structures. 

For $N\in \cu'_{[m,n]}$ and $M\in \obj \cu$ such that $F(M)\cong N$ we apply $F$ to an $n$-weight decomposition of $M$ to obtain a $w'$-$n$-decomposition distinguished triangle
$$F(w_{\ge n+1}M)[-1]\to F(w_{\le n}M)\to N\to F(w_{\ge n+1}M).$$ Since $N\perp F(w_{\ge n+1}M)$, we obtain that 
 $F(w_{\le n}M)\cong N\bigoplus F(w_{\ge n+1}M)[-1]$.
Since $F(w_{\ge n+1}M)[-1]$ is a retract of $F(w_{\le n}M)$, we have $F(w_{\ge n+1}M)\in \cu'_{w'=n+1}$. Thus  $F(w_{\le n}M)\in \cu_{[m,n]}$ and it suffices to verify 
that $N'=F(w_{\le n}M)$ is a retract of an element of $F(\cu_{[m,n]})$.

Now we apply an "almost dual" argument to $N'$. For $M'=w_{\le n}M$ we take an $m-1$-weight decomposition of $M$ and apply $F$ to it to obtain the following triangle: $F(w_{\le m-1}M')\to N'\to F(w_{\ge m}M')\to F(w_{\le m-1}M')[1]$. Since $F(w_{\le m-1}M')\perp N'$, $N'$ is a retract of $F(w_{\ge m}M')$. It remains to recall that $w_{\ge m}M'\in \cu_{[m,n]}$ according to  Proposition \ref{pbw}(\ref{iwd0}).

3. We should prove that $\cu'_{w'\ge 0}$ is closed with respect to coproducts of less than $\al$ objects (resp., all coproducts). Since $\cu'_{w'\ge 0}=\kar_{\cu'}(F(\cu_{w\ge 0}))$, this statement follows from the corresponding property of $w$ (note that a coproduct of retracts of $F(M_i)$ for $M_i\in \cu_{w\ge 0}$ is a retract of $\coprod F(M_i)$). 

4. Obvious from the description of $w'$ given by assertion 1.
\end{proof}

\begin{rema}\label{rdesc}
Part 1 of our proposition  certainly means that there is at most one weight structure $w'$ such that $F$ is weight-exact; this weight structure exists whenever  $(\kar_{\cu'}(F(\cu_{w\le 0})), \kar_{\cu'}(F(\cu_{w\ge 0})))$ is a weight structure on $\cu'$.

In the latter case we will say that $w$ {\it descends} to $\cu'$, and the corresponding $w'$ is the {\it descended} weight structure. 

Below we will be interested in the case where $F$ is some Verdier localization functor (certainly, $F$ is surjective on objects in this case).

We will usually use the following notation: $\du$ is a full triangulated subcategory of $\cu$, $\cu'=\cu/\du$, $\pi:\cu\to \cu'$ is the localization functor,  $w'$ denotes the couple $(\kar_{\cu'}(\pi(\cu_{w\le 0})), \kar_{\cu'}(\pi(\cu_{w\ge 0})))$ whether this is a weight structure on $\cu'$ or not.

Some more comments to our proposition are given in Remark \ref{rrdesc} below.

\end{rema}
 
Now we prove our first theorem about weight-exact localizations.

\begin{theo}\label{tloc}
Assume that $w$ is a weight structure on $\cu$, and  $\du$ is its full triangulated subcategory such that the Verdier localization $\cu'=\cu/\du$ exists (i.e., all its Hom-classes are sets; see Remark \ref{rclass}(3)). 

1. 
  Then $w$ descends to 
	 a weight structure $w'$ on  $\cu'$ (see Remark \ref{rdesc}) if and only if  for any $N\in \obj \du$ there exists a $\du$-distinguished triangle
 \begin{equation}\label{ewwd}
D(N)=(LN \stackrel{i}{\to} N\stackrel{j}{\to} RN\to LN[1])
\end{equation}
such that $i$ factors through (an element of) $\cu_{w\le 0}$ and $j$ factors through $\cu_{w\ge 0}$. 

2. The assumption of assertion 1 is fulfilled whenever  for any $N\in \obj \du$ there exists a distinguished triangle 
\begin{equation}\label{ewwdu}
L_{\du}N \stackrel{i}{\to} N\stackrel{j}{\to} R_{\du}N\to L_{\du}N[1]\end{equation}
with 
$L_{\du}N\in \obj \du\cap \cu_{w\le 0}$ and $R_{\du} N\in \obj \du\cap \cu_{w\ge 0}$. 

If this condition is fulfilled then we will say that there exist {\it weak 
$w$-decompositions inside $\du$}.

3. 
Weak $w$-decompositions exist inside $\du$  if either

 (i) there exists a regular (infinite) cardinal $\al$ such that $w$ is $\al$-smashing (and so, $\cu$ also is) and $\du=\lan B \ral$, where 
 $B=B_0\cup B_1 \cup B_2$, and $B_0$ consists of cones (in $\cu$) of a class $S$ of $\hw$-morphisms, elements of $B_1$ are left $w$-degenerate, and elements of $B_2$ are right $w$-degenerate. 

(ii) $\cu$ and $w$ are smashing and $\du=\lan B \rab$  for $B$ as above.\footnote{Once again, the definition and notation used in assertion 3 can be found in \S\ref{snotata} and Definition \ref{dwso}.} 

Moreover, in case (i) the functor $\pi$ respects coproducts of less than $\al$ objects, and in case (ii) $\pi$ respects all coproducts (thus the category $\cu'$ in these cases is closed with respect to coproducts of the corresponding sorts), and the descended weight structure $w'$ is $\al$-smashing (resp., smashing).
\end{theo}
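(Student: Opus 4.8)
My plan is to apply Proposition \ref{pdesc}(1): $w$ descends to $\cu'$ precisely when the couple $w'=(\kar_{\cu'}(\pi(\cu_{w\le 0})),\kar_{\cu'}(\pi(\cu_{w\ge 0})))$ is a weight structure. For this couple the Karoubi-closedness and semi-invariance axioms hold by construction, and the weight decomposition axiom follows from surjectivity of $\pi$ on objects (lift $M'=\pi M$ and apply $\pi$ to a weight decomposition of $M$); so the whole content is the orthogonality $\pi(\cu_{w\le 0})\perp\pi(\cu_{w\ge 1})$, and I would prove it equivalent to the factorization criterion. For the \emph{forward} implication, assume $w'$ is a weight structure and take $N\in\obj\du$. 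Picking a weight decomposition $w_{\le 0}N\to N\to w_{\ge 1}N$ in $\cu$ and applying $\pi$ (with $\pi N=0$) gives $\pi(w_{\ge 1}N)\cong\pi(w_{\le 0}N)[1]$, an object lying in both $\cu'_{w'\ge 1}$ and $\cu'_{w'\le 0}$; orthogonality in $\cu'$ makes its identity vanish, so $\pi(w_{\ge 1}N)=0$ and hence $\id_{w_{\ge 1}N}$ factors through an object $D\in\obj\du$. Writing $C=w_{\ge 1}N$, so that $C$ is a retract of $D$ and $D\cong C\oplus\tilde C$, I would set $RN=D$ with $j\colon N\to C\hookrightarrow D$ (factoring through $C\in\cu_{w\ge 1}\subset\cu_{w\ge 0}$) and take $LN=\co(j)[-1]\in\obj\du$; the identification $LN\cong w_{\le 0}N\oplus\tilde C[-1]$ then shows $i\colon LN\to N$ factors through $w_{\le 0}N\in\cu_{w\le 0}$, producing the triangle $D(N)$.

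\textbf{The backward direction --- the main obstacle.} This is the technical heart. Given $X\in\cu_{w\le 0}$, $Y\in\cu_{w\ge 1}$ and a morphism $\phi\colon\pi X\to\pi Y$, I would represent $\phi$ by a roof $X\xleftarrow{s}W\xrightarrow{a}Y$ whose denominator $s$ has cone $N\in\obj\du$ (so $W\xrightarrow{s}X\xrightarrow{q}N$ is a triangle) and try to show $\pi(a)=0$, i.e. that $a$ factors through $\du$. Applying the criterion to $N$ yields $LN\xrightarrow{i}N\xrightarrow{j}RN$; composing the connecting map of the roof with $i$ and using that $i$ factors through $\cu_{w\le 0}$ together with the orthogonality axiom of $w$ in $\cu$, one checks that $a$ extends across the octahedral comparison map $W\to W_1$ with cone $LN$, where $W_1=\co(jq)[-1]$. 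This replaces the roof by an equivalent one whose denominator has cone $RN$ and whose numerator $X\to RN$ factors through $\cu_{w\ge 0}$. The remaining and most delicate step is a dual peeling on the target side, using the factorization of $j$ through $\cu_{w\ge 0}$, to force the numerator into $\du$ and thereby reduce $\phi$ to an honest morphism $X'\to Y'$ with $X'\in\cu_{w\le 0}$, $Y'\in\cu_{w\ge 1}$, which then vanishes by the orthogonality of $w$. I expect the bookkeeping of this second reduction, and the verification that the octahedral maps are the expected composites such as $\partial\circ i$, to be where the real difficulty lies.

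\textbf{Parts 2 and 3 and the smashing/coproduct clause.} Part 2 is immediate: a weak decomposition $L_\du N\to N\to R_\du N$ with $L_\du N\in\obj\du\cap\cu_{w\le 0}$ and $R_\du N\in\obj\du\cap\cu_{w\ge 0}$ is itself a triangle $D(N)$, with $i$ and $j$ factoring through $\cu_{w\le 0}$ and $\cu_{w\ge 0}$ via the identities of $L_\du N$ and $R_\du N$. For Part 3 I would put $P=\obj\du\cap\cu_{w\le 0}$ and $Q=\obj\du\cap\cu_{w\ge 0}$; these are extension-closed by Proposition \ref{pbw}(\ref{iext}), and $\al$-smashing (resp. smashing) since $\cu_{w\le 0}$ is closed under coproducts (Proposition \ref{pbw}(\ref{icopr})), $\cu_{w\ge 0}$ is $\al$-smashing by hypothesis (Definition \ref{dwso}(\ref{idsmash})), and $\du$ is $\al$-localizing (resp. localizing); moreover $P\perp Q[1]$ by the orthogonality axiom of $w$. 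Then $P\star Q$ --- which is exactly the class of objects of $\du$ possessing weak $w$-decompositions inside $\du$ --- is extension-closed and $\al$-smashing (resp. smashing) by Proposition \ref{pstar}(1,2). Every shift of a generator lies in $P$ or $Q$: shifts of $B_1$ and non-negative shifts of $B_0$ lie in $Q$, while shifts of $B_2$ and negative shifts of $B_0$ lie in $P$ (using $B_0\subset\cu_{[0,1]}$ from Proposition \ref{pbw}(\ref{i01})). Hence $P\star Q$ contains $\cup_{i\in\z}B[i]$ and therefore all of $[(\cup_{i\in\z}B[i])\bral=\obj\du$, so weak decompositions exist throughout $\du$. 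Finally $\pi$ respects the relevant coproducts by Proposition \ref{pneem}(2), so the descended $w'$ (which exists by Parts 1 and 2) is $\al$-smashing (resp. smashing) by Proposition \ref{pdesc}(3).
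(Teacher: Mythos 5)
Parts 2 and 3 of your proposal match the paper's argument (the $P\star Q$ device via Proposition \ref{pstar}(1,2), the placement of the shifts of $B_0,B_1,B_2$ into $P$ or $Q$ using Proposition \ref{pbw}(\ref{i01}), and the final appeal to Propositions \ref{pneem}(2) and \ref{pdesc}(3)). The problem is in Part 1, in the direction where you assume $w'$ exists and construct $D(N)$. Your key claim is that $\pi(w_{\ge 1}N)\cong\pi(w_{\le 0}N)[1]$ lies in $\cu'_{w'\ge 1}\cap\cu'_{w'\le 0}$ and hence vanishes. But $\pi(w_{\le 0}N)[1]$ lies in $\cu'_{w'\le 0}[1]=\cu'_{w'\le 1}$, not in $\cu'_{w'\le 0}$: shifting by $[1]$ enlarges the "$\le$"-class (Definition \ref{dwso}(\ref{id2}) plus semi-invariance). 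So the object only lands in $\cu'_{w'=1}$, and orthogonality gives nothing. The claim $\pi(w_{\ge 1}N)=0$ is in fact false in the situations this theorem is designed for: take $N=\co(s)$ for $s\colon A\to B$ an $\hw$-morphism, with $w_{\le 0}N=B$ and $w_{\ge 1}N=A[1]$; the isomorphism $\pi(A)[1]\cong\pi(B)[1]$ merely says $\pi(s)$ is invertible, and $\pi(A)$ is typically nonzero (e.g., in the universal-localization setting of Remark \ref{rloc}(2), where $\pi({}_RR)$ corresponds to a nonzero localized ring). Everything you build on this ($RN=D$, $LN\cong w_{\le 0}N\oplus\tilde C[-1]$, etc.) therefore collapses. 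The paper's construction is genuinely different: take a $-1$-weight decomposition $w_{\le -1}N\to N\to w_{\ge 0}N\stackrel{s}{\to}w_{\le -1}N[1]$, observe that the \emph{boundary morphism} $s$ becomes invertible in $\cu'$ (its cone is $N[1]$), and then show that the canonical comparison $f\colon w_{\ge 0}N\to w_{\ge 1}N$ of Proposition \ref{pbw}(\ref{icompl}) satisfies $\pi(f)=\bigl(\pi(f)\circ\pi(s)^{-1}\bigr)\circ\pi(s)=0$, because $\pi(f)\circ\pi(s)^{-1}$ is a $\cu'$-morphism from $\cu'_{w'\le 0}$ to $\cu'_{w'\ge 1}$. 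It is the morphism $f$, not the object $w_{\ge 1}N$, that dies in $\cu'$; its factorization through an object of $\du$ supplies $RN$, and the commutative square with the weight decomposition then shows $i$ factors through $w_{\le 0}N$.

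For the other direction of Part 1 (orthogonality from the existence of $D(N)$), your outline is headed the right way but you explicitly leave the decisive "second reduction" unresolved, so as written it is not a proof. For the record, the paper's completed argument needs no dual peeling on the target side: with $E=\co(s)[-1]\in\obj\du$ and $p\colon X\to E[1]$ the cone map, the factorization of $j$ through $\cu_{w\ge 0}$ kills $j[1]\circ p$ (a map from $X\in\cu_{w\le 0}$ into $\cu_{w\ge 1}$), so $p$ lifts through $i[1]$ to $p'\colon X\to LE[1]$; the octahedron then replaces the roof by one with denominator cone $RE\in\obj\du$, and the factorization of $i$ through $\cu_{w\le 0}$ kills the composite $f\circ e\circ b$ into $Y\in\cu_{w\ge 1}$, forcing $f\circ e$ to factor through $X$ and hence to vanish since $\cu(X,Y)=\ns$.
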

\begin{proof}
Recall that we write $\pi$ for the localization functor $\cu\to \cu'$.

1. Assume that 
$D(N)$ exists for any $N\in \obj \du$. Then we check that the couple $w'$ described in Remark \ref{rdesc} is a weight structure on $\cu'$.

Obviously, $\cu'_{w'\ge 0}[1]\subset \cu'_{w'\ge 0}$, and $\cu'_{w'\le 0}\subset \cu'_{w'\le 0}[1]$;  both $\cu'_{w'\le 0}$ and $\cu'_{w'\ge 0}$ are Karoubi-closed in $\cu'$ by construction. Moreover, any object of $\cu'$ possesses a "decomposition with respect to $w'$" obtained by applying $\pi$ to a weight decomposition of its preimage in $\cu$.

Hence it remains to prove the orthogonality axiom. It certainly suffices to verify that  $\pi(\cu_{w\le 0})\perp_{\cu'} \pi(\cu_{w\ge 1})$.

So, let $\phi \in \cu' (X,Y)$, where $X \in \cu_{w\le 0},\ Y \in \cu_{w\ge 1}$ (here we use the same notation for objects of $\cu$ and for their images in $\cu'$). By the theory of Verdier localizations, $\phi$ can be presented as a composition $f\circ s^{-1}$, where $f\in \cu(T,Y), s\in \cu(T,X)$ for some object $T$ of $\cu$ such that $E=\co(s)[-1] \in \obj\du$.
Denote by $p$ the cone map $X \to E[1]$ and by $q$ the composed morphism $E \to Y$. Consider a 
decomposition of the type (\ref{ewwd}) for $E$: 
$$D(E)=(LE \stackrel{i}\to E \stackrel{j}\to RE \to LE[1]).$$

The composition $j[1]\circ p$ is zero since it factors through a morphism from $X\in \cu_{w\le 0}$ 
 to an element of $\cu_{w\ge 1}$ (by our assumptions on $D(E)$); hence $p$ factors as $i[1] \circ p'$, where $p' \in \cu(X,LE[1])$. 
Applying the octahedron axiom of triangulated categories to the  corresponding commutative triangle we obtain the existence of a distinguished triangle $T'\stackrel{e}{\to}T\to RE\to T'[1]$, where 
$LE\stackrel{b}{\to} T' \to X\stackrel{p'}{\to} LE[1]$ is a distinguished triangle corresponding to $p'$; moreover, 
 $e\circ b$ factors through $i$.
Since $RE\in \obj \du$, we can present $\phi = f\circ s^{-1}$ as $ (f\circ e)\circ (s\circ e)^{-1}$ in the category $\cu'$. 
Since the morphism $f\circ e\circ b$ factors through $i$,  it is zero by
our assumptions on $D(E)$ (since it factors through a morphism from $\cu_{w\le 0}$ into $Y\in \cu_{w\ge 1}$); hence 
 $f\circ e$ factors through $X$. Since $\cu(X,Y) = \ns$, 
 $f\circ e=0$ in $\cu$; 
  thus $\phi=0$.
 
Conversely, assume  that $w'$ is  a weight structure on $\cu'$. 
We take a $-1$-weight decomposition of an object $N$ of $\du\subset \cu$, i.e., a choice of the distinguished triangle (\ref{ewd}) for $M=N$ and $m=-1$. 
Then the "boundary" morphism $w_{\ge 0}N \stackrel{s}\to (w_{\le -1}N)[1]$ in this triangle    is invertible in $\cu'$ (since its cone is killed by $\pi$). 
Choose some $w_{\ge 1} N$ and  consider 
 the canonical morphism $w_{\ge 0} N \stackrel{f}\to w_{\ge 1} N$ "compatible with $g=\id_N$" as given by Proposition \ref{pbw}(\ref{icompl}) (cf. part \ref{ifilt} of that proposition). Then $f\circ s^{-1}=0$ 
in $\cu'$ by the orthogonality 
 axiom for $w'$.
Hence $\pi(f)=0$; thus $f$ factors through some object of $\du$. We take this object for $RN$ and complete the corresponding composition map $j:N\to w_{\ge 0} N\to RN$ to a distinguished triangle that we denote as in (\ref{ewwd}).  

It remains to verify that $i$ factors through $\cu_{w\le 0}$. 
Completing the commutative triangle $N\to RN\to w_{\ge 0}N$ to a commutative diagram
$$\begin{CD}
LN@>{i}>> N  @>{j}>> RN \\
@VV{}V                    @VV{\id_N}V                    @VV{}V \\
w_{\le 0}N@>{}>> N  @>{}>> w_{\ge 1} N\end{CD}
$$
we obtain that $i$ factors through $w_{\le 0}N\in \cu_{w\le 0}$ indeed. 

2. 
It certainly suffices to verify that the distinguished triangle (\ref{ewwdu}) satisfies the conditions for $D(N)$ (see (\ref{ewwd})), which is obvious. 

3. We should prove that the class  $D=(\obj \du \cap \cu_{w\le 0})\star_{\du} (\obj \du \cap \cu_{w\ge 0})$ (where $\star$ is the operation defined in Proposition \ref{pstar}(1))) equals $\obj \du$. 

In case (i) this class is extension-closed and $\al$-smashing according to Proposition \ref{pstar}(1,2). Since $B_0=\co(S)$ is a subclass of $\cu_{[0,1]}$ (see Proposition \ref{pbw}(\ref{i01})),  we have $B[i]\subset (\obj \du \cap \cu_{w\le 0})\cup  (\obj \du \cap \cu_{w\ge 0})\subset D$ for any $i\in \z$ (since the corresponding fact is valid for $B_j$ for $j=0,1,2$). Hence $D=\obj \du$ indeed. 

In case (ii) we note that $\lan(\cup_{i\in \z}\cp[i])\rab$ is the union of the classes $\lan(\cup_{i\in \z}\cp[i])\ral$ for all (infinite regular)  $\al$  (cf. 
the proof of Corollary \ref{classgws}(1)); hence $D=\obj\du$ in this case as well. 

Lastly, the functor $\pi$  respects the coproducts in question according to 
Proposition \ref{pneem}(2) (cf. also Corollary 3.2.11 of 
 \cite{neebook}); hence $w'$ is $\al$-smashing (resp., smashing) according to Proposition \ref{pdesc}(3). 

\end{proof}

\begin{rema}\label{rloc}
1. Part 3 of our theorem 
 generalizes  Theorem 4.2.3(1) of \cite{bososn}, where the case $B=B_0$ and $\al=\alz$ was considered. 
We also do not assume that $\cu$ is small (as was done in loc. cit.); yet this does not really affect the proof. 

In 
 \S\ref{ssheart} 
 below we will give  certain descriptions of  $\hw'$ in the settings 
of parts 2 and 3 of our theorem; these statements and their proofs are closely related to 
   Theorem 4.2.3(4--5) of \cite{bososn}. 

Note also that part 2 of that theorem is a particular case of our  Proposition \ref{pdesc}(2).

2. One of the main settings considered in ibid. was as follows: $\cu=K^b(\operatorname{FGProj}(R))$ is the homotopy category of bounded complexes (where $R$ is a unital ring and $\operatorname{FGProj}(R)$ is the category of finitely generated projective left $R$-modules), $w$ is the stupid weight structure  on $\cu$, $B=B_0=\co(S)$ 
 for $S$ being a set of morphisms in $\operatorname{FGProj}(R)$. 
  It was proved that the endomorphism ring of $\pi({}_RR)$ (${}_RR$ is $R$ considered as a left $R$-module) in $\cu'$ gives the so-called non-commutative or universal localization of $R$ with respect to $S$ (a-la Cohn; cf. \S\ref{sgenloc} below). 

3. Now we make some remarks related to part 1 of our theorem. 

 Applying Proposition \ref{pbw}(\ref{icompl}) we immediately obtain that a distinguished triangle of the form (\ref{ewwd}) satisfies the factorizability assumptions 
 in question if and only if $i$ factors through any fixed choice $w_{\le 0}N$ and $j$ factors through a (fixed) choice of $w_{\ge 0}N$.
Another equivalent formulation is that for any $Y \in \cu_{w\ge 1}, Y' \in \cu_{w\le -1}$, and morphisms $f\in \cu(N,Y),\ f'\in \cu(Y',N)$, the compositions $f\circ i$ and $j\circ f'$ are zero.

Next, we don't have any examples of weight-exact localizations $\pi:\cu\to \cu/\du$ for which we can prove that   weak $w$-decompositions inside $\du$ (as in part 2 of the theorem) do not exist. However, in \S\ref{sgenloc} we will describe a reasonable candidate for such an example in terms of so-called generalized universal localizations of rings.

Another interesting question is whether given any triangulated subcategory $\du$ of $\cu$ the class $\{N\in \obj \cu\}$ such that a triangle $D(N)$  (see (\ref{ewwd})) exists is extension-closed. 

\end{rema}

Some more comments to our theorem can be found in \S\ref{sex} below.

\subsection{Calculation of hearts for weight-exact localizations}\label{ssheart}
To calculate the functor $\hpi:\hw\to \hw'$ under the assumptions of Theorem \ref{tloc}(3) we prove that 
 it is "essentially determined" by the $w$-bounded subcategories of $\cu$ and $\du$.


\begin{pr}\label{pwede} 
Assume that inside a full strict triangulated subcategory $\du$ of $\cu$ there exist weak $w$-decompositions (in the sense of Theorem \ref{tloc}(2); here $w$ is a weight structure on $\cu$).

1. Then the full subcategories $\du_+$,  $\du_-$, and $\du_b$ of $\du$, whose object classes equal  $\obj \du\cap \obj \cu_+$, $\obj \du\cap \obj \cu_-$, and $\obj \du\cap \obj \cu_b$, respectively,  are triangulated.

2. Assume that the localization functor $\pi:\cu\to \cu'=\cu/\du$ exists. Then the restrictions of $\pi$ to   $\cu_+$,  $\cu_-$, and $\cu_b$, respectively, yield full embeddings into $\cu'$  of the Verdier localizations  $\cu_+/\du_+$,  $\cu_-/\du_-$, and $\cu_b/\du_b$, respectively. 

3. $\du_b$ is strongly generated (see \S\ref{snotata}) by $\obj\du \cap\cu_{[0,1]}$. Moreover, if $w$ restricts to $\du$ then $\du_b$ is strongly generated by $\obj\du \cap\cu_{w=0}$.


4. Denote by $S$ the morphism class $\{s\in \mo(\hw):\ \co(s)\in \obj\du\}$ and 
 take an infinite regular $\al$ such that $w$ is $\al$-smashing and $\du$ is an $\al$-smashing subcategory of $\cu$.

Next, for any set  
 $P\subset \cu_{w=0}$ we take $\cu^{P}=\lan P\ral$ and $\du^P=\lan \co(S\cap \mo(\cu^{P}))\ral$. Then these categories are essentially small, $w$ restricts to a weight structure $w^P$ on $\cu^P$, and $w^P$ descends to a weight structure $w'{}^P$ on $\cu'{}^P=\cu^P/\du^P$. Moreover, we have a natural direct system of weight-exact functors $i^P:\cu'{}^P\to \cu'$ (for $P$ running through all subsets of $\cu_{w=0}$) that gives 
a full embedding of the $2$-colimit category $\inli_P \cu'{}^{P}_{b}$ into $ \cu'_b$.


\end{pr}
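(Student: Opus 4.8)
The plan is to establish the four assertions of part 4 in order, devoting the bulk of the work to the concluding $2$-colimit claim. Essential smallness is a cardinality count: $\cu^P=\lan P\ral$ is produced from the set $P$ by iterating shifts, coproducts of fewer than $\al$ objects, cones and retracts, so it has only a set of isomorphism classes; hence $S\cap\mo(\cu^P)$ is a set, so $\co(S\cap\mo(\cu^P))$ is a set and $\du^P=\lan\co(S\cap\mo(\cu^P))\ral$ is essentially small as well. That $w$ restricts to $\cu^P$ I would deduce from Theorem \ref{tvneg}(\ref{itvn1}): as $w$ is $\al$-smashing the class $\cu_{w=0}$ is $\al$-negative (Proposition \ref{pbw}(\ref{ivneg})), whence so is its subclass $P$, so $P$ $\al$-generates an $\al$-smashing weight structure $w^P$ on $\cu^P$; since $[\cup_{i\le 0}P[i]\bral\subset\cu_{w\le 0}$ and $[\cup_{i\ge 0}P[i]\bral\subset\cu_{w\ge 0}$, the embedding $\cu^P\to\cu$ is weight-exact, so Proposition \ref{prestr}(1) identifies $w^P$ with the restriction of $w$. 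For the descent of $w^P$ I would note that (as $w^P$ restricts $w$, so $\hw^P=\hw\cap\cu^P$) the class $S\cap\mo(\cu^P)$ consists of $\hw^P$-morphisms; thus in the notation of Theorem \ref{tloc}(3) we have $\du^P=\lan B_0\ral$ with $B_0=\co(S\cap\mo(\cu^P))$ and $B_1=B_2=\varnothing$, so case (i) gives weak $w^P$-decompositions inside $\du^P$ and part 1 of that theorem yields the descended $\al$-smashing weight structure $w'{}^P$ on $\cu'{}^P$.

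The direct system comes from the evident inclusions: for $P\subset P'$ one has $\cu^P\subset\cu^{P'}$ and $\du^P\subset\du^{P'}$ (since $S\cap\mo(\cu^P)\subset S\cap\mo(\cu^{P'})$), inducing $\cu'{}^P\to\cu'{}^{P'}$, and likewise $\cu^P\subset\cu$, $\du^P\subset\du$ induce $i^P\colon\cu'{}^P\to\cu'$. Weight-exactness of all these functors I would obtain from Proposition \ref{pdesc}(4): the localization $\pi^P\colon\cu^P\to\cu'{}^P$ is weight-exact and essentially surjective, and its composite with $i^P$ is $\pi$ restricted to $\cu^P$, which is weight-exact; hence $i^P$ is, and the same reasoning covers the transition functors.

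The real content is the final claim. First I would record two assembly facts, $\cu_b=\inli_P\cu^P_b$ and $\du_b=\inli_P\du^P_b$ along fully faithful transition functors. By Proposition \ref{pbw}(\ref{iextcub}) every object of $\cu_b$ is an iterated extension of finitely many shifted objects of $\cu_{w=0}$, so putting those heart objects into $P$ places it in $\cu^P_b=\cu^P\cap\cu_b$; on morphisms nothing moves, each $\cu^P_b$ being full in $\cu_b$. For $\du_b$ one first identifies its generators: Proposition \ref{pwede}(3) gives that $\du_b$ is strongly generated by $\obj\du\cap\cu_{[0,1]}$, and Proposition \ref{pbw}(\ref{i01}) together with the definition of $S$ shows this class to be exactly $\{\co(s)\colon s\in S\}$; enlarging $P$ to contain the finitely many sources and targets of the $S$-morphisms building a given $N\in\du_b$ places $N$ in $\du^P\cap\cu^P_b=\du^P_b$ (note that $S\cap\mo(\cu^P)$ is precisely the class of $\hw^P$-morphisms with cone in $\du^P$).

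Finally I would show that Verdier localization commutes with these filtered colimits. A morphism of $\cu'$ between objects $\pi(M_X),\pi(M_Y)$ with $M_X,M_Y\in\cu_b$ is, via the full embedding $\cu_b/\du_b\hookrightarrow\cu'$ of Proposition \ref{pwede}(2), represented by a roof $M_X\xleftarrow{s}Z\xrightarrow{f}M_Y$ lying inside $\cu_b$ with $\co(s)\in\du_b$; by the assembly facts the finitely many heart objects and $S$-morphism cones out of which $Z$ and $\co(s)$ are built all lie in some $\cu^{P''}_b$ and $\du^{P''}_b$ with $P''\subset\cu_{w=0}$, so the roof already represents a morphism of $\cu'{}^{P''}_b$, giving fullness; the analogous enlargement applied to a (bounded, again by Proposition \ref{pwede}(2)) witness of a vanishing relation gives faithfulness. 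Hence $\inli_P\cu'{}^P_b\to\cu'_b$ is fully faithful, and since the $i^P$ are weight-exact its essential image lies in $\cu'_b$, yielding the asserted full embedding. The residual bookkeeping — that objects of $\cu'{}^P_b$ which are only retracts of images of $\cu^P_b$ (Proposition \ref{pdesc}(2)) cause no difficulty, because $\cu'_b$ is Karoubi-closed (Proposition \ref{pbw}(\ref{iextb})) and the Hom-comparison is compatible with splitting idempotents — I expect to be routine. The main obstacle is exactly this localization-commutes-with-colimit step: one must guarantee that every morphism and every vanishing in the localized bounded category is already visible over a single set $P\subset\cu_{w=0}$, which is precisely what the finiteness of weight filtrations and Proposition \ref{pwede}(2,3) are meant to secure.
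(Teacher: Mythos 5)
Your proof is correct and follows essentially the same route as the paper's: essential smallness, restriction of $w$ to $\cu^P$ via Theorem \ref{tvneg} combined with Proposition \ref{prestr}(1), descent via Theorem \ref{tloc}(3(i)), weight-exactness of $i^P$ from the defining diagrams, and the final full-embedding claim obtained by combining part 2 (the commutative square whose rows are full embeddings) with part 3, Proposition \ref{pbw}(\ref{i01},\ref{iextcub}), and the observation that every set of objects of $\cu_b$ (and hence every roof witnessing a morphism, or a vanishing, in $\cu_b/\du_b$) already lies in $\cu^{P}$ for some set $P\subset \cu_{w=0}$. The only point you gloss over is the case $\al=\alz$, where Theorem \ref{tvneg}(\ref{itvn1}) does not apply verbatim and one needs the Karoubi-closure modification of Remark \ref{rcompar}(\ref{ialz}) --- precisely the parenthetical the paper itself inserts --- while your explicit treatment of the retract issue coming from Proposition \ref{pdesc}(2) in the last step is, if anything, more careful than the paper's.
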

\begin{proof}

1. Obvious (recall that $\cu_+$,  $\cu_-$, and $\cu_b$ are full strict triangulated subcategories of $\cu$).

2. $\pi$ restricts to a full embedding  $\cu_-/\du_-\to \cu'$ according to (the easy) Proposition \ref{plocsub} below. 
 Dualizing this statement we obtain that $\cu_+/\du_+$ embeds into $\cu'$ as well.  Lastly, combining these two statements one obtains that $\pi$ embeds the category $\cu_b/\du_b$ into $\cu'$ as well (recall that $w$ restricts to $\cu_-$ and note that the objects of $\cu_-$ that are bounded below   with respect to this restriction are precisely the objects of $\cu_b$).

3.  This statement can be easily deduced from the simple  Proposition 1.1.9 of \cite{bbd}, and it essentially coincides with Corollary 3.1.4(2) of \cite{bsosnl}.
4.  The category $\cu^{P}$ is essentially small according to Proposition 3.2.5 of \cite{neebook}; hence $\du^{P}$ also is. $w$ restricts to $\cu^P$ according to Theorem \ref{tvneg}(\ref{itvn1},\ref{itvn4}) (along with Proposition \ref{prestr}(1); see also Remark \ref{rcompar}(\ref{ialz}) for the case $\al=\alz$). $w^P$ descends to $\cu'{}^P$ according to Theorem \ref{tloc}(3(i)). Our definitions easily give the existence and weight-exactness of the functor $i^P$; see the 
diagram $$\begin{CD}
 \du^P@>{}>>\cu^P@>{\pi^P}>>\cu'{}^P\\
@VV{}V@VV{}V@VV{i^P}V \\
\du@>{}>>\cu@>{\pi}>>\cu'
\end{CD}$$

Next, applying assertion 2 we obtain that in the commutative square of functors 
$$\begin{CD}
 \cu^{P}_b/\du^{P}_b@>{}>>\cu'{}^P\\
@VV{}V@VV{i^P}V \\
\cu_b/\du_b@>{}>>\cu'
\end{CD}$$
the 
 rows are full embeddings.
 Recalling the definition of Verdier localizations along with our assertion 3 (and Proposition \ref{pbw}(\ref{i01})) we obtain that 
 it remains to verify the following: for any set $Q$ of objects of $\cu_b$ there exists a  {\bf set} $P\subset \cu_{w=0}$ such that $Q\subset \obj \cu^P$. The latter statement immediately follows from Proposition \ref{pbw}(\ref{iextcub}).

\end{proof}

Now we 
 are able (essentially) to calculate the functor $\hpi$. Our arguments are  rather similar to that of \cite{bososn} (and also rely on several results of loc. cit.); yet the conclusions are much more general.

\begin{theo}\label{theart}
Assume that 
a weight structure $w$ on $\cu$ is $\al$-smashing for some regular $\al$ (resp., $w$ is smashing); 
  let   $\du$ be the $\al$-localizing (resp. localizing) subcategory of $\cu$  
 generated by $B=B_0\cup B_1 \cup B_2$, where $B_0$ consists of cones (in $\cu$) of a class $S_0$ of $\hw$-morphisms, elements of $B_1$ are left $w$-degenerate, and elements of $B_2$ are right $w$-degenerate. 
  Suppose that the localization functor $\pi:\cu\to \cu'=\cu/\du$ exists (in the sense of Remark \ref{rclass}(3)).

1. Then for the heart $\hw'$ of the the weight structure $w'$ descended from $w$ to $\cu'$ (see Theorem \ref{tloc}(3)) 
 the corresponding functor $\hpi:\hw\to \hw'$ factors as the composition of the localization $\pi_{\hw}$ of the category $\hw$ by the morphism class $S=\{s\in \mo(\hw):\ \co(s)\in \obj\du\}$ 
  with a full embedding $\hw[S\ob]\to \hw'$.

2. Any morphism in this category $\hw[S\ob]$ can be presented as $gs\ob i$, where $g$, $s$, and $i$ are $\hw$-morphisms, $s\in S$, and $i$ is a coretraction (i.e., it is split injective). 

3. $\hw[S\ob]$ is the target of the universal additive functor $\hw\to \bu $ that makes all elements of $S_0$ invertible and respects coproducts of less than $\al$ objects (resp., all small coproducts; so, a functor satisfying this universality condition exists whenever $\cu'$ does).

4. $\hw[S\ob]$ is isomorphic (in the obvious way) to the category $\hw[S'{}\ob]$, where $S'$ is the closure of $S_0\cup \{\id(M):\ M\in \cu_{w=0}\}$ with respect to coproducts of less than $\al$ morphisms (resp., with respect to all small coproducts).

\end{theo}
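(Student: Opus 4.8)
The plan is to reduce the whole computation to the essentially small, $w$-bounded situation treated in \cite{bososn} by means of Proposition \ref{pwede}, and then to upgrade the bounded conclusions to the $\al$-smashing (resp.\ smashing) setting by passing to a filtered $2$-colimit and taking coproduct closures. For the factorization asserted in part 1, I would first record that the descended $w'$ exists and is $\al$-smashing by Theorem \ref{tloc}(3), and that $\hpi$ inverts $S$: for $s\in S$ the cone $\co(s)$ lies in $\obj\du$, so $\pi(\co(s))=0$ and $\pi(s)$ is a $\cu'$-isomorphism; since the source and target of $s$ lie in $\cu_{w=0}$, their images lie in $\cu'_{w'=0}=\obj\hw'$, whence $\hpi(s)$ is invertible in $\hw'$. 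The universal property of the Gabriel--Zisman localization then produces $\hpi=\bar\pi\circ\pi_{\hw}$ for a unique $\bar\pi\colon\hw[S\ob]\to\hw'$, so that the genuine content of part 1 is that $\bar\pi$ is fully faithful.

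For fullness and faithfulness (and for part 2) I would fix $M,N\in\hw$ and choose a set $P\subset\cu_{w=0}$ with $M,N\in\obj\cu^{P}$ (say $P=\{M,N\}$). By Proposition \ref{pwede}(4) the relevant $\hw'$-morphisms are computed inside the essentially small localization $\cu'{}^{P}=\cu^{P}/\du^{P}$, and Proposition \ref{pwede}(2) identifies its bounded part with $\cu^{P}_{b}/\du^{P}_{b}$. On $\cu^{P}_{b}$ the restriction of $w^{P}$ is \emph{bounded} and $\du^{P}_{b}$ is generated by cones of $\hw^{P}$-morphisms, so this is precisely the setting of \cite{bososn}; I would invoke its heart computation to obtain for each $P$ a full embedding $\hw^{P}[(S^{P})\ob]\hookrightarrow\hw'{}^{P}$ together with the presentation of part 2 (the coretraction $i$ being already produced in the bounded case), where $\hw^{P}=\hw\cap\obj\cu^{P}$ and $S^{P}=S\cap\mo(\cu^{P})$. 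One then checks $\hw=\inli_{P}\hw^{P}$ and $S=\cup_{P}S^{P}$, and that $S^{P}$ is exactly the class of $\hw^{P}$-morphisms with cone in $\du^{P}$. Since localization and passage to hearts commute with filtered colimits, taking the $2$-colimit over $P$ and composing with the full embedding $\inli_{P}\cu'{}^{P}_{b}\hookrightarrow\cu'_{b}$ of Proposition \ref{pwede}(4) yields that $\bar\pi$ is a full embedding and that every morphism of $\hw[S\ob]$ has the form $gs\ob i$; this gives parts 1 and 2.

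Parts 3 and 4 follow by the same colimit together with coproduct bookkeeping. In the bounded case \cite{bososn} also gives that $\hw^{P}[(S^{P})\ob]$ is the target of the universal additive functor inverting $S_{0}\cap\mo(\cu^{P})$ — equivalently, that inverting this set already inverts $S^{P}$. Given an additive $F\colon\hw\to\bu$ inverting $S_{0}$ and respecting coproducts of less than $\al$ objects, I would restrict it to each $\hw^{P}$, factor through $\hw^{P}[(S^{P})\ob]$, and assemble the compatible factorizations into a unique $\bar{F}\colon\hw[S\ob]\to\bu$; it respects the relevant coproducts because the $<\al$-coproducts of $\hw[S\ob]$ are inherited from those of $\hw$ (the class $S$ being closed under such coproducts by Proposition \ref{pneem}(1)). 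This proves the universal property of part 3. For part 4, note $S'\subset S$ (identities and $S_{0}$ lie in $S$, and $S$ is closed under $<\al$-coproducts), giving a comparison functor $\hw[S'{}\ob]\to\hw[S\ob]$; since a coproduct-respecting functor inverts $S_{0}$ if and only if it inverts the coproduct closure $S'$, the plain localization $\hw[S'{}\ob]$ (with its inherited coproducts) satisfies the very universal property of part 3, so $\hw[S'{}\ob]\cong\hw[S\ob]$. The smashing case is obtained throughout by letting $\al$ range over all regular cardinals, exactly as in the proof of Corollary \ref{classgws}(1).

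The main obstacle will not be any single estimate but the organization of the reduction: ensuring that the bounded results of \cite{bososn} apply uniformly to each $(\cu^{P}_{b},\du^{P}_{b})$ and assemble correctly under the filtered $2$-colimit. Concretely, the delicate points are verifying that the morphism class $S^{P}$ cutting out $\du^{P}$ coincides with $S\cap\mo(\cu^{P})$, that the indexing system over subsets $P$ is directed, that localization and hearts commute with its colimit, and that inverting the \emph{set} $S_{0}$ together with coproducts forces inversion of the full class $S$ — this last equivalence being what makes part 4, and hence the $S_{0}$-only universal property of part 3, valid.
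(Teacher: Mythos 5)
Your treatment of parts 1 and 2 follows the paper's route: reduce to the essentially small, $w$-bounded situation via Proposition \ref{pwede}(2,4) and quote Theorem 4.2.3(4), Corollary 2.2.3 and Proposition 3.1.2 of \cite{bososn}; this is fine. The genuine gap is in part 3. To factor a given additive $F\colon\hw\to\bu$ (inverting $S_0$ and respecting $<\al$-coproducts) through $\hw^{P}[(S^{P})\ob]$ you must first know that $F$ inverts $S^{P}=S\cap\mo(\cu^{P})$ --- but that is precisely the point at issue. The bounded universal property of \cite{bososn}, applied to the pieces $(\cu^{P}_{b},\du^{P}_{b})$ supplied by Proposition \ref{pwede}(4), does not give it to you: those pieces are carved out by $\du^{P}=\lan \co(S\cap\mo(\cu^{P}))\ral$, i.e.\ by the class $S$ itself rather than by $S_0$, so the bounded result only says that inverting $S\cap\mo(\cu^{P})$ is universal for that piece. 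Conversely, if you instead generate the pieces by $\co(S_0\cap\mo(\cu^{P}))$ (plus degenerate objects), then a morphism $s\in S\cap\mo(\cu^{P})$ need not have its cone in that smaller subcategory: $\co(s)$ lies in $\du$, which is built from $B$ using $<\al$-coproducts and $S_0$-morphisms that may live outside $\cu^{P}$. Your phrase ``equivalently, that inverting this set already inverts $S^{P}$'' is exactly the nontrivial implication that needs a proof and that your filtered-colimit bookkeeping does not supply.

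The paper closes this gap with an ingredient absent from your proposal: the weak weight complex functor. Given $F$, it forms $G=K_\w(F)\circ t\colon\cu\to K_\w(\bu)$ and observes that the class of objects killed by $G$ is strict, triangulated (via Proposition \ref{pwc}(\ref{iwct})) and closed under the relevant coproducts, and contains $B_0$ (since $F$ inverts $S_0$), $B_1$ and $B_2$ (by Proposition \ref{pwc}(\ref{iwcern})); hence it contains all of $\obj\du$, so $G$ factors through $\pi$ and its restriction to $\hw$ yields the desired factorization of $F$ through $\pi_{\hw}$ (uniqueness coming from part 2). Some argument of this ``global'' nature is needed; once part 3 is established this way, your part 4 goes through essentially as the paper's does.
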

\begin{proof}

It is easily seen that Proposition \ref{pwede}(4) reduces the $\al$-smashing version of our assertions to the case  where $\cu$ is small.
Moreover, this proposition also implies the following: if $\cu$, $w$, and $\du$ are as in the smashing version of our assertions, 
 and for any regular cardinal $\be$ and $P$ being a subset of $\cu_{w=0}$ we write $\cu^{P,\be}$ for $\lan P\ra_{\cu}^{\be}$, $\du^{P,\be}=\lan \co(S\cap \mo(\cu^{P}))\ra^{\be}$, then for the corresponding weight structures $w'{}^{P,\be}$ on $\cu'{}^{P,\be}=\cu^{P,\be}/\du^{P,\be}$ the natural functors $i^P:\cu'{}^{P,\be}\to \cu'$ induce a full embedding of the $2$-colimit $\cu'{}^{P,\be}_b$ into $ \cu'_{b}$ (here we set $(P,\be)\le (P',\be')$ whenever $P\subset P'$ and $\be\le \be'$). Hence to prove this version of  our theorem it also  suffices to consider the case where $\cu$ is small and $\al$-smashing (for some regular $\al$). 
 Thus we can  ignore the existence of  localizations 
 matter.

1. Proposition \ref{pwede} allows us to assume that $w$ is bounded, $\du$ 
 is strongly generated by 
 the set  $\co(S)$, and (respectively) $\al=\alz$. In this case Theorem 4.2.3(4) of \cite{bososn} says that 
the functor $\hpi$ factors as the composition of a certain localization functor $\hw\to \hw[S\ob]_{add}$ with a full embedding. It remains to note that this functor $\hw\to \hw[S\ob]_{add}$ is isomorphic to the localization 
 $\pi_{\hw}: \hw\to \hw[S\ob]$ according to Corollary 2.2.3 of ibid.


2. Immediate from  Proposition 3.1.2 of ibid. (here we use the aforementioned isomorphism $\hw[S\ob] \cong \hw[S\ob]_{add}$; cf.  also Theorem 0.1 of ibid. for the definition of $\hw[S\ob]_{add}$). 

3. Since $\pi$ respects coproducts of less than $\al$ objects (see Proposition \ref{pneem}(2)) and  $\hw$ is closed with respect to $\cu$-coproducts of this sort (see Proposition \ref{pbw}(\ref{ismash})),  the 
localization functor $\pi_{\hw}$   is an additive functor that respects coproducts of less than $\al$ objects and makes all elements of $S_0$ invertible. Thus to check our assertion it suffices to verify that any functor $F:\hw\to \bu$ that fulfils these properties uniquely factors through  $\pi_{\hw}$. 
Moreover, assertion 2 implies that the factorization is unique if exists.

Now the theory of weight complexes (that we will recall below) gives an explicit factorization of $F$ through $\pi_{\hw}$. 

We consider the composition $G=K_\w(F)\circ t:\cu\to K_\w(\bu)$ (see Definition \ref{dkw} and Proposition \ref{pwc}).
We claim that $G$ factors through the localization functor $\pi:\cu\to \cu'$. 
By the universal property of ``abstract'' localizations, it suffices to verify for any $\cu$-distinguished triangle $X\to Y\stackrel{f}{\to} Z\to X[1]$ that $G(f)$
is an isomorphism if $X\in \obj \du$.
According to Proposition \ref{pwc}(\ref{iwct}), 
 it suffices to check
that $G(X)=0$. 
Moreover, Proposition \ref{pwc}(\ref{iwcf},\ref{iwcons},\ref{iwct}) confines us to checking that $G(X)=0$ if $X\in \bu$. Now, the latter fact is given by Proposition \ref{pwc}(\ref{iwcern}) if $X\in B_1$ or $X\in B_2$, whereas for $X\in B_0$ one should 
 apply parts \ref{iwcalc} and \ref{iwct} of the proposition.

Thus the restriction of $G$ to $\hw$ factors through $\pi_{\hw}$, and it remains to note that this restriction equals the composition of $F$ with the embedding $\hw\to K_\w(\hw)$ 
 according to part \ref{iwcalc} of the proposition.

4. The functor $\hw\to \hw[S'{}\ob]$ is additive and respects coproducts of less than $\al$ objects according to (the rather easy) Proposition \ref{addloc}  below. Hence it factors through $\pi_{\hw}$ according to the universality property given by the previous assertion. Since any additive functor from $\hw$ that respects coproducts of this sort and makes elements of $S_0$ invertible also makes all elements of $S'$ invertible, we obtain that $\pi_{\hw}$ factors through this localization $\hw\to \hw[S'^{-1}]$. Hence these two localization functors are isomorphic.

\end{proof}  

\begin{rema}\label{rheart}
\begin{enumerate}
\item\label{irh1} Certainly, 
 for $(\cu,w,\cu',w')$ as in  the theorem the category $\hw'$ 
 equals $\kar_{\cu'}(\hw[S\ob])$ (see Proposition \ref{pdesc}(2)).  Moreover, in the case $\al>\alz$ and in the smashing one the category $\hw'$ is Karoubian 
(since $\cu'$ is so; see  Remark \ref{rkar}(3)); 
hence   $\hw'$ is equivalent to  $\kar(\hw[S\ob])$. 

Moreover, if $\al>\alz$ then  the category $\inli_P \cu'{}^{P}_b$ is easily seen to be equivalent to  $ \cu'_b$ under the assumptions of Proposition \ref{pwede}(4).

\item\label{irh2}
Recall also that Propositions 3.2.2.2 and  3.1.7 of \cite{bososn} (see also the text preceding the latter proposition) give a certain "explicit" description of the category $\hw[S'{}\ob]$. So, we have an explicit description of all morphisms in this localization (that is closely related to part 2 of our theorem) as well as of a sum, a direct sum, and any composition of morphisms presented in this form; the most nasty part of that proposition gives a criterion for two parallel morphisms to be equal.\footnote{This description is closely related to the main results of \cite{gera} and \cite{malc} on so-called non-commutative localizations of rings; see \cite[\S3.2.2]{bososn}.}

Moreover, the description of $\hw[S'{}\ob]$ (and so, also of $\hw'$) becomes much easier if $B_0\subset \cu_{w=0}$ (localizations of this type can be called {\it pure} ones). We will discuss this case and some examples for it in Remark \ref{rrheart} below.

\item\label{irh4}
For an $\al$-smashing additive Karoubian (say, small) category $\au$ one can take $w$ to be the stupid weight structure on the category $\cu=K(\au)$. Then $\au \cong \hw$ and for any $S_0\in \mo(\au)$ 
part 4 of our theorem says that the corresponding functor $\au\to \au[S\ob]$ is a restriction of the localization $\pi:\cu\to \cu'$.
 Thus one obtains a "triangulated method" of calculating the universal functor  described in part 3 of our theorem.

 One can also easily prove that it is actually not necessary to assume that $\au$ is Karoubian in this statement (cf. the proof of  \cite[Theorem 4.2.3(5)]{bososn}).


\item\label{irhcl}
The reduction to the case of a small $\cu$ in the beginning of the proof of our theorem is significantly harder than similar arguments in other 
 parts of the paper. So we describe certain alternatives to this reasoning. Firstly, it appears that the corresponding results of \cite{bososn} (and their proofs) can be generalized to the setting of not necessarily small categories. Secondly, one can certainly "pass to a larger universe". Lastly, instead of Proposition \ref{pwede}(4) one may apply Proposition \ref{prestrc} below (see Remark  \ref{restrc}(\ref{irest2})).

\end{enumerate}
\end{rema}

\subsection{Relation to generalized universal localizations of rings}\label{sgenloc}


We recall Definition 7.1 of \cite{bapa} (cf. also \S15 of \cite{krahomloc} where  generalized universal localization of rings were called  homological localizations).

\begin{defi}
Let $R$ be a ring and $B$ a set of compact objects of $D(R)$. 
Then the {\it generalized universal localization} of $R$ with 
respect to $B$ 
is the initial object in the category of ring homomorphisms $R\to U$ such that $T \otimes^{\mathbb{L}}_R U \cong 0$ for any $T \in B$.
\end{defi}

This notion generalizes Cohn's universal localization as the latter is the universal ring homomorphism such that $\co(s) \otimes^{\mathbb{L}}_R U \cong 0$ for $s$ running through a set of morphisms between finite dimensional projective modules. 

Unlike the Cohn's universal localization, the 
generalized universal localization doesn't always exist (see Example 15.2 in \cite{krahomloc}).

The relation of our constructions to the theory of generalized universal  localizations is given by the following proposition. 

\begin{pr}\label{lochomloc}
Let $R$ be a ring and $B$ a set of objects of $\cu=K^b(\operatorname{FGProj}(R))$ 
such that the stupid weight structure on 
$\cu$  (see Remarks \ref{rstws}(1) and \ref{rloc}(2))  descends to the localization $\cu'$ of $\cu$ by the triangulated subcategory $\du$ strongly generated by $B$. 

Then the generalized universal localization with respect to $B$ exists and is isomorphic to the endomorphisms ring of the image $
\pi({}_RR)\in \obj \cu'$ (${}_RR$ is $R$ considered as a left $R$-module) in this localization. 
\end{pr}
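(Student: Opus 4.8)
The plan is to reduce the statement to the case of Cohn's ordinary universal localization treated in \cite{bososn} (see Remark \ref{rloc}(2)) and then to compare universal properties. Throughout set $\cu=K^b(\operatorname{FGProj}(R))$ and let $w=\wstu$ be the stupid weight structure, so that $\hw\cong \operatorname{FGProj}(R)$ (see Remark \ref{rstws}(1)) and $w$ is bounded; put $S=\{s\in \mo(\hw):\ \co(s)\in \obj\du\}$, which is (essentially) a set since $\operatorname{FGProj}(R)$ is essentially small.

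First I would reformulate the defining vanishing condition. For a ring homomorphism $R\to V$ the extension-of-scalars functor $\Phi_V=-\otimes_R V\colon \cu\to K^b(\operatorname{FGProj}(V))$ is exact, and for a perfect complex $T\in\obj\cu$ one has $T\otimes^{\mathbb{L}}_R V\cong 0$ in $D(V)$ if and only if $\Phi_V(T)\cong 0$ (a bounded complex of finitely generated projective $V$-modules is acyclic precisely when it is contractible). As $\ker\Phi_V$ is thick, $\Phi_V$ kills every $T\in B$ if and only if it kills the whole subcategory $\du$ strongly generated by $B$. Furthermore $\co(s)\otimes^{\mathbb{L}}_R V\cong \co(s\otimes_R V)$ for $s\in\mo(\hw)$, which vanishes if and only if $s\otimes_R V$ is invertible. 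Hence, once we know that $\du$ is strongly generated by the cones $\co(S)$, the category of ring homomorphisms $R\to V$ with $T\otimes^{\mathbb{L}}_R V\cong 0$ for all $T\in B$ coincides, as a category of $R$-rings, with the category of those $R\to V$ that invert every $s\in S$; the initial object of the latter is precisely Cohn's universal localization $R_S$ of $R$ with respect to $S$, which always exists.

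The crucial intermediate claim is thus that $\du$ is strongly generated by $\co(S)$. The inclusion of the subcategory strongly generated by $\co(S)$ into $\du$ is immediate from the definition of $S$. For the reverse inclusion I would feed the descent hypothesis into the heart machinery of \S\ref{ssheart}: by Theorem \ref{tloc}(1) descent provides, for each $N\in\obj\du$, a triangle $D(N)$ of the form (\ref{ewwd}); this is exactly the decomposition input of Proposition \ref{pwede}(3), which (since $w$ is bounded, so that $\du=\du_b$) shows that $\du$ is strongly generated by $\obj\du\cap\cu_{[0,1]}$. By Proposition \ref{pbw}(\ref{i01}) this class consists of the cones of $\hw$-morphisms that lie in $\du$, i.e. of $\co(S)$. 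Thus $\du$ is strongly generated by $\co(S)$, placing us in the Cohn setting $B=B_0=\co(S)$ of \cite{bososn}; Remark \ref{rloc}(2) then identifies $\operatorname{End}_{\cu'}(\pi({}_RR))$ with $R_S$.

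Assembling these facts finishes the proof: by the second paragraph the category of ring homomorphisms inverting $B$ equals the one inverting $S$, whose initial object $R_S$ exists, so the generalized universal localization with respect to $B$ exists and is $R$-isomorphic to $R_S\cong\operatorname{End}_{\cu'}(\pi({}_RR))$. The main obstacle I expect is the intermediate claim that $\du$ is strongly generated by $\co(S)$: one must make sure that the bare descent condition, i.e. the $D(N)$-factorisation of Theorem \ref{tloc}(1), really satisfies the hypothesis of Proposition \ref{pwede}(3), which is phrased in terms of the genuine weak $w$-decompositions of Theorem \ref{tloc}(2); so I would either verify that these two conditions agree in the present bounded situation, or re-run the short argument of Proposition \ref{pwede}(3) directly from the triangles $D(N)$. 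A minor secondary point is the standard bookkeeping $\operatorname{End}_R({}_RR)\cong R\opp$ when passing between $\cu'$-endomorphisms and the ring structure on $R_S$, handled exactly as in \cite{bososn}.
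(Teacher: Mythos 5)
Your reduction breaks down at the intermediate claim that $\du$ is strongly generated by $\co(S)$. You invoke Proposition \ref{pwede}(3), but its hypothesis is the existence of \emph{weak $w$-decompositions inside $\du$} in the sense of Theorem \ref{tloc}(2), i.e.\ triangles $L_{\du}N\to N\to R_{\du}N$ with $L_{\du}N\in\obj\du\cap\cu_{w\le 0}$ and $R_{\du}N\in\obj\du\cap\cu_{w\ge 0}$; what the descent hypothesis gives you via Theorem \ref{tloc}(1) is only the weaker factorization condition on the triangles $D(N)$ of (\ref{ewwd}), where $LN$ and $RN$ are merely objects of $\du$ with no control on their weights. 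These two conditions are not known to be equivalent, and Remark \ref{rloc}(3) together with the remark following Proposition \ref{homephomloc} single out precisely the present bounded setting ($\cu=K^b(\operatorname{FGProj}(R))$ with $\du$ strongly generated by a classical partial tilting module of projective dimension $2$, as in Example 8.1 of \cite{bapa}) as the candidate counterexample: there the stupid weight structure descends, yet the authors suspect that weak $w$-decompositions inside $\du$ do not exist. Your claim also cannot hold in general for a structural reason: if $\du$ were strongly generated by $\co(S)$, then Theorem \ref{tloc}(3(i)) applied with $\al=\alz$ and $B=B_0=\co(S)$ would produce weak $w$-decompositions inside $\du$, so your argument would settle the open question of Remark \ref{rloc}(3) in the bounded case and would identify every generalized universal localization arising from a descent situation with an ordinary Cohn localization $R_S$ — contrary to the point of \S\ref{sgenloc}. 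Neither of your suggested repairs addresses this: the two conditions are not expected to agree even in the bounded situation, and the argument behind Proposition \ref{pwede}(3) genuinely needs the weight truncations of $N$ to lie in $\du$ in order to build a filtration of $N$ with graded pieces in $\obj\du\cap\cu_{[i,i+1]}$.

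Your first two paragraphs (translating $T\otimes^{\mathbb{L}}_R V\cong 0$ into contractibility of $T\otimes_R V$ and using thickness of the kernel) are correct but only yield the easy implication that a ring homomorphism killing $B$ inverts $S$; the converse is exactly what fails without the generation claim. The paper's proof is of a different nature and avoids the issue: for $f:R\to U$ killing $B$, the induced functor $\cu\to K^b(\operatorname{FGProj}(U))$ factors through $\pi$, and then, by the existence of a strong weight complex functor $t_{\cu'}:\cu'\to K^b(\hw')$ (Corollary 3.5 of \cite{sosnwc}, via differential graded enhancements), it factors uniquely through $K^b(\hw')$; the universal property of $\operatorname{End}_{K^b(\hw')}(\pi({}_RR))$ is then read off directly from the uniqueness of this factorization, with no identification of the answer with a Cohn localization.
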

\begin{proof}
Any map of rings $f:R \to U$ induces an exact  functor 
$- \otimes^{\mathbb{L}}_R U: K^b(\operatorname{FGProj}(R)) \to K^b(\operatorname{FGProj}(U))$. 

The assumptions of the theorem 
 say that there is a weight structure $w'$ on 
 $\cu'$ and the localization functor $\pi:
\cu\to \cu'$ is weight-exact. 

Now 
assume that $f$ 
makes objects of $B$ acyclic. By definition of localization, the functor $
\cu \to K^b(\operatorname{FGProj}(U))$ factors through $\pi$. 
By  Corollary 3.5 of \cite{sosnwc} (cf. also \S6.3 of \cite{bws}; note that all these categories are easily seen to admit {\it differential graded enhancements}, see  ibid.) there exists a weight-exact functor $t_{\cu'}:
 \cu' \to K^b(\hw')$ (this is the "strong version" of the weak weight complex functor; see \S\ref{swc} below) and  
moreover the functor $
\cu \to K^b(\operatorname{FGProj(U)})$ factorizes as 
$$
\cu \stackrel
{t_{\cu'}\circ \pi}\to K^b(\hw')\stackrel{q}\to K^b(\operatorname{FGProj}(U))$$ 
uniquely. 
This implies that $f$ factors through the induced map $R \to \operatorname{End}_{K^b(\hw')}(\pi({}_RR))$. 

Moreover, this factorization is unique. Indeed, any unital ring homomorphism $g:\operatorname{End}_{K^b(\hw')}(\pi({}_RR))\to U$ gives rise to a functor $q_g:K^b(\hw') \to K^b(\operatorname{FGProj}(U))$; since $q_g=q$ as we have just proved, we immediately obtain the uniqueness of $g$. 
\end{proof}


Applying Theorem \ref{tloc}(1) along with Remark \ref{rloc}(3) (and Remark \ref{rstws}(1)) we immediately deduce the following statement.

\begin{coro}
Let $R$ be a ring  and $B$ a set of objects of $\cu=K^b(\operatorname{FGProj}(R))$. 
Assume that for any object $N$ of the triangulated subcategory $\du\subset \cu$ strongly generated by $B$ there exists a $\du$-distinguished triangle 
$LN \stackrel{i}{\to} N\stackrel{j}{\to} RN\to LN[1]$ and a choice of stupid truncations $N^{\ge 0}$ and $N^{\le 0}$ (concentrated in cohomological degrees at least zero and at most zero, respectively)
such that $i$ (resp. $j$) factors through the corresponding morphism  $N^{\ge 0}\to N$ (resp. $N\to N^{\le 0}$). 
 

Then the generalized universal localization of $R$ with respect to $B$ exists. 
\end{coro}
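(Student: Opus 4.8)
The plan is to deduce the corollary directly from Proposition~\ref{lochomloc}, whose single hypothesis is that the stupid weight structure $\wstu$ on $\cu=K^b(\operatorname{FGProj}(R))$ descends to the Verdier quotient $\cu'=\cu/\du$ (here $\du$ is the triangulated subcategory strongly generated by $B$). Since $\operatorname{FGProj}(R)$ is essentially small, so is $\cu$, and hence $\cu'$ is a (locally small) category in the sense of Remark~\ref{rclass}(3); thus the only thing I really have to establish is this descent, and for that I would invoke Theorem~\ref{tloc}(1). The whole argument therefore consists in checking that the factorization assumption on the stupid truncations is exactly the factorizability condition appearing in Theorem~\ref{tloc}(1).

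First I would set $w=\wstu$ and recall from Remark~\ref{rstws}(1) that $\cu_{w\le 0}$ (resp.\ $\cu_{w\ge 0}$) consists of the objects homotopy equivalent to complexes concentrated in cohomological degrees $\ge 0$ (resp.\ $\le 0$). The key observation is that the given stupid truncations identify the truncation maps with weight decomposition morphisms: $N^{\ge 0}\in\cu_{w\le 0}$ and $N^{\le 0}\in\cu_{w\ge 0}$, and the short exact sequences of complexes $0\to N^{\ge 0}\to N\to \sigma^{\le -1}N\to 0$ and $0\to \sigma^{\ge 1}N\to N\to N^{\le 0}\to 0$ are precisely weight decompositions of $N$ for $\wstu$ (this is exactly the passage from stupid filtrations to weight decompositions recorded in Remark~\ref{rstws}(1)). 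Consequently $N^{\ge 0}\to N$ is a valid choice of $w_{\le 0}N\to N$, and $N\to N^{\le 0}$ is a valid choice of $N\to w_{\ge 0}N$.

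Given this identification, the hypothesis of the corollary says word for word that for each $N\in\obj\du$ there is a $\du$-triangle $LN\stackrel{i}{\to} N\stackrel{j}{\to} RN\to LN[1]$ with $i$ factoring through a fixed choice of $w_{\le 0}N$ and $j$ through a fixed choice of $w_{\ge 0}N$. By Remark~\ref{rloc}(3) (which is an application of Proposition~\ref{pbw}(\ref{icompl})) this is equivalent to $i$ factoring through an element of $\cu_{w\le 0}$ and $j$ through an element of $\cu_{w\ge 0}$, i.e.\ to the factorizability condition imposed on $D(N)$ in Theorem~\ref{tloc}(1). Hence Theorem~\ref{tloc}(1) yields that $\wstu$ descends to a weight structure $w'$ on $\cu'$, and Proposition~\ref{lochomloc} then gives that the generalized universal localization of $R$ with respect to $B$ exists. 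The only place demanding genuine care is the second paragraph — matching the two stupid truncations to the two halves of the stupid weight structure with the correct shifts — but once the conventions of Remark~\ref{rstws}(1) are unwound this is routine, and everything else is a formal chaining of the cited results.
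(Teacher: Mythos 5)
Your proof is correct and follows exactly the route the paper intends: the paper deduces this corollary by "applying Theorem \ref{tloc}(1) along with Remark \ref{rloc}(3) (and Remark \ref{rstws}(1))" and then feeding the descended weight structure into Proposition \ref{lochomloc}, which is precisely your chain of reasoning, including the correct matching of the stupid truncations $N^{\ge 0}$, $N^{\le 0}$ with the choices of $w_{\le 0}N$ and $w_{\ge 0}N$ under the homological convention. Nothing is missing.
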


\begin{rema}
We don't know whether this condition is necessary. If the 
generalized universal localization of $R$ with respect to class of complexes 
$B$ exists, we do have a sequence of functors 
$$K^b(\operatorname{FGProj}(R)) \to K^b(\operatorname{FGProj}(R))/\du \to K^b(\operatorname{FGProj}(U))$$
The second functor is not an equivalence in general and we don't know whether  there necessarily exists a  descended weight structure on $K^b(\operatorname{FGProj}(R))/\du$.
\end{rema}

However, 
 under certain assumptions we are able to prove that the condition is necessary using the results of \cite{bapa}. 

\begin{pr}\label{homephomloc}
Let $R$ be a ring; 
let $T$ 
 be a  {\it classical partial $n$-tilting (left) $R$-module}\footnote{I.e., $T$ is an $R$-module with  $\du(R)(T,T[i])=0$ for all $i\neq 0$ that admits a projective   finitely generated resolution of length $n\ge 0$ (see also Definition 5.1 of \cite{bapa}).}. 
Assume that the embedding $\langle T \rangle^{\operatorname{cl}} \to D(R)$ admits a left adjoint $\pi$ and  
 $H^i(\pi({}_RR)) = 0$ 
for all $i\neq 0$. 

Then the following holds:

1. 
 The generalized universal localization $R\stackrel{f}\to U$ of $R$ with respect to the set $\{T\}$ exists. 

2. 
Denote by $\du$ the full subcategory of $K^b(\operatorname{FGProj}(R))$ strongly generated 
by $T$. 
Then the functor $K^b(\operatorname{FGProj}(R))/\du \to K^b(\operatorname{FGProj}(U))$ is fully faithful. 
\end{pr}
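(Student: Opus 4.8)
The plan is to transfer the whole picture to the big derived category $D(R)$, where $\cu=K^b(\operatorname{FGProj}(R))$ is the subcategory $D(R)^c$ of compact (perfect) objects and $T$, replaced by a bounded resolution by finitely generated projectives, is a compact object of $\cu$. Since $T$ is compact and $\du$ is the triangulated subcategory of $\cu$ strongly generated by $T$, its image in $D(R)$ generates the localizing subcategory $\langle T\rangle^{\operatorname{cl}}$, and the thick closure $\overline{\du}$ of $\du$ in $\cu$ is the thick subcategory of $D(R)^c$ generated by $T$. Note that $\langle T\rangle^{\operatorname{cl}}=\operatorname{Loc}(T)$ is generated by a single compact object; hence it is smashing, the inclusion $\langle T\rangle^{\operatorname{cl}}\to D(R)$ already has a right adjoint, and the Verdier quotient $D(R)\to D(R)/\langle T\rangle^{\operatorname{cl}}$ is a Bousfield localization with essential image the right orthogonal $(\langle T\rangle^{\operatorname{cl}})^{\perp}$.

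For Part 1 I would feed the hypotheses into \cite{bapa}. The extra assumption that the inclusion also has a left adjoint $\pi$ makes $\langle T\rangle^{\operatorname{cl}}$ reflective; combined with $T$ being a classical partial $n$-tilting module this is exactly the recollement situation studied in loc. cit., and the reflection $\pi({}_RR)$ --- concentrated in degree $0$ by hypothesis, hence an honest $R$-module --- produces a ring $U=\operatorname{End}_{D(R)}(\pi({}_RR))$ together with a ring homomorphism $f\colon R\to U$. The key facts extracted from \cite{bapa} are that $f$ is a homological epimorphism (equivalently $U\otimes^{\mathbb{L}}_R U\cong U$, so that restriction $D(U)\to D(R)$ is fully faithful with essential image $(\langle T\rangle^{\operatorname{cl}})^{\perp}$) and that the kernel of $-\otimes^{\mathbb{L}}_R U\colon D(R)\to D(U)$ is exactly $\langle T\rangle^{\operatorname{cl}}$. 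Since $T\otimes^{\mathbb{L}}_R U\cong 0$ and this kernel is minimal, $f$ is initial among ring maps annihilating $T$, i.e.\ it is the generalized universal localization with respect to $\{T\}$; this gives Part 1.

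For Part 2, the functor $-\otimes^{\mathbb{L}}_R U\colon \cu\to K^b(\operatorname{FGProj}(U))$ kills $T$, hence all of $\du$, and therefore factors through a functor $F\colon \cu/\du\to K^b(\operatorname{FGProj}(U))$. To prove $F$ fully faithful I would, for compact $X,Y\in\cu$, run the chain
\[
\operatorname{Hom}_{K^b(\operatorname{FGProj}(U))}(FX,FY)=\operatorname{Hom}_{D(U)}(X\otimes^{\mathbb{L}}_R U,\,Y\otimes^{\mathbb{L}}_R U)\cong \operatorname{Hom}_{D(R)}\bigl(X,\,(Y\otimes^{\mathbb{L}}_R U)_{R}\bigr),
\]
the last isomorphism being the extension--restriction adjunction. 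Because $f$ is a homological epimorphism with Bousfield kernel $\langle T\rangle^{\operatorname{cl}}$, the restriction $(Y\otimes^{\mathbb{L}}_R U)_R$ is the Bousfield localization of $Y$ away from $\langle T\rangle^{\operatorname{cl}}$, so that $\operatorname{Hom}_{D(R)}(X,(Y\otimes^{\mathbb{L}}_R U)_R)\cong \operatorname{Hom}_{D(R)/\langle T\rangle^{\operatorname{cl}}}(X,Y)$. Finally, as $\cu=D(R)^c$ and $T$ is compact, Neeman's localization theorem provides a fully faithful functor $\cu/\overline{\du}\to D(R)/\langle T\rangle^{\operatorname{cl}}$, whence $\operatorname{Hom}_{D(R)/\langle T\rangle^{\operatorname{cl}}}(X,Y)\cong\operatorname{Hom}_{\cu/\overline{\du}}(X,Y)$; combined with the standard fact that $\cu/\du\to \cu/\overline{\du}$ is itself fully faithful this rewrites the last group as $\operatorname{Hom}_{\cu/\du}(X,Y)$. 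Reading the chain backwards yields full faithfulness of $F$.

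The main obstacle is concentrated in Part 1: everything in Part 2 is formal once one knows that $f\colon R\to U$ is genuinely a homological ring epimorphism whose associated acyclic class is precisely $\langle T\rangle^{\operatorname{cl}}$ and that it is universal among $T$-annihilating ring maps --- and it is exactly here that the existence of the left adjoint $\pi$ (promoting the automatic Bousfield localization $D(R)/\langle T\rangle^{\operatorname{cl}}$ to one of ``ring type''), together with the results of \cite{bapa}, does the essential work. Two routine points must still be checked: that $K^b(\operatorname{FGProj}(R))$ is idempotent complete (so that it equals $D(R)^c$ and Neeman's theorem applies) and that passing from $\du$ to its thick closure $\overline{\du}$ does not change morphisms in the Verdier quotient. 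I would also remark that, a posteriori, full faithfulness of $F$ together with the evident weight-exactness of $-\otimes^{\mathbb{L}}_R U$ for the stupid weight structures shows that $\wstu$ descends to $\cu/\du$, so that the weak $w$-decomposition condition of Theorem \ref{tloc} is automatically satisfied in this tilting setting --- which is the promised ``necessity''.
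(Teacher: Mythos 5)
Your argument is correct and follows essentially the same route as the paper: Part 1 is in substance the citation of the relevant results of \cite{bapa} (the paper quotes Proposition 7.3 of loc.\ cit.\ directly), and Part 2 rests on the equivalence $D(R)/\langle T\rangle^{\operatorname{cl}}\simeq D(U)$ coming from Theorem 6.1 of \cite{bapa} together with Neeman's localization theorem for compact objects (Proposition 4.4.1 of \cite{neebook}). You merely unpack that equivalence through the homological-epimorphism adjunction chain rather than quoting it wholesale, and your two ``routine points'' (idempotent completeness of $K^b(\operatorname{FGProj}(R))$ and invariance of the Verdier quotient under thick closure) are indeed standard and unproblematic.
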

\begin{proof}
The first statement is precisely Proposition 7.3 of \cite{bapa}. 

The second statement follows from the equivalence between condition 1 and 2 in Theorem 6.1 of \cite{bapa}. Indeed, condition 2 of loc. cit. implies that the 
functor $- \otimes^{\mathbb{L}}_R U$ induces an equivalence 
$D(R)/\langle T \rangle^{\operatorname{cl}} \cong D(U)$. Hence Proposition 4.4.1 of \cite{neebook} yields the result. 
\end{proof}

\begin{rema}
In Example 8.1 of  \cite{bapa} 
 a  certain weight-exact localization 
 functor was constructed.
The authors take an explicit classical partial tilting module $T$  over a certain (noetherian) ring $R$ that has projective dimension $2$, and apply Proposition \ref{homephomloc} to show that the  generalized universal localization of $R$ with respect to 
  (a projective resolution) of $T$ exists. 
Now,  
 this proposition implies that the stupid weight structure on   $K^b(\operatorname{FGProj}(R))$ descends  to its localization by the subcategory strongly generated by $\{T\}$. 

The authors  suspect that the corresponding subcategory $\du$ does not admit ("stupid") weak weight decompositions; yet they don't know how to prove this. One can probably deduce this statement from Theorem \ref{theart} (cf. also Remark \ref{rheart}(\ref{irh2})).
 Another interesting question is to find 
 "explicit" decompositions of the type (\ref{ewwd}) for objects of $\du$.  
\end{rema}

\section{Supplements, examples, and applications}\label{suppl}

In this section we recall some definitions and statements that were used in the proofs above, obtain some new results, and discuss examples to the propositions  of previous  sections.

In \S\ref{salmb} we construct a weight-exact right adjoint $\rinf$ to the embedding $\cu\abo\to \cu$ (see Remark \ref{rabo}; our construction uses {\it countable homotopy colimits} as introduced in \cite{bokne}). Next we use it to prove Theorem \ref{tvneg}(\ref{itvn6}) and to prove that $\cu$ "often" equals $\lan \cu_{w=0}\ralo$. Moreover, we study the conditions ensuring that a countably  smashing weight structure $w$ on $\cu$ restricts to an $\alo$-localizing subcategory $\du\subset \cu$.

In \S\ref{sex} we give some more comments and examples to the results of  the previous section. 

In \S\ref{swc} we recall some basics of the theory of weight complexes (that was used in the proof of Theorem \ref{theart}(3)). We also combine this theory with the main results of previous sections to obtain certain "weight complex criteria" for an object $M$ of $\cu$ to belong to a triangulated subcategory of $\cu$ (that is generated by certain "$w$-simple" objects), and ensuring that $M$ belongs to a certain $\du\subset \cu$ if it belongs to $\obj \cu_1\cap\obj \cu_2$.

In \S\ref{slloc}  we prove two simple 
 lemmas on localizations of additive categories.

\subsection{Almost bounded above 
objects as homotopy colimits} 
\label{salmb}

Now we prove a collection of nice properties of almost bounded above objects; 
this gives a proof of Theorem \ref{tvneg}(\ref{itvn6}). Moreover, we will prove that  
$w$ can be easily "recovered from" its restriction to $\cu\abo$ (see Remark \ref{rabo});  
recall that $\cu\abo$ is the $\alo$-localizing subcategory of $\cu$ generated by $\cu_-$).
So we assume that $\cu$ is $\alo$-smashing. 
Our arguments are based on 
{\it countable homotopy colimits} (in triangulated categories) as defined in \cite{bokne}. So we start from recalling some basics on this notion.

\begin{defi}\label{dcoulim}

For a sequence of objects $Y_i$ 
 of $\cu$ for $i\ge 0$ and maps $\phi_i:Y_{i}\to Y_{i+1}$  
 we consider  $D=\coprod Y_i$ and the morphism $a:\oplus \id_{Y_i}\bigoplus \oplus (-\phi_i): D\to D$; 
 denote a cone of $a$ by $Y$. We will write $Y=\hcl Y_i$ and call $Y$ a {\it homotopy colimit} of $Y_i$. 
\end{defi}

The following properties of homotopy colimits (of this sort) are probably well-known; all of them were justified in \S4.1 of \cite{bpure} (cf. also  \S3.5 of \cite{weibook}). 

\begin{rema}\label{rcoulim}
1. These homotopy colimits are not really canonical and functorial in $Y_i$ since the choice of a cone is not canonical. They are only defined up to non-canonical isomorphisms; still this is satisfactory for our purposes.


2.  Let $H$ be a cohomological functor from $\cu$ into abelian groups that converts countable coproducts into products. Then 
the easy Lemma 4.1.3(1,2) of \cite{bpure} essentially gives the following (probably, well-known) statement:
 the  connecting morphisms $Y_i\to Y$  give an epimorphism  $H(Y)\to \prli H(Y_i)$, and it is an isomorphism whenever the maps $H(\phi_i[1])$ are surjective for $i\gg 0$.

In particular, for any $C\in\obj\cu$ 
 the  connecting morphisms give a  surjection $\cu(Y,C)\to \prli \cu(Y_i,C)$. Moreover, if for a compatible system of morphisms $f_i:Y_i\to C$ we consider  any $f:Y\to C$ that "lifts" $(f_i)$ via this surjection then for any $H$ satisfying the aforementioned conditions the map $H(f)$ is isomorphic to $\prli H(f_i)$.

3. Furthermore, if $H':\cu\to \ab$ is a homological functor that respects countable coproducts  then we have   $\inli H'(Y_i)\cong H(Y)$ (see part 3 of the aforementioned lemma). Yet we will not apply this statement below.  
\end{rema}

\begin{theo}\label{tabo}
Assume that $w$ is an $\alo$-smashing weight structure (on $\cu$), and $\al$ is some infinite regular cardinal.
Then the following statements are valid.

\begin{enumerate}

\item\label{iabdj} The embedding $\cu\abo\to \cu$ possesses 
 an exact right adjoint $\rinf$ 
 whose kernel is the subcategory $\cu_{+\infty}$ (recall that $\cu_{+\infty}$ is the subcategory of left $w$-degenerate objects), whereas the embedding $\cu_{+\infty}\to \cu$ possesses an exact left adjoint $\linf$. Moreover, for any $M\in \obj \cu$ there exists a distinguished triangle $\rinf(M)\to M\to \linf(M)\to \rinf(M)[1]$, and it is essentially the unique triangle $LM\to M \to RM\to LM[1]$ such that $\rinf(M)\in \obj \cu\abo$ and $\rinf(M)\in \obj \cu_{+\infty}$. 

\item\label{iahcl}  For any $M\in \obj \cu$ the object $\rinf(M) $ is $\cu$-isomorphic to $\hcl_i w_{\le i}M$; here we take arbitrary choices of  $w_{\le i}M$ for all $i\ge 0$ and connect these objects by the unique morphisms provided by Proposition \ref{pbw}(\ref{ifilt}).

\item\label{iaort}
$\obj \cu \abo=\perpp \obj \cu_{+\infty}$; hence $\obj \cu \abo$ is $\al$-smashing  whenever $\cu$ is.

\item\label{iabext} For the restriction $w\abo$   of $w$ to $\cu\abo$  (see Remark \ref{rabo}(1))   we have $\cu\abo_{w\abo\le 0}=\cu_{w\le 0}$, whereas $\cu_{w\ge 0}$  consists precisely of extensions of elements of $\cu\abo_{w\abo\ge 0}$ by objects of  $\cu_{+\infty}$.

Moreover, the functor $\rinf$ is weight-exact.

\item\label{iabdhcl}
 $\cu\abo_{w\abo\ge 0}$ coincides with the class of all $\hcl Y_i$ with $Y_i\in \obj \cu_-\cap \cu_{w\ge 0}$.

\item\label{iarc} Assume that the class $\obj \cu_{+\infty}$ is $\al$-smashing in $\cu$ 
(in particular, this is the case if $w$ is $\al$-smashing). Then 
 $\rinf$ respects coproducts of less than $\al$ objects.
\end{enumerate}

\end{theo}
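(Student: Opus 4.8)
The plan is to build, for every $M\in\obj\cu$, the candidate decomposition triangle directly out of homotopy colimits and then to feed it into the standard formalism of orthogonal (Bousfield-type) decompositions. Concretely, I would fix choices of $w_{\le i}M$ for $i\ge 0$ together with the unique connecting maps of Proposition \ref{pbw}(\ref{ifilt}), set $\rinf(M):=\hcl_i w_{\le i}M$, and use Remark \ref{rcoulim}(2) to lift the compatible cocone $(w_{\le i}M\to M)$ to a morphism $\mu\colon\rinf(M)\to M$; put $\linf(M):=\co(\mu)$. This at once gives assertion (\ref{iahcl}). Since each $w_{\le i}M\in\cu_{w\le i}\subset\obj\cu_-$ and $\cu\abo$ is a triangulated subcategory closed under countable coproducts containing $\cu_-$ (Remark \ref{rabo}(1)), the homotopy colimit $\rinf(M)$ lies in $\obj\cu\abo$. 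Granting that $\linf(M)\in\obj\cu_{+\infty}$ (the crux, below), the orthogonality $\obj\cu\abo\perp\obj\cu_{+\infty}$ of Remark \ref{rabo}(2) together with the shift-invariance of both classes makes $\cu(A,\rinf(M))\to\cu(A,M)$ an isomorphism for every $A\in\obj\cu\abo$; the usual formalism of such decompositions then yields assertion (\ref{iabdj}) in full (functoriality, exactness, essential uniqueness of the triangle, the two adjunctions, and $\ke\rinf=\cu_{+\infty}$). For assertion (\ref{iaort}) the inclusion $\obj\cu\abo\subseteq\perpp\obj\cu_{+\infty}$ is Remark \ref{rabo}(2), and conversely if $\cu(M,\obj\cu_{+\infty})=\ns$ then $M\to\linf(M)$ vanishes, so $M$ is a retract of $\rinf(M)\in\obj\cu\abo$; as $\cu\abo$ is Karoubi-closed (Corollary \ref{corokar}(2)) we get $M\in\obj\cu\abo$. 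That a left orthogonal is closed under all existing coproducts is automatic, giving the ``hence'' clause.

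The one genuinely delicate point, which I expect to be the main obstacle, is that $\linf(M)\in\obj\cu_{+\infty}$, i.e. $\linf(M)\in\cu_{w\ge l}$ for all $l$. One cannot simply map $Z\in\obj\cu_-$ into $\rinf(M)$ and run a Milnor-sequence argument, because $\cu(Z,-)$ does not commute with the coproducts defining a homotopy colimit. The way around this is to avoid mapping into the homotopy colimit altogether: applying the (exact) homotopy colimit construction to the compatible system of weight decomposition triangles $w_{\le i}M\to M\to w_{\ge i+1}M$ (with the induced connecting maps of Proposition \ref{pbw}(\ref{icompl}), and the constant tower on $M$, whose homotopy colimit is $M$) identifies $\linf(M)$ with $\hcl_i w_{\ge i+1}M$. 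It then suffices to prove the clean statement that a homotopy colimit of a tower whose terms eventually lie in $\cu_{w\ge l}$ again lies in $\cu_{w\ge l}$: indeed $\cu_{w\ge l}$ is closed under countable coproducts (as $w$ is $\alo$-smashing) and under extensions (Proposition \ref{pbw}(\ref{iext})), and $\cu_{w\ge l}[1]\subseteq\cu_{w\ge l}$, so the defining triangle $\coprod_i Z_i\to\coprod_i Z_i\to\hcl_i Z_i\to(\coprod_i Z_i)[1]$ exhibits $\hcl_i Z_i$ as an extension of $(\coprod_i Z_i)[1]$ by $\coprod_i Z_i$, both in $\cu_{w\ge l}$. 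Passing to a cofinal tail for each $l$ (homotopy colimits depend only on cofinal subsystems, as one checks on $\cu(-,C)$ via Remark \ref{rcoulim}(2)) yields $\linf(M)\in\cu_{w\ge l}$ for every $l$. This same ``tower stays in $\cu_{w\ge l}$'' lemma is also the engine for assertion (\ref{iabdhcl}).

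For assertion (\ref{iabext}) I would first record, via Remark \ref{rabo}(1) and Proposition \ref{prestr}(1), that $w\abo$ has halves $\cu_{w\le 0}\cap\obj\cu\abo$ and $\cu_{w\ge 0}\cap\obj\cu\abo$; since $\cu_{w\le 0}\subseteq\obj\cu_-\subseteq\obj\cu\abo$ this gives $\cu\abo_{w\abo\le 0}=\cu_{w\le 0}$ immediately. Weight-exactness of $\rinf$ is then easy: $\rinf$ is (isomorphic to) the identity on $\obj\cu\abo\supseteq\cu_{w\le 0}$, so it is left weight-exact, and for $N\in\cu_{w\ge 0}$ the rotated triangle $\linf(N)[-1]\to\rinf(N)\to N$ exhibits $\rinf(N)$ as an extension of $N$ by $\linf(N)[-1]\in\obj\cu_{+\infty}\subseteq\cu_{w\ge 0}$, whence $\rinf(N)\in\cu_{w\ge 0}\cap\obj\cu\abo=\cu\abo_{w\abo\ge 0}$. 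Thus the canonical triangle realizes each such $N$ as an extension built from $\rinf(N)\in\cu\abo_{w\abo\ge 0}$ and $\linf(N)\in\obj\cu_{+\infty}$, and the converse inclusion holds because $\cu_{w\ge 0}$ is extension-closed and contains both classes (Proposition \ref{pbw}(\ref{iext})). Assertion (\ref{iabdhcl}) follows by combining the tower lemma (for the inclusion $\supseteq$) with the observation that any $N\in\cu\abo_{w\abo\ge 0}$ satisfies $N\cong\rinf(N)=\hcl_i w_{\le i}N$, where by Proposition \ref{pbw}(\ref{iwd0}) we have $w_{\le i}N\in\cu_{[0,i]}\subseteq\obj\cu_-\cap\cu_{w\ge 0}$.

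Finally, for assertion (\ref{iarc}) I would invoke the essential uniqueness from assertion (\ref{iabdj}). Given a family $\{M_j\}$ of cardinality less than $\al$, Proposition \ref{pneem}(1) makes $\coprod_j(\rinf(M_j)\to M_j\to\linf(M_j))$ a distinguished triangle; its outer terms $\coprod_j\rinf(M_j)$ and $\coprod_j\linf(M_j)$ lie respectively in $\obj\cu\abo$ (closed under coproducts of less than $\al$ objects, being a left orthogonal by assertion (\ref{iaort})) and in $\obj\cu_{+\infty}$ (closed under such coproducts by the standing hypothesis of this part). By the uniqueness of the decomposition this is the decomposition triangle of $\coprod_j M_j$, so $\rinf(\coprod_j M_j)\cong\coprod_j\rinf(M_j)$, as required.
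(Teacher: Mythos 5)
Your proposal is correct, and its skeleton — realize $\rinf(M)$ as $\hcl_i w_{\le i}M$, show that the cone of a connecting morphism $\hcl_i w_{\le i}M\to M$ is left $w$-degenerate, and then let Neeman's Bousfield-localization formalism together with the orthogonality $\obj \cu\abo\perp \obj \cu_{+\infty}$ deliver parts (\ref{iabdj})--(\ref{iaort}) — is exactly the paper's. Where you genuinely diverge is at the step you rightly single out as the crux. The paper proves that the cone $C$ of \emph{any} compatible connecting morphism $c$ is left degenerate by applying the virtual $t$-truncations $\tau_{\le l}H_N$ of the representable functors $H_N=\cu(-,N)$: these are cohomological, convert countable coproducts into products, detect membership in $\cu_{w\ge m}$, and become insensitive to the tail of the tower, so $\tau_{\le l}H_N(c)$ is bijective for all $l$ and $N$. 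You instead push the homotopy colimit through the system of weight-decomposition triangles (a $3\times 3$-lemma argument) to identify the cone with $\hcl_i w_{\ge i+1}M$, and then use that $\cu_{w\ge l}$ is closed under countable coproducts, extensions and $[1]$, together with cofinality of sequential homotopy colimits. Both routes work; yours is more elementary in that it avoids virtual $t$-truncations entirely, while the paper's argument has the advantage of applying verbatim to an arbitrary lift of the cocone. Your direct proof of part (\ref{iarc}) via essential uniqueness of the decomposition and Proposition \ref{pneem}(1) replaces the paper's external citations, and your treatment of (\ref{iabext}) (deducing right weight-exactness of $\rinf$ from the rotated canonical triangle and extension-closedness) is more self-contained than the paper's appeal to \cite{bvt}.

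Two small points to tighten. First, the lift $\mu$ of the cocone supplied by Remark \ref{rcoulim}(2) is not unique, and a priori different lifts can have non-isomorphic cones; the $3\times 3$ construction produces one \emph{specific} compatible connecting morphism whose cone is $\hcl_i w_{\ge i+1}M$, so you should take that morphism as the definition of the decomposition triangle (this costs nothing, since the essential uniqueness you would otherwise invoke only becomes available after part (\ref{iabdj}) is established). Second, the invariance of $\hcl$ under passage to a cofinal subtower does not quite follow from Remark \ref{rcoulim}(2) as stated — one needs either the full Milnor sequence (including $\varprojlim^1$) or Lemma 1.7.1 of \cite{neebook}; it is a standard fact, but it should be cited as such rather than derived from the surjection onto $\varprojlim$ alone.
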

\begin{proof}  For $M'=\hcl_i w_{\le i}M$ we   choose some connecting morphism  $c:M'\to M$ compatible with the $i$-weight decomposition morphisms $w_{\le i}M\to M$ (see Remark \ref{rcoulim}(2)).

We claim that a cone $C$ of $c$ is a left $w$-degenerate object of $\cu$. To verify this statement we need certain cohomological functors. So for any $N\in \obj \cu$ we consider the {\it virtual $t$-truncations} of the representable functor $H_N=\cu(-,N)$; in \S2.4 of \cite{bpure} these truncations were denoted by $\tau^{\ge  -n}H_N$ for $n\in \z$, whereas in \S2.1 of \cite{bvtr} the notation $\tau_{\le  n}H_N$ was used. The latter functors are cohomological functors from $\cu$ into $\ab$  (see \S2.5 of \cite{bws}) that send countable coproducts into products (see 
 Remark 2.5.2 of \cite{bpure} or Remark 3.1.3(1) of \cite{bvtr}; recall that $w$ is $\alo$-smashing). Moreover, Proposition 2.1.6 of ibid. (as well as Proposition 2.4.6 of  \cite{bpure})  immediately implies that for $T\in \obj \cu$ and $m\in \z$ we have $T\in \cu_{w\ge m}$ if and only if  $\tau_{\le  l}H_N(T)=\ns$ for all $N\in \obj \cu$ and $l<m$ (we will now apply both implications of this criterion).

Hence the application of  all  $\tau_{\le  l}H_N$ to the connecting morphisms  $ w_{\le i}M\stackrel{c_i}{\to} w_{\le i+1}M\to   M$ 
gives isomorphisms whenever $i<l-1$ (apply  $\tau_{\le  l}H_N$ to the corresponding cones). Thus the homomorphisms $\tau_{\le  l}H_N(c)$ 
 are bijective for all $N$ and $l$; therefore $C$ is left $w$-degenerate indeed. %
Moreover, the definition of $M'$ implies that $M'\in \obj \cu\abo$.

Now recall that  $ \obj \cu\abo\perp \obj \cu_{+\infty}$ according to Remark \ref{rabo}(2). Hence applying  Theorem 9.1.13 of \cite{neebook} we obtain the following: the existence of distinguished triangles $M'\to M\to C\to M'[1]$ with $M'\in \obj \cu\abo$ and $C\in   \obj \cu_{+\infty}$ imply that  {\it a Bousfield localisation functor exists} for the pair  $  \cu\abo\subset \cu$ (see Definition 9.1.1 of ibid.). 
Combining this statement with other results of ibid. (see the proof of the aforementioned theorem and Proposition 9.1.18 of ibid.) we easily obtain 
assertions \ref{iabdj}--\ref{iaort} (note also that the functors $\rinf$ and $\linf$ are exact according to  Lemma 5.3.6 of ibid.). 

\ref{iabext}. $\cu\abo{}_{w\abo\le 0}=\cu_{w\le 0}$ since $w\abo$ is a restriction of $w$ and $\obj \cu \abo$ contains $\cu_{w\le 0}$. Hence both $w$ and $w\abo$ are generated by $\cu_{w\le 0}$ (in the corresponding categories). Hence 
Propositions  2.6(II.3) and 3.2(5) of \cite{bvt} 
  give the desired relationship of $\cu\abo{}_{w\abo\ge 0}$ to $\cu_{w\ge 0}$.

If $M\in \cu_{w\le 0}$ then we certainly have $\hcl w_{\le i}M=M$; hence $\rinf$ is left weight-exact.  Lastly, the description of $\rinf$ reduces the  right weight-exactness of $\rinf$ to  assertion \ref{iabdhcl}. 


\ref{iabdhcl}.  Since the class $\cu\abo_{w\abo\ge 0}$ contains  $\obj \cu_-\cap \cu_{w\ge 0}$, contains countable $\cu$-coproducts of its objects, and the shift $[1]$ restricts to it, the definition of $\hcl Y_i$ implies that  $\hcl Y_i\in \cu\abo_{w\abo\ge 0}$  whenever $Y_i\in \obj \cu_-\cap \cu_{w\ge 0}$.

Conversely, 
assertion \ref{iahcl} implies that any $Y\in \cu\abo_{w\abo\ge 0}$ can be presented as $\hcl w_{\le i}Y$, and it remains to note that  $w_{\le i}Y\in \cu_{[0,i]}$ (see Proposition \ref{pbw}(\ref{iwd0})).

\ref{iarc}. This is also an easy consequence of the theory in \cite[\S9.1]{neebook}; see Remark 1.3.5(4) and Proposition 1.3.4(4) of \cite{bpure} or Proposition 3.4(5) of \cite{bvt} for more detail. 
\end{proof}

These results have some important consequences.

\begin{coro}\label{cabo}
I. Assume that $\cu$ is  $\al$-smashing for some (regular cardinal) $\al\ge \alo$. 

1. Suppose that $w'$ is an $\alo$-smashing weight structure on an $\alo$-localizing subcategory $\cu'$ of $\cu$, and the class $\cu'_{w'=0}$ is $\al$-smashing in $\cu$.
 Then  the classes $\cu'_{[m,n]}$ (for all $m,n\in \z$) and    $\cu'\abo_{w'\abo\ge 0}$ are $\al$-smashing in $\cu$ as well.

2. A class $\cp\subset  \obj \cu$ is $\al$-negative 
if and only if the class $\cp\tal$ consisting of all coproducts of families of elements of $\cp$ of cardinality less than $\al$ is countably negative.

II. Assume that $\cu$ is smashing.

1. Then a  class $\cp\subset  \obj \cu$ is class-negative if and only if the class $\cp\tbig$ consisting of all small coproducts of  elements of $\cp$  is countably negative.

2. Assume that $\cu$ 
is endowed with a $\alo$-smashing 
weight structure $w$, the class $\obj \cu_{+\infty}\subset \obj \cu$ is smashing,  and $\cu=\lan \cp \ra\tbig$, where 
$\cp\subset \obj(\lan \obj \cu_+ \ralo)$ is a set that is  countably perfect (i.e.  the class $\operatorname{Null}(\cp)= \{f\in \mo(\cu);\ \cu(P,-)(f)=0\ \forall P\in \cp\}$ is closed with respect to countable coproducts; see Remark \ref{rcgws}(\ref{iperf})). Then $\cu \abo   =\lan \cu_{w=0} \ralo$.

\end{coro}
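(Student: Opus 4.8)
For the first assertion I would prove by induction on $n-m$ that $\cu'_{[m,n]}$ is $\al$-smashing in $\cu$. The base case $\cu'_{[m,m]}=\cu'_{w'=0}[m]$ is the hypothesis, since shifting commutes with coproducts. For the step, write $\cu'_{[m,n]}=\cu'_{w'=m}\star \cu'_{[m+1,n]}$ (the $\star$-decomposition coming from iterated weight decompositions, cf. Proposition \ref{pbw}(\ref{iextcub})); orthogonality (Proposition \ref{pbw}(\ref{iort})) gives $\cu'_{w'=m}\perp \cu'_{[m+1,n]}[1]$, both factors are extension-closed (Proposition \ref{pbw}(\ref{iext},\ref{iextcub})) and $\al$-smashing in $\cu$ (hypothesis and induction), so Proposition \ref{pstar}(1,2) makes the $\star$-product extension-closed and $\al$-smashing. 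For the second assertion I would invoke Theorem \ref{tabo}(\ref{iahcl}): every $Z\in \cu'\abo_{w'\abo\ge 0}\subset \cu'_{w'\ge 0}$ has the \emph{canonical} presentation $Z=\hcl_i w'_{\le i}Z$ with $w'_{\le i}Z\in \cu'_{[0,i]}$ (Proposition \ref{pbw}(\ref{iwd0})). Given $\{Z_j\}_{j\in J}$ with $|J|<\al$, commuting the $<\al$-coproduct past the homotopy colimit (using that $\hcl$ is a cone of a coproduct map and Proposition \ref{pneem}(1)) yields $\coprod_j Z_j=\hcl_i\coprod_j w'_{\le i}Z_j$; by the first assertion $\coprod_j w'_{\le i}Z_j\in \cu'_{[0,i]}$, so Theorem \ref{tabo}(\ref{iabdhcl}) puts $\coprod_j Z_j$ back in $\cu'\abo_{w'\abo\ge 0}$. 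Regularity of $\al$ guarantees the required countable and $<\al$ coproducts exist and stay inside $\cu'$.

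\textbf{Part I.2.} The "only if" is immediate: $\cu(\coprod_j P_j,-)=\prod_j\cu(P_j,-)$ and $[\cup_{i>0}\cp\tal[i]\bralo\subset[\cup_{i>0}\cp[i]\bral$. For the substantive "if" direction (the case $\al=\alo$ being tautological), apply Theorem \ref{tvneg}(\ref{itvn1}) with the cardinal $\alo$ to the countably negative class $\cp\tal$, obtaining a weight structure $w''$ on $\cu''=\lan\cp\tal\ralo$ that is $\alo$-generated by $\cp\tal$, so $\cu''_{w''\ge 1}=[\cup_{i>0}\cp\tal[i]\bralo$. By Theorem \ref{tvneg}(\ref{itvn3}) and regularity of $\al$ (countable coproducts of $\cp\tal$-elements are again $<\al$-coproducts of $\cp$), the heart $\cu''_{w''=0}$ consists of $\cu$-retracts of elements of $\cp\tal$, and the analogous retract-and-coproduct computation shows $\cu''_{w''=0}$ is $\al$-smashing in $\cu$. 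Thus Part I.1 applies to $(\cu'',w'')$ and makes $\cu''\abo_{w''\abo\ge 0}$, hence $\cu''\abo_{w''\abo\ge 1}$, an $\al$-smashing extension-closed class; since $\cp\subset\cp\tal\subset\cu''_{w''=0}\subset\obj\cu''\abo$ we get $\cup_{i>0}\cp[i]\subset\cu''\abo_{w''\abo\ge 1}$, whence $[\cup_{i>0}\cp[i]\bral\subset\cu''\abo_{w''\abo\ge 1}\subset\cu''_{w''\ge 1}$. As $\cp\tal\perp\cu''_{w''\ge 1}$ by hypothesis, $\cp\perp[\cup_{i>0}\cp[i]\bral$, i.e.\ $\cp$ is $\al$-negative.

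\textbf{Part II.1.} This follows from Part I.2 by letting $\al$ range over all infinite regular cardinals, exactly as Corollary \ref{classgws}(1) follows from Theorem \ref{tvneg}: $\cp$ is class-negative iff it is $\al$-negative for every $\al$, one has $\cp\tbig=\cup_{\al}\cp\tal$, and $[\cup_{i>0}\cp\tbig[i]\bralo=\cup_{\al}[\cup_{i>0}\cp\tal[i]\bralo$ because every $\alo$-cellular object uses only countably many generators and so lies in a single $\cp\tal$-stage. Restricting (resp.\ taking the union of) the orthogonality relations $\cp\tal\perp[\cup_{i>0}\cp\tal[i]\bralo$ over $\al$ then gives both implications.

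\textbf{Part II.2.} The inclusion $\lan\cu_{w=0}\ralo\subset\cu\abo$ is clear since $\cu_{w=0}\subset\cu_-\subset\obj\cu\abo$. Write $\cu_0=\lan\cu_{w=0}\ralo$. Because $\cu\abo$ is the essential image of the coreflection $\rinf$ (Theorem \ref{tabo}(\ref{iabdj})), the reverse inclusion amounts to $\rinf(X)\in\cu_0$ for all $X\in\obj\cu$. Here $\rinf$ is exact and, as $\obj\cu_{+\infty}$ is smashing, respects all coproducts (Theorem \ref{tabo}(\ref{iarc})). First, $\rinf(\cu_+)\subset\cu_0$: for $M\in\cu_{w\ge n}$ one has $\rinf(M)=\hcl_i w_{\le i}M$ by Theorem \ref{tabo}(\ref{iahcl}) with $w_{\le i}M\in\cu_b\subset\cu_0$ (Proposition \ref{pbw}(\ref{iwd0})), so this countable homotopy colimit lies in $\cu_0$. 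The class $\mathcal{D}=\{X:\rinf(X)\in\cu_0\}$ is triangulated and closed under countable coproducts (since $\rinf$ respects them and $\cu_0$ is $\alo$-localizing); as it contains $\cu_+$ it contains $\lan\cu_+\ralo$, hence $\cp$. It remains to upgrade $\cp\subset\mathcal{D}$ to $\mathcal{D}\supset\lan\cp\rab=\cu$, i.e.\ to pass from the countable-coproduct closure to the full localizing subcategory; this is the \emph{main obstacle} and is exactly where the countable perfectness of $\cp$ is indispensable. The difficulty is that the naive cellular construction of an object of $\lan\cp\rab$ attaches \emph{arbitrary} coproducts of shifts of $\cp$ at each stage, whose $\rinf$-images need not lie in the merely $\alo$-smashing $\cu_0$; countable perfectness is precisely the device (as in the construction behind Theorem 4.3.1 of \cite{bpure}, and ultimately resting on the homotopy-colimit technology of \cite[\S9.1]{neebook}) that lets one reach every object of $\lan\cp\rab$ from $\cp$ by a \emph{countable} homotopy-colimit process, so that $\rinf$ — which respects countable homotopy colimits — keeps us inside $\cu_0$. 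Making this replacement of arbitrary by countable coproduct data precise is the crux of the argument.
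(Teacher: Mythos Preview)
Your arguments for Parts I.1, I.2, and II.1 are correct and follow essentially the same route as the paper. Two minor remarks: in I.1 the presentation $Z=\hcl_i w'_{\le i}Z$ is not literally ``canonical'' (homotopy colimits are defined only up to non-unique isomorphism, cf.\ Remark~\ref{rcoulim}(1)), though this does not affect the argument; and in I.2 you are in fact more careful than the paper, explicitly verifying via Theorem~\ref{tvneg}(\ref{itvn3}) and regularity of $\al$ that $\cu''_{w''=0}$ is $\al$-smashing before invoking I.1, whereas the paper takes this for granted.

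Part II.2 has a genuine gap, which you yourself flag. You correctly show that $\mathcal{D}=\{X:\rinf(X)\in\cu_0\}$ is $\alo$-localizing and contains $\cp$, but you then stop at the step of upgrading this to $\mathcal{D}\supset\obj\cu$, saying only that countable perfectness should allow one ``to reach every object of $\lan\cp\rab$ from $\cp$ by a countable homotopy-colimit process'' and that ``making this replacement \dots\ precise is the crux of the argument''. The paper does not derive this step from its own machinery; it \emph{cites} it. The corollary at the end of \cite[\S1]{kraucoh} (with alternatives noted in a footnote: Corollary~2.6 of \cite{modoicogen}, or Theorem~4.3.1 of \cite{bpure} applied to $\cup_{i\in\z}\cp[i]$) says precisely that a countably perfect generating set $\cp$ yields $\cu=\lan\cp\tbig\ralo$. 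With this in hand, the paper applies $\rinf$---which is exact, essentially surjective onto $\cu\abo$, and respects \emph{all} small coproducts by Theorem~\ref{tabo}(\ref{iarc}) (this is where the hypothesis that $\obj\cu_{+\infty}$ is smashing enters)---to obtain $\cu\abo=\lan\rinf(\cp)\tbig\ralo$; finally, weight-exactness of $\rinf$ (Theorem~\ref{tabo}(\ref{iabext})) combined with the identification $\lan\obj\cu\abo_+\ralo=\lan\obj\cu_b\ralo=\lan\cu_{w=0}\ralo$ (the first equality from Theorem~\ref{tabo}(\ref{iabdhcl})) places $\rinf(\cp)$ inside $\lan\cu_{w=0}\ralo$. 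So the missing idea in your proof is a specific external theorem of Krause; you should name and invoke it rather than leave the step as a black box.
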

\begin{proof}

I.1.  Certainly, the class $\cu'_{[j,j]}$ is $\al$-smashing in $\cu$  for any $j\in \z$. Then easy induction on 
 $n-m$ yields that $\cu'_{[m,n]}$ is $\al$-smashing in $\cu$  for any $m,n\in \z$; here one should combine the distinguished triangles provided by Proposition \ref{pbw}(\ref{ifilt}) with Proposition \ref{pneem}(1).\footnote{And it is not necessary to assume that $w'$ is $\alo$-smashing for this part of the argument.} Thus the class $\obj\cu'_-\cap \cu'_{w'\ge 0}=\cup_{i\ge 0}\cu'_{[0,i]}$ is $\al$-smashing also,  
  and applying Theorem \ref{tabo}(\ref{iabdhcl}) we obtain that $\cu'\abo{}_{w'\abo\le 0}$ is $\al$-smashing as well (here we use the obvious fact that $\cu$-homotopy colimits respect $\cu$-coproducts). 

2. 
The "only if" implication is obvious. 
 
Conversely, assume that $\cp\tal$ is countably negative. Denote by $w'$  the weight structure $\alo$-generated by $\cp^{\al}$ on the category $\cu'=\lan \cp \ralo$ (see Theorem \ref{tvneg}(\ref{itvn1})); note that all objects of $\cu'$ are almost $w'$-bounded above by the definition of this notion. 
The previous assertion  implies that the class $\cu'_{w'\ge 0}$ is $\al$-smashing in $\cu$. Hence $\cu'_{w'\ge 0}=[ \cup_{i\ge 0}\cp[i] \bral$ 
 and the orthogonality axiom for $w'$ implies that $\cp\perp [ \cup_{i\ge 1}\cp[i] \bral$ as desired.

II.1. Obvious from assertion I.2 (recall that $[(\cup_{i\ge 1}\cp[i])\brab$ is the union of $[(\cup_{i\in I}\cp[i])\bral$ for all infinite regular  $\al$). 

2. The corollary in the end of \cite[\S1]{kraucoh}\footnote{See also Corollary 2.6 of \cite{modoicogen}; moreover, one may  apply 
Theorem 4.3.1 of \cite{bpure}  to the countably perfect set $\cup_{i\in \z}\cp[i]$ (cf. Definition 4.1.4 of ibid. and recall that $\lan \cu_{w=0} \ralo$ is Karoubi-closed in $\cu$ according to 
Remark \ref{rkar}(3)).} 
implies that $\cu=\lan \cp\tbig\ralo$. Next, Theorem \ref{tabo}(\ref{iarc}) certainly yields that the functor $\rinf$ respects all small coproducts. Thus $\cu\abo=\lan \rinf(\cp)\tbig\ralo$ 
 (note that $\rinf$ is essentially surjective on objects). Lastly recall that $\rinf$ is also  weight-exact (see part \ref{iabext} of the theorem); hence $\rinf(\cp)\subset \lan \obj \cu\abo{}_+ \ralo=\lan\obj \cu_{b} \ralo =\lan \cu_{w=0} \ralo$ (here one can apply part \ref{iabdhcl} of our theorem to obtain the first of these equalities).

\end{proof}

\begin{rema}\label{rconj}
1. 
Obviously, if $w$ is as in part II.2 of our corollary and also left non-degenerate then 
 $\cu=\lan \cu_{w=0}\ralo$; hence $w$ is $\alo$-generated by $\cu_{w=0}$.
Moreover, even if $w$ is left degenerate we may still "recover" $w$ from the weight structure $\alo$-generated by  $\cu_{w=0}$; see Theorem \ref{tabo}(\ref{iabext}).

2. If $w$ is ($\alo$-smashing and) left non-degenerate then 
 Theorem \ref{tabo}
(\ref{iabext})  implies that $\cu_{w\ge 0}$ is contained in $\cu\abo$. In particular, this is the case for the stupid weight structure of $K(\bu)$ for any $\al$-smashing additive $\bu$ (see Remark \ref{rstws}; certainly, in this case this statement can be easily verified directly).

 3. It may make sense to prove that $\co(c)\in \cu_{w\ge i}$ (in the proof above) for all $i\in \z$ via computing $\cu(Z,-)(M')$ for $Z\in \cu_{w\le i}$.  The problem is that these functors are "difficult to control" if $Z$ is not compact; so this  plan is not so easy to realize if $w$ is not generated 
 by a class of compact objects. Still the authors have some ideas for overcoming this difficulty (and considering arbitrary  $Z\in \cu_{w\le i}$); this requires some new lemmas on countable homotopy colimits in $\cu$.

This method may also yield some alternative to Corollary \ref{cabo}(II.2) that would ensure that $\cu\abo=\lan \cu_{w=0}\ralo$ under certain assumptions on the (existence and properties of) the $t$-structure {\it right adjacent} to $w$; see 
\S1.3 and Theorem 3.2.3 of  \cite{bvtr}.
Yet 
 this argument probably does not 
 work without the assumption that this $t$-structure is countably smashing (in the naturally defined sense).

4. Assume once again that $w$ is $\alo$-smashing and left non-degenerate. Let $H:\cu \to \au$ be a homological functor that respects countable coproducts, where $\au$ is an abelian category; assume that $H$ kills $\cu_{w=i}$ for all $i>0$. 

Applying Proposition \ref{pbw}(\ref{iextcub}) we  obtain that $H$ also kills $\cu_{[1,i]}$ for all $i>0$. Thus Theorem \ref{tabo}(\ref{iabext},\ref{iabdhcl} ) implies that $H$ kills $\cu_{w\ge 1}$.

This observation can be used to obtain a nice easy proof of the stable Hurewicz theorem. 
\end{rema}


Now we describe those categories to which a countably smashing weight structure restricts.

\begin{pr}\label{prestrc}
Let $w$ be an $\al$-smashing (resp. smashing) weight structure structure on $\cu$ for $\al>\alz$. Then for an $\al$-localizing (resp. localizing) 
 subcategory $\du$ of $\cu$ the following conditions are equivalent.

A. $w$ restricts to $\du$.

B. $\obj \du=\obj \du' \star L $, where $L\subset \obj \cu_{+\infty}$ and $\du'$ is a triangulated subcategory of $\cu\abo$ such that $w$ restricts to it.

C. For any $M\in \obj \du$ we have $\linf(M)\in \obj \du$ (where $\linf:\cu\to \cu_{+\infty}$ is the functor defined in Theorem \ref{tabo}(\ref{iabdj})) and there exists  a choice of  $w_{\le i}M$ for all $i\in \z$ such that for the connecting morphisms $c_i:w_{\le i-1}M\to  w_{\le i}M$ (see Proposition \ref{pbw}(\ref{ifilt})) we have $\co(c_i)\in \obj \du$.

D.  There exists a class of objects $Q\subset \obj \du$ that $\al$-generates (resp. class-generates) $\du$ and the assumptions of condition C are fulfilled for its elements. 


\end{pr}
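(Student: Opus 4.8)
The plan is to use condition A as a hub and prove A $\Leftrightarrow$ C, A $\Leftrightarrow$ B and A $\Leftrightarrow$ D separately, with D $\Rightarrow$ A carrying the real content. Throughout I would use (as in the proof of Proposition \ref{prestr}(2)) that $w$ restricts to $\du$ if and only if every $M\in\obj\du$ admits a weight decomposition with both terms in $\obj\du$, the remaining axioms for $(\cu_{w\le 0}\cap\obj\du,\cu_{w\ge 0}\cap\obj\du)$ being inherited from $\cu$. Two facts are recorded at the outset: since $\al>\alz$ the subcategory $\du$ is $\alo$-localizing, hence closed under countable coproducts and under the countable homotopy colimits of its objects; and $\cu_{+\infty}=\cap_{l}\cu_{w\ge l}$ is $\al$-smashing (resp. smashing) because $\cu_{w\ge 0}$ is, so by Theorem \ref{tabo}(\ref{iarc}) the functors $\rinf$ and $\linf$ respect coproducts of less than $\al$ objects (resp. all small coproducts). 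The smashing case is handled verbatim, replacing ``less than $\al$'' by ``all small'' everywhere.

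For A $\Leftrightarrow$ C I would first reformulate: for $M\in\obj\du$, condition C holds iff $\linf(M)\in\obj\du$ and for every $i\in\z$ the object $M$ has an $i$-weight decomposition with $w_{\le i}M\in\obj\du$. Given such level-wise decompositions, the unique connecting maps $c_i$ of Proposition \ref{pbw}(\ref{ifilt}) have $\co(c_i)\in\cu_{w=i}\cap\obj\du$; conversely condition C forces each $w_{\le i}M\in\obj\du$ by the telescope argument below. Assuming A, one chooses all $w_{\le i}M$ inside $\du$, so $\co(c_i)\in\obj\du$ and $\rinf(M)\cong\hcl_i w_{\le i}M\in\obj\du$ (Theorem \ref{tabo}(\ref{iahcl})), whence $\linf(M)=\co(\rinf(M)\to M)\in\obj\du$; this is C. For C $\Rightarrow$ A, from $\linf(M)\in\obj\du$ and the triangle of Theorem \ref{tabo}(\ref{iabdj}) we get $\rinf(M)\in\obj\du$; setting $P_i=\co(w_{\le 0}M\to w_{\le i}M)$, which lies in $\cu_{w\ge 1}\cap\obj\du$ since it is built by extensions from the $\co(c_j)\in\cu_{w=j}\cap\obj\du$, the telescope triangle $w_{\le 0}M\to\hcl_i w_{\le i}M\to\hcl_i P_i\to w_{\le 0}M[1]$ together with $\hcl_i P_i\in\obj\du$ yields $w_{\le 0}M\in\obj\du$, so $w_{\le 0}M\to M\to w_{\ge 1}M$ is a weight decomposition inside $\du$ and A follows.

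For A $\Leftrightarrow$ B, assuming A set $\du'=\du\cap\cu\abo$ and $L=\obj\du\cap\obj\cu_{+\infty}$. Then $\du'$ is a triangulated subcategory of $\cu\abo$, and $w$ restricts to it: for $M\in\obj\du'$ the $\du$-internal decomposition has $w_{\le 0}M\in\cu_{w\le 0}\subset\obj\cu\abo$, hence also $w_{\ge 1}M\in\obj\cu\abo$. As $\rinf(M)\in\obj\du\cap\obj\cu\abo=\obj\du'$ and $\linf(M)\in\obj\du\cap\obj\cu_{+\infty}=L$ (both from A $\Rightarrow$ C), the triangle $\rinf(M)\to M\to\linf(M)$ exhibits $\obj\du=\obj\du'\star L$, giving B. Conversely, given B any $M\in\obj\du$ sits in a triangle $D'\to M\to l$ with $D'\in\obj\du'$ and $l\in L\subset\cu_{w\ge 1}$; taking a $\du'$-internal decomposition $w_{\le 0}D'\to D'\to w_{\ge 1}D'$ and applying the octahedral axiom to $w_{\le 0}D'\to D'\to M$ produces $w_{\le 0}D'\to M\to C$ with $C$ an extension of $l$ by $w_{\ge 1}D'$, so $C\in\cu_{w\ge 1}\cap\obj\du$: a weight decomposition of $M$ inside $\du$, giving A.

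Finally A $\Leftrightarrow$ D. The implication A $\Rightarrow$ D is immediate: by A $\Leftrightarrow$ C condition C holds for every object, so $Q=\obj\du$ works. For D $\Rightarrow$ A I would show that the class $\mathcal{C}$ of objects of $\du$ satisfying (the reformulation of) C equals $\obj\du$. It is closed under $[\pm1]$; under coproducts of less than $\al$ objects, since $w_{\le i}(\coprod_s M_s)=\coprod_s w_{\le i}M_s\in\obj\du$ and $\linf(\coprod_s M_s)=\coprod_s\linf(M_s)\in\obj\du$ by the coproduct-preservation of $\linf$ and Proposition \ref{pneem}(1); and, the key point, under extensions: given $D_1\to M\to D_2$ with $D_1,D_2\in\mathcal{C}$, exactness of $\linf$ gives $\linf(M)\in\obj\du$, while for each $i$ the gluing of Proposition \ref{pstar}(2) applied to level-$i$ decompositions of $D_1,D_2$ (using $\cu_{w\le 0}\perp\cu_{w\ge 1}$) yields $w_{\le i}M\in\cu_{w\le i}\cap\obj\du$ as an extension of $w_{\le i}D_2$ by $w_{\le i}D_1$. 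Since $\mathcal{C}$ contains $Q$, hence all $Q[j]$, and is extension-closed and closed under coproducts of less than $\al$ objects, it contains $[(\cup_{j}Q[j])\bral=\obj\du$, so C holds throughout $\du$ and A follows by C $\Rightarrow$ A. The main obstacle is the bookkeeping in the telescope argument of C $\Rightarrow$ A — identifying the cofiber of $w_{\le 0}M\to\rinf(M)$ with $\hcl_i P_i$ and verifying $\hcl_i P_i\in\cu_{w\ge 1}\cap\obj\du$ — which is exactly what makes condition C usable and is reused in the reformulation lemma.
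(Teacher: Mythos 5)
Your proof is correct, but it is organized differently from the paper's and hinges on one step the paper deliberately avoids. The paper proves the cycle A$\Rightarrow$B$\Rightarrow$A and the cycle A$\Rightarrow$C$\Rightarrow$D$\Rightarrow$A; it never proves C$\Rightarrow$A directly. Its D$\Rightarrow$A argument shows that the class $P=\cu_{w\le 0}\star (\obj \du \cap \cu_{w\ge 1})$ is extension-closed and $\al$-smashing and contains $\cup_j Q[j]$, by writing $M[j]$ as an extension of $\linf(M[j])$ by $\rinf(M[j])\cong\hcl_i w_{\le i}M[j]$ and running an induction on $i$ showing $w_{\le i}M\in P[-j]\cap P[-1-j]$ via the triangles $\co(c_l)[-1]\to w_{\le l-1}M\to w_{\le l}M$. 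This only produces, for each $M\in\obj\du$, a weight decomposition whose $w\ge 1$ part lies in $\du$ (the $w\le 0$ part then lies in $\du$ for free as a cone), and so it never needs to identify the cofibre of $w_{\le 0}M\to\rinf(M)$. You instead prove C$\Rightarrow$A outright via the telescope triangle $w_{\le 0}M\to\hcl_i w_{\le i}M\to\hcl_i P_i$, and then make D$\Rightarrow$A a closure argument for the class of objects satisfying C (closed under shifts, coproducts of less than $\al$ objects via exactness and coproduct-preservation of $\linf$, and extensions via the gluing of Proposition \ref{pstar}(2)). That is all sound, and it buys a cleaner logical picture (A as a hub, C shown to be a genuinely pointwise reformulation of A) at the price of one extra lemma you state but do not justify: that a levelwise-compatible system of distinguished triangles of towers induces a distinguished triangle of homotopy colimits, and that the homotopy colimit of the constant tower on $w_{\le 0}M$ is $w_{\le 0}M$. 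Both are standard (the first is the $3\times 3$ lemma applied to the defining triangles of the homotopy colimits, after choosing the connecting maps $P_i\to P_{i+1}$ compatibly; the second is the usual cone computation for $\coprod\id\bigoplus\coprod(-\id)$), but neither is recorded in the paper, so you should either cite them or note that the paper's route via $P=\cu_{w\le 0}\star (\obj \du \cap \cu_{w\ge 1})$ sidesteps them entirely. The A$\Leftrightarrow$B part of your argument coincides with the paper's (the same octahedron for B$\Rightarrow$A, and Proposition \ref{prestr}(2) together with the $\rinf$/$\linf$ triangle for A$\Rightarrow$B).
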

\begin{proof}
We recall that Theorem \ref{tabo}(\ref{iahcl}) says that $\rinf(M)\cong \hcl_i w_{\le i}M $ for $M\in \obj \cu$. 
Next, if $w$ restricts to $\du$ then for any $M\in \obj \du$ we can choose $w_{\le i}M\in \obj \du$. Since $\du$ is an $\alo$-localizing subcategory of $\cu$ we obtain that $\rinf(M)\in \obj \du$.
Now,  part \ref{iabdj} of the theorem 
 gives a distinguished triangle
 \begin{equation}\label{eldec} 
\rinf(M)\to M\to \linf(M)\to \rinf(M)[1]; 
\end{equation}
hence $\linf(M)$ belongs to $\obj \du$ as well. Moreover, 
the restriction of $w$ to $\du$ restricts to the triangulated subcategory $\du'$ whose object class equals $\obj \du\cap \obj \cu \abo$ according to Proposition \ref{prestr}(2). Thus condition A implies condition B.

Conversely, if condition B is fulfilled then for any $M\in \obj \du$ there exists a $\du$-distinguished triangle $X\to M\to Y \to X[1]$ with $X\in \obj \cu\abo $ and $Y\in \obj\cu_{+\infty}$ as well as a $w$-decomposition $\du$-morphism $w_{\le 0 }X\to X$. 
 The octahedron axiom of triangulated categories (along with Proposition \ref{pbw}(\ref{iext})) is easily seen to imply that the composition morphism 
$w_{\le 0}X\to M$ gives a weight decomposition for $M$.  Thus condition B implies condition A.

To prove that A implies C it remains to take (once again) for $M\in \obj \du$ a choice $w_{\le i}M\in \obj \du$ for all $i\in \z$  to obtain that  cones of the corresponding  $c_i$ belong to $\obj \du$ as well. 

Obviously, condition C implies condition D. 

Lastly assume that condition D is fulfilled. We argue somewhat similarly to the proof of Theorem \ref{tloc}(3). To verify condition A we should prove that 
the class $P=\cu_{w\le 0}\star (\obj \du \cap \cu_{w\ge 1})$ contains $\obj \du$. Proposition \ref{pstar}(1,2) 
  implies that $P$ is an $\al$-smashing (resp., smashing) extension-closed class of objects. Hence it suffices to verify that $P$ contains $\cup_{j\in \z}Q[j]$. Next, using the distinguished triangle (\ref{eldec}) we obtain that for any $M\in Q$ and $j\in \z$ it suffices to verify that $\rinf(M[j])\in P$. Next, the aforementioned isomorphism $\rinf(M)\cong \hcl_i w_{\le i}M \in \obj \du$ implies the following: it suffices to check that for any $i\in \z$ 
	there exists a choice of $w_{\le i}M$ belonging to  $ P[-j]\cap P[-1-j]$ (recall the definition of countable homotopy colimits). We take  choices of $w_{\le i}M$ coming from condition D and prove that they belong to $ P[-j]\cap P[-1-j]$ by induction on $i$. Certainly, this statement is fulfilled if $i<-j$. Next, our assumptions obviously imply that $\co(c_l)[m]	\in P$ for all $l,m\in \z$. Thus the distinguished triangle $\co(c_l)[-1]\to w_{\le l-1}M\to  w_{\le l}M\to \co(c_l)$ yields that $w_{\le l}M$ belongs to $P[-j]\cap P[-1-j]$ whenever $w_{\le l-1}M$ does. 

\end{proof}

\begin{rema}\label{restrc}

\begin{enumerate}
\item\label{irest1} One can certainly re-formulate the assumption that $w$ restricts to $\du'$ in condition B using the fact that the remaining conditions of our proposition are equivalent to each other. Note also that we have $\co(c_i)[-i]\in \cu_{w=0}$, and these objects are the terms of some choices of (weak) weight complexes both for $X$ and $\rinf(X)$ (see Proposition \ref{pwc} below). Thus the question whether $w$ restricts to $\du$ has an answer in terms of objects of $\cu_{+\infty}$ and of $\hw$. 

\item\label{irest2} Our proposition easily implies that  for any $\al$-smashing ($\cu,w$) there exists an inductive system of essentially small $\al$-localizing triangulated subcategories  $\cu_i\subset \cu$ such that $w$ restricts to them and $\cu$ is isomorphic 
to the $2$-colimit of $\cu_i$; cf. Proposition \ref{pwede}(4).

\end{enumerate}
\end{rema}

\subsection{Some examples and comments (to \S\ref{sloc})}\label{sex}


We start from an easy application of Theorem \ref{tloc}. We formulate it for the smashing setting;  however, 
part \ref{irl1} of the following proposition can be carried over to the $\al$-smashing setting without any problems.

\begin{pr}\label{prloc}
Let $w$ be a smashing weight structure on $\cu$, $B=B_1\cup B_2\subset \obj \cu$, where the elements of $B_1$ are left $w$-degenerate and the elements of $B_2$ are right $w$-degenerate; assume that the localization of $\cu$ by $\du=\lan B \rab$  exists (in the sense of Remark \ref{rclass}(3)) and denote it by $\pi: \cu\to \cu'$.

\begin{enumerate}
\item\label{irl1} Then $w$ descends to $\cu'$ and the restriction of $\pi$ to $\cu_b$ is a full embedding.


\item\label{irl3} Assume in addition that $\cu$ is generated by a  class 
 $Q$ of its compact objects as its own localizing subcategory,  $w$ is  class-generated by $\cu_{w=0}$, and  $Q\subset \obj \cu_b$.  Then not all elements of $\pi(Q)$ are compact in $\cu'$. Moreover, if 
 $Q$ and $B$ are sets then the descended weight structure $w'$ on $\cu'$  is perfectly generated (in the sense of  Remark \ref{rcgws}(\ref{iperf})).

\item\label{irl4} In particular, one can apply the previous assertions for $\cu=\shtop$, $w=w\sph$ (see Remark \ref{rcgws}(\ref{itop})), $B=B_2$ being any set of acyclic spectra (i.e., their singular homology is zero). 
 Moreover, $w'$ is class-generated by 
 $\{\pi(S)\}$ and  $\pi(S)$ is not compact. 
\end{enumerate}
\end{pr}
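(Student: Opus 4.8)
The plan is to treat the three assertions in turn: reducing (\ref{irl1}) to the machinery of \S\ref{sloc}, deducing (\ref{irl3}) from localization theory for compactly/well generated categories, and obtaining (\ref{irl4}) as a direct specialization.

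For (\ref{irl1}): since $B=B_1\cup B_2$ is precisely a class of the form allowed in Theorem \ref{tloc}(3)(ii) with empty $B_0$, that theorem gives at once that weak $w$-decompositions exist inside $\du$, that $\pi$ respects all coproducts, and that $w$ descends to a smashing weight structure $w'$ on $\cu'$. For the full embedding I would invoke Proposition \ref{pwede}(2), which (using these weak $w$-decompositions) shows that $\pi$ restricts to a full embedding of $\cu_b/\du_b$; hence it suffices to prove $\obj\du\cap\obj\cu_b=\{0\}$. To this end note that both $\cap_{l\in\z}\cu_{w\le l}$ and $\cap_{l\in\z}\cu_{w\ge l}$ (the right resp. left $w$-degenerate objects) are localizing subcategories: they are shift-invariant and extension-closed by Proposition \ref{pbw}(\ref{iext}), and closed under coproducts by Proposition \ref{pbw}(\ref{icopr}) resp. by the smashing assumption. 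Thus $\du_2:=\lan B_2\rab\subset\cap_l\cu_{w\le l}$ and $\du_1:=\lan B_1\rab\subset\cap_l\cu_{w\ge l}$; since $\du_2\subset\cu_{w\le 0}$ and $\du_1[1]\subset\cu_{w\ge 1}$, orthogonality gives $\du_2\perp\du_1[1]$, so Proposition \ref{pstar}(1,2) shows $\obj\du_2\star\obj\du_1$ to be an extension-closed smashing class; being shift-stable (as $\du_1,\du_2$ are triangulated) and containing $B$, it equals $\obj\du$. So any $D\in\du_b$, say $D\in\cu_{[a,b]}$, fits into a triangle $Z_2\to D\stackrel{g}\to Z_1\to Z_2[1]$ with $Z_2\in\du_2$, $Z_1\in\du_1$; since $D\in\cu_{w\le b}$ and $Z_1\in\cu_{w\ge b+1}$ we get $g=0$, whence $D\cong Z_2\oplus Z_1[-1]$. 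As $\cu_b$ is Karoubi-closed (Proposition \ref{pbw}(\ref{iextb})), both summands are $w$-bounded; but a bounded right (resp. left) degenerate object lies in some $\cu_{[a,a-1]}=\{0\}$, so $Z_2=Z_1=0$ and $D=0$.

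For (\ref{irl3}): the hypothesis that $w$ is class-generated by $\cu_{w=0}$ forces, via the smashing case of Theorem \ref{tvneg}(\ref{itvn2}) (i.e. Corollary \ref{classgws}(1)), that $w$ is left non-degenerate; hence $B_1=\{0\}$ and $\du=\lan B_2\rab\subset\cap_l\cu_{w\le l}$. Since $Q\subset\cu_b$ is a class of compact generators, the compact objects $\cu^c$ form the thick subcategory generated by $Q$, contained in the Karoubi-closed triangulated category $\cu_b$; therefore any compact object of $\cu$ lying in $\du$ is both $w$-bounded and right $w$-degenerate, hence zero, i.e. $\du\cap\cu^c=\{0\}$. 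Assuming the relevant case $\du\neq\{0\}$ (automatic once $B$ contains a nonzero object), I would argue by contradiction: if every $\pi(q)$ were compact in $\cu'$, then $\pi$ would carry $Q$ — and so all of $\cu^c$ — to compact objects, making $\cu\to\cu'$ a compactness-preserving (finite) localization; by the localization theory of \cite{neebook} the kernel $\du$ would then be generated as a localizing subcategory by $\du\cap\cu^c=\{0\}$, forcing $\du=\{0\}$, a contradiction. Thus some $\pi(q)$ is non-compact. For the \emph{moreover} part, when $Q$ and $B$ are sets the category $\cu$ is compactly generated, hence well generated, and $\du$ is a localizing subcategory generated by a set; therefore $\cu'$ is again well generated, and the smashing weight structure $w'$ on it is perfectly generated by Theorem 4.4.3(II.2) of \cite{bpure} (quoted in the Introduction).

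For (\ref{irl4}): I would specialize to $\cu=\shtop$, $w=w\sph$, recalling from Remark \ref{rcgws}(\ref{itop}) that $w\sph$ is class-generated (hence generated) by the compact object $S\in\shtop_b$, which generates $\shtop$ as its own localizing subcategory. Using the homological description of $w\sph$ (the sections cited in Remark \ref{rcgws}(\ref{itop})) one checks that a spectrum is acyclic exactly when it is right $w\sph$-degenerate, so any set $B=B_2$ of acyclic spectra is of the required form, with $\du\neq\{0\}$ since nonzero acyclic spectra exist; parts (\ref{irl1})–(\ref{irl3}) then apply with $Q=\{S\}$. Finally, $\pi$ being weight-exact and essentially surjective, Corollary \ref{classgws}(1) gives that $w'$ is class-generated by $\pi(\cu_{w\sph=0})$, and since $\cu_{w\sph=0}$ consists of retracts of coproducts of copies of $S$ this is the class generated by $\{\pi(S)\}$; the non-compactness of $\pi(S)$ is the case $Q=\{S\}$ of (\ref{irl3}). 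The main obstacle throughout is exactly this non-compactness in (\ref{irl3}): the point is to exhibit a perfectly but not compactly generated $w'$, and the delicate step is converting "all $\pi(q)$ compact" into the collapse $\du=\{0\}$, which rests on the finite-localization theory of \cite{neebook} together with the clean vanishing $\du\cap\cu^c=\{0\}$ coming from the weight-degeneracy of the generators of $\du$.
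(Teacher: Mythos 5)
Your parts (\ref{irl1}) and (\ref{irl4}) follow the paper's own route (Theorem \ref{tloc}(3(ii)) for the descent, the decomposition $\obj\du=\obj\du_2\star\obj\du_1$ via Proposition \ref{pstar}(1,2) together with orthogonality to kill the $w$-bounded objects of $\du$, then Proposition \ref{pwede}(2); and the same specialization to $\shtop$), so those are fine. The genuine problem is in part (\ref{irl3}). There you pass from ``every $\pi(q)$ is compact'' to ``$\pi$ is a compactness-preserving localization'' and then assert, citing \cite{neebook}, that the kernel $\du$ must therefore be generated as a localizing subcategory by its intersection with the compact objects of $\cu$, whence $\du=0$. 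That implication is not a theorem of \cite{neebook} and is false in general: the Neeman--Thomason localization theorem goes in the opposite direction (if $\du$ is generated by a set of objects that are compact in $\cu$, then $\pi$ preserves compactness), while the converse is essentially the telescope/smashing conjecture, which fails for general compactly generated categories -- every smashing localization preserves compactness, yet its kernel need not be generated by compact objects of the ambient category. So the step ``forcing $\du=\{0\}$'' is unjustified, and the whole contradiction collapses.

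The paper closes this exact gap using an ingredient you already proved in part (\ref{irl1}) but then set aside: the restriction of $\pi$ to $\cu_b\supset\obj\lan Q\ra$ is a full embedding. Lemma 1.1.1(2) of \cite{bokum} is a Beilinson-lemma-type statement: an exact coproduct-respecting functor out of a category generated by a class $Q$ of compact objects as its own localizing subcategory, which is fully faithful on $\lan Q\ra$ and sends the elements of $Q$ to compact objects, is fully faithful on all of $\cu$. Hence if all the $\pi(q)$ were compact, $\pi$ itself would be a full embedding, contradicting $\du\neq 0$ (an implicit nontriviality hypothesis that both you and the paper need; you at least state it explicitly). With this substitution the rest of your part (\ref{irl3}) (the well-generatedness of $\cu'$ for $Q$ and $B$ sets, and the perfect generation of $w'$ via \cite{bpure}) and part (\ref{irl4}) go through as in the paper.
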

\begin{proof}
\ref{irl1}. 
$w$ descends to $\cu'$ according to Theorem \ref{tloc}(3(ii)). 

Now we prove the second part of the assertion.  Since $w$ is smashing, the objects of the localizing subcategory $\du_1=\lan B_1 \rab$ of $\cu$ 
are left weight-degenerate, and the objects of  
  $\du_2=\lan B_2\rab$ 
 are right weight-degenerate (see Proposition \ref{pbw}(\ref{icoprt},\ref{ismash}). 
 Next, Proposition \ref{pstar}(1,2) easily implies that  for any object  $D$ of $\du$ 
there exists a $\du$-distinguished triangle $$D_2\stackrel{f}{\to} D\stackrel{g}{\to} D_1\to D_2[1]$$ with 
$D_1\in \obj \du_1$ and $D_2\in \obj \du_2$.\footnote{One can easily prove that this triangle is functorially determined by $D$; yet we will not need this fact below.}  
If $D$  is also $w$-bounded then 
the orthogonality axiom for $w$ immediately gives the vanishing of $f$ and $g$ in this triangle; hence $D=0$. Thus the restriction of $\pi$ to $\cu_b$ is a full embedding according to Proposition \ref{pwede}(2).  

\ref{irl3}. Recall that the functor $\pi$ respects coproducts (see Theorem \ref{tloc}(3)). Since $\lan Q\ra\subset \cu_b$, the restriction of $\pi$ to this subcategory is a full embedding. Since $\pi$ is not a full embedding itself, (the easy)    Lemma 1.1.1(2) of \cite{bokum} implies that not all elements of $\pi(Q)$ are compact.

Next, if  $Q$ and $B$ are  sets then $\cu'$ is {\it well-generated} (see Theorem 7.2.1 of \cite{krauloc}). Since $w'$ is a smashing weight structure on $\cu$ (see Theorem \ref{tloc}(3)), it is perfectly generated according to Theorem 4.4.3(III.2) of \cite{bpure}.

\ref{irl4}. The only fact that remains to be  verified so that we can apply the previous assertions is that acyclic spectra in $\shtop$ are right $w\sph$-degenerate; this statement is 
 given by Theorem 4.2.3(2,6) of \cite{bkwn}. 

Next recall that $w\sph$ is class-generated by $\{S\}$ and $w'$ is smashing (see Theorem \ref{tloc}(3)); hence $w'$ is class-generated by   $\{\pi(S)\}$ according to Theorem \ref{tvneg}(\ref{itvn5}).
 We also obtain that not all elements of $\pi (\obj (\lan S\ra)) =\pi(\obj \shfin)$ are compact; it certainly follows that $\pi(S)$ is not compact. 
\end{proof}

We make some comments to this proposition along with Theorem \ref{tloc} itself.

\begin{rema}\label{rprloc}
1. Recall also that non-zero acyclic object in $\shtop$ do exist; see Theorem 16.17 of \cite{marg}. Moreover, under the assumptions of part \ref{irl3} of our proposition the weight structure $w'$ is also {\it (strongly) well-generated} in the sense of \cite[Remark 4.4.4(1)]{bpure}. Thus part \ref{irl4} of the proposition is a source of interesting 
 class-generated weight structures; they are (perfectly and) well-generated. Moreover, the authors suspect that $\cu'$ does not contain non-zero compact objects in this case; this would certainly imply that this $w'$  is not compactly generated in the sense of Remark \ref{rcgws}(\ref{icomp}).

2. Since $w$ is class-generated (under the assumptions of part \ref{irl3} of our proposition), $w$ is  left non-degenerate (see Theorem \ref{tvneg}(\ref{itvn2})); thus   $B_1$ is zero automatically.   Applying Proposition \ref{pwede}(2) we obtain that  the restriction of  $\pi$ to $\cu_+$ is a full embedding as well. We obtain that it suffices to assume that $Q\subset \obj \cu_+$ in this part of the assertion. 

On the other hand, for any compactly (or well) generated $\cu$\footnote{Recall that $\cu$ is said to be compactly generated whenever it is generated by a set of its compact objects as its own localizing subcategory.} 
 and its localizing subcategory $\du$ generated by a set $B$ 
  the quotient $\cu'$ is well-generated; thus if a smashing weight structure $w$ on $\cu$  descends to a weight structure $w'$ on $\cu'$  then $w'$ is strongly well-generated (and so, perfectly generated). In particular (to apply Theorem \ref{tloc}(3(ii))) one can take   $B=B_0\cup B_2$ with any sets $B_0\subset \cu_{[0,1]}$ and 
 $B_2$ consisting of  right $w$-degenerate objects. 


3. Arguing as in the proof of 
 Theorem \ref{tloc}(1) one can easily obtain the following statement. 

Let $F_0:\cu\to \cu_0$ and $F':\cu_0\to \cu'$ be exact functors such that the composition $F=F'\circ F_0$ is weight-exact (with respect to certain $w$ and $w'$), and assume in addition that $F_0$ is a localization functor ($\cu\to\cu/\du$) 
 and that for any $M,N\in \obj \cu$ the restriction of the homomorphism $\cu_0(F_0(M),F_0(N))\to \cu'(F(M),F(N))$ induced by $F'$ to the image of 
$\cu(M, N)$ in $\cu_0(F_0(M),F_0(N))$ is injective (in particular, this is certainly the case if $F'$ is injective on morphisms). Then for any $N\in \obj \du$ there exists a distinguished triangle $D(N)$ (see (\ref{ewwd})) in $\du$; thus $w$ descends to $\cu_0$.
\end{rema}

We also give 
  some examples of weight-exact localizations such that $\hpi$ is not an equivalence, and yet $\hw'$ has an easy description.

\begin{rema}\label{rrheart}
Adopt the assumptions and the notation of Theorem \ref{theart} and assume that  $B_0\subset \cu_{w=0}$ (this corresponds to $S_0$ of the form $\{0\to M\}$ for $M$ running through a subclass of $\cu_{w=0}$). We 
 describe the category $\hw[S\ob]\subset \hw'$ in the $\al$-smashing case; the corresponding statement in the smashing setting is similar.

In the $\al$-smashing case of the theorem all coproducts of less than $\al$ elements of $B_0$ are killed by 
 $\pi$; denote this class of objects by $B_0^{<\al}$. Then the additive functor from $\hw$ that is bijective on objects, surjective on morphisms, and kills all morphisms that factor through $B_0^{<\al}$ is easily seen to be equal to the localization $\hw\to \hw[S'{}\ob]$ (cf. the proof of Theorem \ref{theart}(4)). Hence  the category $\hw[S'{}\ob]\cong \hw[S\ob]$ 
 is isomorphic to the corresponding "additive quotient category" that may be denoted by $\hw/B_0^{<\al}$. 

This statement is closely related to a similar calculation given by Proposition 8.1.1(2) of \cite{bws}. It gives an easy (and rather "stupid") way of obtaining $\al$-smashing weight structures that are not smashing. One can localize a (smashing) category $\cu$ endowed with a smashing weight structure $w$ by $\lan B_0\ral$ for $B_0$ being (say) a subset of $\cu_{w=0}$. 
 Then the category $\kar(B_0^{<\al})$ is essentially 
small; hence it is not smashing and the localization functor $\pi$ cannot respect coproducts since its restriction $\hpi$ does not. 

Moreover, the authors suspect that neither $\cu'$ nor $\hw'$ can be smashing in this case. 
A useful observation here is that for $M_i\in \cu_{w=0}$ the coproduct of $\pi(M_i)$ (if exists) is a retract of $\pi(\coprod M_i)$.
We use it to demonstrate that $\hw'$ is not smashing in the case $\cu=\shtop$, $w=w\sph$, and $B_0=\{S\}$.  
Then  $\hw$ is isomorphic to the category of free abelian groups (so we will write its objects as abelian groups; recall that $S$ corresponds to $\z$), and $\hpi$ kills those morphisms whose images have less than $\al$ generators.  
We take all $M_i$ equal to $ \z\otal$ (the direct sum of $\al$ copies of $\z$) for $i$ running through $I=
 \al$.  If $\coprod_i\pi(M_i)$ exists then the corresponding idempotent endomorphism of $\pi(\coprod M_i)$ is easily seen to lift to an idempotent endomorphism of the group $\coprod M_i$. Hence $\coprod M_i\cong C\bigoplus D$ and the corresponding  morphisms $\pi(M_i)\to \pi(C)$ give $\pi(C)\cong \coprod \pi(M_i)$. Thus for the composition homomorphisms $pr^D_i:M_i\to D$ we have $\pi(pr^D_i)=0$; hence the images of these homomorphisms have less than $\al$ generators and their kernels are "rather large". Thus we can assume that $M_i\cong 
\z\bigoplus M_i'$ 
and 
 all of these copies of $\z$ belong to $C$. Therefore the 
 corresponding coproduct morphism $f: C\to \z^\oi$ (this is the direct sum of copies of $\z$ indexed by $I$) is surjective, whereas the compositions $f_i: M_i\to C\to \z^\oi$ have one-dimensional images. Thus $\pi(f_i)=0$ and $\pi(f)\neq 0$; hence these morphisms $M_i\to C$ do not actually give 
$\pi(C) \cong \coprod \pi(M_i)$.

\end{rema}

We also make some comments to Proposition \ref{pdesc}.

\begin{rema}\label{rrdesc}
1. 
Describing weight-exact functors appears to be an important problem. 

The only existing general recipe of constructing weight-exact functors that are surjective of objects is the theory of weight-exact localizations (see Theorem \ref{tloc}). However, some interesting examples where $w$ descends to $\cu$ and $F$ is not a localization functor can be easily constructed.
 According to Theorem 4.2.1(1) of \cite{bwcp} the aforementioned spherical weight structure $w_{\sph}$ restricts to the category $\shfin$ of finite spectra.  
We denote the resulting weight structure by   $w_{\sph}^{\fin}$; its heart consists of finite coproducts of $S$. 
 We take $F$ to be the singular homology functor whose target is $\cu'=D^b(\z)$. The latter category is equivalent to $K^b(\ffab)$, where $\ffab$ is the category of finitely generated free abelian groups. Hence $F$ is weight-exact with respect to $(w_{\sph}^{\fin}, \wstu)$ (see Remark \ref{rstws}(1)).
The functor $F$ is essentially surjective on objects since any object of $D^b(\z)$ splits as the (finite) direct sum of its shifted (co)homology (considered as finitely generated abelian groups) that can be lifted to (the corresponding shifts of) their Moore spectra (that are certainly finite). Moreover, $F$ is not a localization since it is conservative. 

This example can be generalized as follows: let $w$ be a bounded weight structure on $\cu$ such that $\hw$ is the category of finitely generated left projective  modules over a 
 left Noetherian left semi-hereditary ring $R$ (i.e., $R$ is an unital associative Noetherian ring such that any submodule of a left finitely generated projective $R$-module is projective). Assume moreover that there exists an exact  {\it strong weight complex} functor $F:\cu\to K^b(\hw)$   (see Conjecture 3.3.3 and  \S6.3 of \cite{bws} and  
Corollary 3.5 of \cite{sosnwc} 
 for  the definition and plenty of examples); 
  recall that this functor is essentially identical on $\hw$. Since any finitely generated left $R$-module has a two-term resolution by objects of $\hw$,   $F$ is essentially surjective on objects. Once again, $F$ is conservative immediately from Proposition 1.3.4(8) of \cite{bwcp}; hence $F$ is not a localization.

More generally, one can certainly combine functors constructed this way with weight-exact localizations.

2. Applying  Theorem 2.2.2(I.1) of \cite{bonspkar}  we obtain that  all parts of Proposition \ref{pdesc} 
can be generalized as follows: instead of an exact functor $F$ that is essentially surjective on objects one can take its composition with a full embedding $i:\cu'\to \cu''$ such that any object of $\cu''$ is a retract of an object of $i(\cu')$.\footnote{Actually, one has to generalize loc. cit. a little to obtain the corresponding version of Proposition \ref{pdesc}(2) for $(m,n)\neq (0,0)$; yet the 
 argument used in the proof carries over to this generality without any difficulty.} So, one may say that the corresponding weight structure on $\cu''$ (if exists) is descended from $w$ as well. 

\end{rema}

\subsection{On weight complexes and 
  subcategories}\label{swc}

We recall some basics on the theory of weight complexes. It was developed in \S3 of \cite{bws}; recall also that  in 
 \S1.3 of \cite{bwcp} some parts of the theory were exposed more carefully. In particular, in the latter paper it was noted that the weight complex functor is defined canonically only on a certain category $\cu_w$ that is (canonically) equivalent to $\cu$ and not on $\cu$ itself. 
Still distinctions between equivalent triangulated categories are irrelevant for our purposes; so make a choice of functor $t$  that is equivalent to the "canonic" weight complex $\cu_w\to K_\w(\hw)$. 
Here for any additive category $\bu$ the category  {\it weak homotopy category} $K_\w(\bu)$ of $\bu$-complexes is defined as follows.

\begin{defi}\label{dkw}
 $\obj K_\w(\bu)=\obj K(\bu)$, and 
$K_\w(\bu)(X,Y)=K_\w(\bu)(X,Y)/\backsim$ for complexes $X=(X^i)$ and $Y=(Y^i)\in \obj K_\w(\bu)$, where 
$\backsim$ denotes the following  ({\it weak homotopy}) relation: $f_1=(f_1^i):X\to Y$ is  weakly homotopic to $ (f_2^i)$ whenever  $f_1^i-f^i_2=d^{i-1}_Y\circ h^i+j^{i+1}\circ d^i_X$ for some collections of arrows $j^*,h^*:X^*\to X^{*-1}$ and all $i\in \z$. 
\end{defi}

\begin{pr}\label{pwc}
Let $\bu$ be an additive category; let $w$ be a 
weight structure on 
 $\cu$. 

Then the following statements are valid.

\begin{enumerate}
\item\label{iwc0} $K_\w(\bu)$ is a category, i.e., the weak homotopy equivalence relation is respected by compositions of $K(\bu)$-morphisms.

\item\label{iwcons} The natural projection $K(\bu)\to K_\w(\bu)$ is conservative, i.e., if  a  $K(\bu)$-morphism $h$ is an  isomorphism in  $K_\w(\bu)$ then $h$ is an isomorphism in  $K(\bu)$ as well.

\item\label{iwcf}
 An additive {\it weak weight complex} functor $t:\cu\to K_\w (\hw)$ is defined; 
$t$  respects coproducts of less than $\al$ objects if $w$ is $\al$-smashing (for a regular infinite cardinal $\al$). 

\item\label{iwcalc}
The restriction of $t$ to $\hw$ 
  embeds this category into $ K_\w(\hw)$.\footnote{Being more precise, 
 there exists an additive lift of $\hw$ to the aforementioned category $\cu_w$ such that the "canonical" weight complex functor $\cuw\to K_\w(\hw)$ 
 restricts to this embedding $\hw\to K_\w(\hw)$.}

\item\label{iwct}  If $
X[-1]\to Y\stackrel{f}{\to} Z\to X$ is a distinguished triangle in
$\cu$ then there exists a  lift 
 of $t(f)$ to a morphism $t'(f)\in K(\au)(t(Y),t(Z))$ such that  $t(X)\cong \co(t'(f))$. 

\item\label{iwcfwe}
If $F:\cu\to \du$ is a weight-exact functor (for some weight structure $v$ on a triangulated category $\du$) then the composition $t_v\circ F$ is isomorphic to $F_\w\circ t$, where $F_\w$ is the corresponding functor $K_\w(\hw)\to K_\w(\underline{Hv})$.

\item\label{iwcern}
$t(M)=0$ if and only if $M$ is a retract of an extension of a left degenerate object by a right degenerate one; if $\al>\alz$ then  these conditions are equivalent to $M$ itself being an extension of this form. Moreover, if $M$ is bounded below then $t(M)=0$ if and only if $M$ is left degenerate.
\end{enumerate}
 \end{pr}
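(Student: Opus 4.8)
The plan is to treat the homological bookkeeping in (\ref{iwc0})--(\ref{iwcfwe}) by the standard weight-complex formalism of \cite[\S3]{bws} (recalled more carefully in \cite[\S1.3]{bwcp}) and to spend the real effort on the kernel computation (\ref{iwcern}). For (\ref{iwc0}) one checks directly that if $f_1\backsim f_2$ via collections $(h^*,j^*)$ and $g$ is a chain map, then $gf_1-gf_2=d(gh)+(gj)d$ and symmetrically on the other side, so $\backsim$ is compatible with composition; the conservativity (\ref{iwcons}) of $K(\bu)\to K_\w(\bu)$ I would cite from loc.\ cit. Part (\ref{iwcf}) is the construction of $t$ from a fixed tower of weight truncations $w_{\le i}M$ with connecting maps $c_i$ (Proposition \ref{pbw}(\ref{ifilt})), the terms being $\co(c_i)[-i]\in\hw$; the whole point of passing to $K_\w$ is that two towers produce complexes that agree only up to weak homotopy, and coproduct-preservation for $\al$-smashing $w$ follows since a coproduct of fewer than $\al$ weight decompositions is again one (Proposition \ref{pneem}(1), Proposition \ref{pbw}(\ref{icopr},\ref{ismash})). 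Part (\ref{iwcalc}) is immediate: between objects of $\cu_{w=0}$ all differentials vanish, so the weak homotopy relation is empty and $K_\w(\hw)(t(M),t(N))=\hw(M,N)$. Parts (\ref{iwct}) and (\ref{iwcfwe}) are the (weak) exactness of $t$ and its compatibility with a weight-exact $F$, both obtained by choosing compatible towers; I would simply cite them, since (\ref{iwct}) is the key technical input I use below.

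For the \emph{if} direction of (\ref{iwcern}): a right $w$-degenerate $A\in\cap_l\cu_{w\le l}$ admits the trivial tower $w_{\le i}A=A$ (each $A\xrightarrow{\id}A\to 0$ is a weight decomposition), so all $\co(c_i)=0$ and $t(A)=0$ in $K_\w(\hw)$; dually $t(B)=0$ for left $w$-degenerate $B$ (trivial tower $w_{\le i}B=0$). Given a triangle $A\to M\to B\to A[1]$ with $A$ right- and $B$ left-degenerate, weak exactness (\ref{iwct}) realizes $t(M)$ as a cone between $t(A)=0$ and $t(B)=0$, so $t(M)=0$; since $t$ is additive it kills any retract of such an $M$ as well.

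For the \emph{only if} direction I would first record the clean bounded-below case (the ``moreover''). Its engine is the lemma that for $M\in\cu_{w\ge m}$ whose bottom weight-complex differential $d_m\colon T_m\to T_{m+1}$ is split monomorphic in $\hw$ one has $M\in\cu_{w\ge m+1}$; this is proved by transporting the splitting along the connecting map of the $m$-weight decomposition via an octahedron computation. Now if $M\in\cu_{w\ge m}$ and $t(M)=0$, weak contractibility forces $\id_{T_m}=j_{m+1}d_m$ (the $d_{m-1}$-term being absent), so $d_m$ is split mono; the lemma gives $M\in\cu_{w\ge m+1}$, and since $t(M)\cong 0$ remains its weight complex with bottom now in degree $m+1$, iterating yields $M\in\cu_{w\ge l}$ for all $l$, i.e.\ $M$ is left degenerate (the converse inclusion being the \emph{if} direction). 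For the general case I would strip off the left-degenerate part using Theorem \ref{tabo}(\ref{iabdj}): the triangle $\rinf(M)\to M\to\linf(M)\to\rinf(M)[1]$ has $\linf(M)\in\cu_{+\infty}$ (so $t(\linf(M))=0$) and $N:=\rinf(M)\in\obj\cu\abo$; hence $t(N)\cong t(M)=0$ by (\ref{iwct}), and it remains to prove that an almost bounded above $N$ with $t(N)=0$ is right degenerate. Once this is known, $M$ is an extension of the left-degenerate $\linf(M)$ by the right-degenerate $\rinf(M)$, as required.

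The main obstacle is exactly this last claim, and it is where countable homotopy colimits must enter: $N$ is not bounded above, so the bottom-splitting induction cannot be dualized (there is no ``top'' to descend from, and one has only coproducts, not products, so no dual of $\rinf$). Here I would use the presentation $N\cong\hcl_i w_{\le i}N$ of Theorem \ref{tabo}(\ref{iahcl}) together with the description of $\cu\abo_{w\abo\ge 0}$ as homotopy colimits of bounded objects (Theorem \ref{tabo}(\ref{iabdhcl})), feeding the two \emph{independent} weak-homotopy operators $h^*,j^*$ contracting $t(N)$ into this tower to show that each positive truncation $w_{\ge l+1}N$ is left degenerate; since $w_{\ge l+1}N\in\obj\cu\abo$ while a left-degenerate object lies in $\cu_{+\infty}$ and $\obj\cu\abo\perp\obj\cu_{+\infty}$ (Remark \ref{rabo}(2)), such an object is orthogonal to itself, hence zero, giving $N\in\cu_{w\le l}$ for every $l$. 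Finally, the refinement for $\al>\alz$ (that $M$ is \emph{itself} an extension, not merely a retract of one) follows because $\cu$ is then Karoubian (Remark \ref{rkar}(3)) and the class of extensions of left-degenerate by right-degenerate objects is Karoubi-closed: the orthogonality $(\cap_l\cu_{w\le l})\perp(\cap_l\cu_{w\ge l})[1]$ exhibits it as an extension-closed $\star$-product (Proposition \ref{pstar}(2)) to which the Eilenberg-swindle of Proposition \ref{karcl} and Corollary \ref{corokar} applies.
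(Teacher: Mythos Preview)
Your treatment of (\ref{iwc0})--(\ref{iwcfwe}), the ``if'' direction of (\ref{iwcern}), and the bounded-below ``moreover'' clause is essentially what the paper does: those parts are handled by citing \cite{bws}, \cite{bwcp}, and \cite{bkwn}, and your sketches of the underlying arguments are correct. Your Karoubi-closure argument for the $\al>\alz$ refinement is also fine (the paper instead just notes $\cu$ is Karoubian and cites \cite{bkwn}).

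The genuine gap is in your general ``only if'' direction of (\ref{iwcern}). Two problems. First, you invoke Theorem \ref{tabo} to produce the triangle $\rinf(M)\to M\to\linf(M)$, but that theorem requires $w$ to be $\aleph_1$-smashing; the first clause of (\ref{iwcern}) carries no such hypothesis, so your argument does not cover the stated generality. Second, even in the countably smashing case, the step you flag as ``the main obstacle'' is not actually carried out: you assert that one can feed the weak-nullhomotopy operators $h^*,j^*$ into the tower to show each $w_{\ge l+1}N$ is left degenerate, but this does not work as written. The weight complex of $w_{\ge l+1}N$ is (up to $K_\w$-isomorphism) the stupid truncation of $t(N)$ in degrees $\ge l+1$, and a weak nullhomotopy of $t(N)$ does \emph{not} restrict to one of this truncation: at the new bottom the relation $\id_{T_{l+1}}=d_l h_{l+1}+j_{l+2}d_{l+1}$ loses the $d_l$-term after truncation, so you cannot conclude that $d_{l+1}$ is split mono and re-run the bounded-below induction. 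Your sketch is thus a placeholder for an argument that is not there.

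The paper sidesteps all of this by citing Theorem 3.1.6(I) of \cite{bkwn} directly for the main equivalence; the actual mechanism in that reference is different from the $\rinf/\linf$ decomposition you attempt.
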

\begin{proof}
Assertion \ref{iwc0} is immediate from Lemma 3.1.4(I.1) of \cite{bws}.

Assertion \ref{iwcons} is precisely Proposition 3.1.8(1) of ibid. 

\ref{iwcf}. The existence of $t$ is  given by Theorem 3.2.2(II) of \cite{bws} (see also Proposition 1.3.4(3,6) of \cite{bwcp}; cf. Remark 1.3.3(1) of \cite{bkwn})
 $t$ is additive by construction. If $w$ is $\al$-smashing then $t$   respects coproducts of less than $\al$ objects according to  
 Proposition 2.3.4(2) of \cite{bwcp} (cf. Proposition 2.3.2(5) of ibid.).
 
Assertion \ref{iwcalc} is an easy consequence of definitions; recall that the terms of $t(M)$ are essentially computed by means of Proposition \ref{pbw}(\ref{ifilt}); cf. Remark  1.3.5(5) of \cite{bwcp}.

Assertions \ref{iwct} and \ref{iwcfwe}   
 are given by  Proposition 1.3.4(7,10) of ibid.

The "main" part of  assertion \ref{iwcern} is given by  Theorem 3.1.6(I) of \cite{bkwn}. Next, if  $\al>\alz$ then $\cu$ is Karoubian; hence one can 
apply  Theorem 2.3.4(II.1) of ibid.  
 To obtain the "moreover" part one can note that the restriction of $w$ to $\cu_+$ is right non-degenerate (immediately from the orthogonality axiom); hence we can apply the dual to Proposition 3.1.8(1) of ibid. to it. 
\end{proof}

Now we explain how to combine our results with 
possible applications of weight complexes (that are closely related to  Remark 3.1.9 of \cite{bkwn} and Proposition 
1.9 of \cite{binters}).

\begin{rema}\label{rdescr}
We restrict ourselves to the $\al$-smashing setting; yet the smashing version of our observations are certainly fine also.
So, assume that $\cu$ is 
 endowed with an $\al$-smashing weight structure $w$.

1. Then for any $\cp\in \obj \cu$ we certainly have $t(\obj \lan \cp \ral) \subset \obj \lan t(\cp) \ra^{\al}_{K(\hw)}$ (see Proposition \ref{pwc}(\ref{iwcf},\ref{iwct}). It is an interesting problem to find conditions  ensuring that $M\in \obj \lan \cp \ral$ if $t(M)\in  \obj \lan t(\cp) \ra^{\al}_{K(\hw)}$.

2. Let $F:\cu\to \du$ be a weight exact-functor, where $\du$ is endowed with a left non-degenerate  weight structure $v$. Then for a $w$-bounded below $M\in \obj \cu$ we obtain that $F(M)=0$ if and only if  $K(\hf)(t(M))=0$, where $K(\hf): K(\hw)\to K(\hv)$ is the functor coming from the restriction $\hf$ of $F$ to hearts (see Definition \ref{dwso}(\ref{id5})).

Recall now that 
if $\cu$ and $\du$  are $\alo$-smashing and $F$ respects countable coproducts 
then $F$ sends almost bounded above objects of $\cu$ into that of $\du$,  and   $ \obj \du\abo\perp \obj \du_{+\infty}$  (see Remark \ref{rabo}(2)). 
 Hence  $F(M)$ is right weight-degenerate if and only if $K(\hf)(t(M))=0$; in particular, 
  for  $M\in \obj \cu\abo\cap \obj\cu_+$ we have   $K(\hf)(t(M))=0$ if and only if $F(M)=0$.

3. In the latter statement one can certainly take $F$ to be a weight-exact localization. So assume that $\al>\alz$, $\cu$ is $\al$-smashing, and $F=\pi$ is the localization by $\du=\lan \co(S_0)\ral$ for some 
 $S_0\subset \mo(\hw)$. Then for $M$ as above we obtain that $M\in \obj \du$ if and only if $K(\hf)(t(M))=0$; the smashing version of this statement is also valid.\footnote{If one 
 wants to avoid 
 the existence of localizations problem here then she can apply the corresponding  argument from the proof of Theorem \ref{theart}.} Moreover, Theorem \ref{theart}(3,4) (see also Remark \ref{rheart}(\ref{irh2}) provides us  with certain descriptions of the functor 
$\hf$ (that is "very simple" if $\du=\lan B_0\rab$ for some $B_0\subset \cu_{w=0}$; see Remark \ref{rrheart}) .
  
4. Now we would like to describe certain conditions that ensure for an object $M$ as above that it belongs to $\obj \du$ if it belongs to $\obj \cu_1\cap \obj \cu_2$ for
certain strict triangulated subcategories $\cu_i$ of $ \cu$; one may say that $\du$ is a   "candidate" for $\obj \cu_1\cap \obj \cu_2$.
If $\obj \du\subset \obj \cu_1\cap \obj \cu_2$ then this is equivalent to asking whether $\pi(M)=0$ whenever $M$ essentially belongs both to $\pi(\obj \cu_1)$ and to $\pi(\obj \cu_2)$. 

In \cite{binters} an interesting motivic example of this sort was considered; in it (as well as in the 
abstract Proposition 
1.9 of ibid. that was used for the proof) the vanishing of $t_{\cu'}(M)$ 
 essentially followed from $t_{\cu'}(\pi(\cu_1))\perp_{K(\hw')} t_{\cu'}(\pi(\cu_2))$. So we describe certain assumptions ensuring that the latter condition is fulfilled (even though it is certainly stronger than the weight-degeneracy of all elements of $\pi(\cu_1)\cap \pi(\cu_2)$). 
Our assumptions are much weaker than the ones considered in ibid.; we recall that (in the notation of Proposition \ref{pinters} below and 
 in addition to its assumptions) it was assumed that $w$ is class-generated by a set $\cp$ of compact objects,  whereas $\cp_1,\cp_2$, and $B=\co(S)$ were subsets of $\cp$.  
 \end{rema}

We would need a certain notion that is somewhat relevant for  the construction of weight structures. 

\begin{rema}\label{rpperf}
We will say that  $\cp$ is {\it $\al$-pseudo-perfect} (resp.  {\it pseudo-perfect}; cf. Remark \ref{rcompar}(\ref{iperf})) if $\cp$ is negative and $\cp\perpp$ is $\al$-smashing (resp. smashing).

Obviously, if this is the case then $\cp$ is $\al$-negative (resp. class-negative); hence 
the pseudo-perfectness conditions can be useful for the application of Theorem \ref{tvneg} and Corollary \ref{classgws}, respectively. 
\end{rema}

\begin{pr}\label{pinters}
Let $w$ be  a 
smashing weight structure on (a smashing) $\cu$, $M\in \obj \cu_+\cap \obj \cu\abo$,  $S_0$ is a class of $\hw$-morphisms, and   
$\du=\lan \co(S_0)\rab$.
Assume that there exists a universal  functor $L:\hw\to \hu$ among those additive functors that respect coproducts and make all elements of $S_0$ invertible, 
 and that $\cp_1,\cp_2\subset \obj \cu$ satisfy one of  the following 
  conditions:

(1) the class  $K(L)(t(\cp_1))$ is pseudo-perfect in $K(\hu)$ 
  and $K(L)(t(\cp_1))\perp_{K(\hu)} (\cup_{i\in \z}K(L)(t(\cp_2)[i]) )$;

(2) $\cp_2$ is class-negative and for any $C\in \cp_1$ there exist $n_1,n_2\in \z$ such that $K(L)(t(C))\perp_{K(\hu)}K(L)(t(\cup_{n_1<i<n_2}\cp_2[i]\cup \cu_{2,w_2\le n_1}\cup \cu_{2,w_2\ge n_2})) $, where $w_2$ is the weight structure on $\cu_2= \lan \cp_2\rab$ that is class-generated by $\cp_2$.

Then $M$ belongs to $\obj \du$ whenever it belongs to $\obj \cu_1\cap \obj \cu_2$.

\end{pr}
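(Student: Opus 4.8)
The plan is to reduce the assertion to the vanishing of $K(L)(t(M))$ and then to run an Eilenberg swindle on orthogonality classes. First I would identify $L$ with the heart functor of the localization by $\du=\lan\co(S_0)\rab$: by Theorem \ref{theart}(1,3) the target $\hu$ of the universal functor $L$ is (canonically isomorphic to) $\hw[S\ob]$, and the functor $\hf$ factors as $L$ followed by a full embedding $\hw[S\ob]\to \hw'$. Since $M\in \obj \cu\abo\cap \obj \cu_+$, Remark \ref{rdescr}(3) (smashing version) shows that $M\in \obj \du$ if and only if $K(\hf)(t(M))=0$, and as the embedding $\hw[S\ob]\to \hw'$ is fully faithful this is equivalent to $X:=K(L)(t(M))=0$. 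So it suffices to prove $X=0$. Writing $P=K(L)(t(\cp_1))$, $Q=K(L)(t(\cp_2))$ and $G=\cup_{i\in \z}P[i]$, and using that $t$ and $K(L)$ are additive, exact and respect coproducts (Proposition \ref{pwc}(\ref{iwcf},\ref{iwct}) together with the assumption that $L$ respects coproducts), Remark \ref{rdescr}(1) gives $X\in \obj \lan P\rab=[G\brab$ from $M\in \obj \cu_1$, and $X[j]\in \obj \lan Q\rab=[(\cup_i Q[i])\brab$ for all $j\in \z$ from $M\in \obj \cu_2$ (here $\lan Q\rab$ is shift-closed).

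The endgame is the following swindle, which I would isolate first: if $G\perp_{K(\hu)} X$ then $X=0$. Indeed, the left orthogonal class $\perpp X$ is always extension-closed and closed with respect to all coproducts, since $\cu(\coprod Y_k,X)=\prod \cu(Y_k,X)$; as it contains $G$ it therefore contains $[G\brab=\obj \lan P\rab$, which contains $X$. Hence $\cu(X,X)=\ns$, so $\id_X=0$ and $X=0$. I stress that this step uses no perfectness at all — only the key orthogonality $G\perp X$, i.e.\ $P[i]\perp X$ for every $i$, equivalently $P\perp X[j]$ for every $j$. Since each $X[j]\in \obj \lan Q\rab$, it suffices to establish $P\perp \obj \lan Q\rab$.

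Under hypothesis (1) this last point is immediate: pseudo-perfectness of $P$ means precisely that $P^\perp$ is smashing (Remark \ref{rpperf}), and $P^\perp$ is automatically extension-closed; since the assumed orthogonality places $\cup_i Q[i]$ inside $P^\perp$, we obtain $[(\cup_i Q[i])\brab=\obj \lan Q\rab\subset P^\perp$, as needed. Under hypothesis (2) I would instead exploit the weight structure $w_2$ class-generated by $\cp_2$ on $\cu_2=\lan \cp_2\rab$ (which exists by Corollary \ref{classgws}(1)), and reduce to the specific objects $D=M[j]\in \obj \cu_2$: it suffices to show $K(L)(t(C))\perp K(L)(t(D))$ for each $C\in \cp_1$ and each such $D$. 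For the $n_1,n_2$ attached to $C$, a pair of $w_2$-weight decompositions presents $D$ as an iterated (finite) extension of an object of $\cu_{2,w_2\le n_1}$, an object of $\cu_{2,[n_1+1,n_2-1]}$, and an object of $\cu_{2,w_2\ge n_2}$, where by Proposition \ref{pbw}(\ref{iextcub}) the middle term is a finite iterated extension of the classes $\cu_{2,w_2=i}$ with $n_1<i<n_2$, each of which consists of retracts of coproducts of copies of $\cp_2[i]$ (Corollary \ref{classgws}(1)). Applying the exact coproduct-preserving functor $K(L)\circ t$ and using that $\{K(L)(t(C))\}^\perp$ is extension- and retract-closed, the windowed orthogonality hypothesis then yields $K(L)(t(C))\perp K(L)(t(D))$, hence $G\perp X$.

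I expect the main obstacle to lie exactly in this last step of case (2): a single object's right orthogonal complement need not be closed under the infinite coproducts occurring inside the weight-zero pieces $\cu_{2,w_2=i}$, so orthogonality to the generators $\cp_2[i]$ does not formally upgrade to orthogonality to all of $\cu_{2,[n_1+1,n_2-1]}$. The hypothesis is engineered to skirt this — the two coproduct-closed tail classes $\cu_{2,w_2\le n_1}$ and $\cu_{2,w_2\ge n_2}$ are included wholesale in the orthogonality, so their coproducts cost nothing, leaving only a finite weight-window to control — and I would close the remaining gap by invoking the boundedness of $M$ (hence of its weight complex) together with the homotopy-colimit description of almost bounded above objects from Theorem \ref{tabo}, so that the coproducts appearing in the window are genuinely absorbed rather than confronted directly.
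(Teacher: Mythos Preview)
Your overall strategy coincides with the paper's: reduce to the vanishing of $K(L)(t(M))$ via the identification of $\hu$ with a full subcategory of $\hw'$ (Theorem~\ref{theart}), use the self-orthogonality swindle, and for case~(2) decompose objects of $\cu_2$ via the weight structure $w_2$. Your treatment of case~(1) is correct and essentially identical to the paper's.

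There is a genuine gap in your argument for case~(2), and you have correctly located it: from the hypothesis you only know $K(L)(t(C))\perp K(L)(t(P[i]))$ for individual $P\in\cp_2$ and $n_1<i<n_2$, whereas the weight-zero pieces $\cu_{2,w_2=i}$ consist of retracts of arbitrary coproducts of such $P[i]$'s, and the right orthogonal of a single object is not closed under coproducts. Your proposed fix --- appealing to the boundedness of $M$ and the homotopy-colimit description from Theorem~\ref{tabo} --- does not close this gap. The relevant coproducts arise in the $w_2$-weight decomposition of $M[j]$ inside $\cu_2$, not in the $w$-weight complex of $M$ in $\cu$; the facts that $M$ is $w$-bounded below and almost $w$-bounded above say nothing about the cardinalities appearing in the $w_2$-truncations of $M[j]$, so neither boundedness of $t(M)$ nor the $\hcl$-presentation of $\rinf(M)$ is pertinent.

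The paper does not attempt your fix. Instead it first reduces (as in Theorem~\ref{theart} and Remark~\ref{rheart}(\ref{irhcl})) to the ``obvious $\al$-smashing version'' of the proposition for all regular $\al>\alz$ and small $\cu$; this is the step you omit. In that setting all coproducts in sight are of cardinality less than a fixed $\al$, and the paper then asserts directly that the hypothesis yields $t_{\cu'}(\pi(C))\perp_{K(\hw')} t_{\cu'}\circ\pi(\cu_{2,[n_1+1,n_2-1]})$, citing only Proposition~\ref{pwc}(\ref{iwcf},\ref{iwct}). The paper's argument here is terse at precisely the point you worry about; nevertheless, the reduction to a fixed $\al$ and a small ambient category is what the paper relies on, and your proposal lacks this reduction. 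If you want to follow the paper, you should insert that reduction and argue as it does; if you want an independent route, you would need to supply a genuine reason why the middle-window coproducts are controlled --- and ``$M$ is bounded'' is not one.
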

\begin{proof}
Similarly to Theorem \ref{theart} (see its proof and Remark \ref{rheart}(\ref{irhcl})) 
 it suffices to prove the obvious $\al$-smashing  version of the proposition for all regular cardinals $\al>\alz$ 
 and all small $\cu$; this is what we will actually do (to avoid class-categories and larger 
 universes).\footnote{Certainly, in this setting the existence of $L$ is automatic; see Theorem \ref{theart}(3).} 
Next, as we have explained above, it suffices to verify that $t_{\cu'}(\pi(M))=0$, and the latter certainly follows from $t_{\cu'}(\pi(M))\perp t_{\cu'}(\pi(M))$. Thus it suffices to verify that $t_{\cu'}\circ \pi(\obj \cu_1)\perp_{K(\hw')} t_{\cu'}\circ \pi(\obj \cu_2)$. Once again, in the $\al$-smashing setting the latter condition follows from  $\lan t_{\cu'}\circ \pi(\cp_1)\ra^{\al}_{K(\hw')}\perp \lan t_{\cu'}\circ \pi(\cp_2)\ra^{\al}_{K(\hw')}$ (see Remark \ref{rdescr}(4)). 

Since $\hu$ is a full subcategory of $\hw'$ (see Theorem \ref{theart}(3)), the latter assumption follows from 
 (1) easily. So we assume that  condition (2) is fulfilled.

 Let us now fix some $C\in \cp_1$.
We recall that 
$\cu_{2,[n_1+1,n_2-1]}$  equals the extension-closure of $\cup_{n_1<j <n_2}\cu_{2,w_2=j}$ (see Proposition \ref{pbw}(\ref{iextcub})), 
whereas $\cu_{2,w_2=j}$ is the Karoubi-closure of the class of all 
$\{\coprod_{i\in I}P_i[j]\}$, where $P_i\in \cp_2$ and $I$ is of cardinality less than $\al$ (see Theorem \ref{tvneg}(\ref{itvn3})); 
 hence our assumptions imply that $t_{\cu'} (\pi(C))\perp_{K(\hw')} t_{\cu'}\circ \pi(\cu_{2,[n_1+1,n_2-1]})$ (here we apply Proposition \ref{pwc}(\ref{iwcf},\ref{iwct}). Since $t_{\cu'} (\pi(C))$ is also $K(\hw')$-orthogonal to $t_{\cu'}\circ \pi(\cu_{2,w_2\ge n_2})$, combining Proposition \ref{pbw}(\ref{iwd0}) with Proposition \ref{pwc}(\ref{iwct}) we also obtain  that $t_{\cu'} (\pi(C))\perp t_{\cu'}\circ \pi(\cu_{2,w_2\ge n_2})$. Recalling that $t_{\cu'} (\pi(C))$ is orthogonal to $t_{\cu'}\circ \pi(\cu_{2,w_2\le n_1})$ and 
considering $n_1$-weight decompositions of  objects of $\cu_2$ we obtain $t_{\cu'} (\pi(C))\perp t(\cu_2)$. Since this is fulfilled for every $C\in \cp_1$, applying Proposition \ref{pwc}(\ref{iwcf},\ref{iwct}) once again we conclude that  $t_{\cu'}\circ \pi(\obj \cu_1)\perp_{K(\hw')} t_{\cu'}\circ \pi(\obj \cu_2)$ as desired.
\end{proof}

\begin{rema}\label{rinters}
1. The existence for each $C$ of $n_1,n_2\in \z$ such that $K(L)(t(C))\perp_{K(\hu)}K(L)(t( \cu_{2,w_2\le n_1}\cup \cu_{2,w_2\ge n_2})) $
 immediately follows from the following conditions: for the composed functor $G:\cu_2\to \cu\to \cu'$ there exist $m_1,m_2\in \z$ such that $G\circ [m_1]$ is left weight-exact, $G\circ [m_2]$ is right weight-exact (with respect to $w_2$ and $w'$), and $\pi(C)$ is $w'$-bounded for each $C\in \cp_1$. Certainly, if one restricts himself to the case $\cp_2\subset \cu_{w=0}$ then $G$ is weight-exact (see Theorem \ref{tvneg}(\ref{itvn4}); thus one can take $m_1=m_2=0$).

2.  If one only assumes that $M\in\obj \cu\abo$ then the remaining assumptions would imply that $\pi(M)$ is right weight-degenerate if $M\in \obj \cu_1\cap \obj \cu_2$ (see Remark \ref{rdescr}). 
 
3. Instead of the two weight decomposition arguments used in the proof one may  apply an easy {\it weight spectral sequence} argument; see Theorem 2.3.2 of \cite{bws}.

\end{rema}

\subsection{Two  lemmas on localizations}\label{slloc}


Now we prove two statements that were applied above. Probably, both of them are well-known to the experts; the first proof follows an argument suggested by D.-Ch. Cisinski.

\begin{pr}\label{addloc}
Let $\al$ be a regular infinite cardinal; suppose $\bu$ is an $\al$-smashing additive category, $S$ is a set of morphisms in $\bu$ containing identities and closed under coproducts of less than $\al$ elements. Then if the localized category $\bu[S^{-1}]$ exists then it is additive and $\al$-smashing, and  the localization functor $\pi:\bu\to \bu[S^{-1}]$ is additive and respects coproducts of less than $\al$ objects.

\end{pr}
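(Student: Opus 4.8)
The plan is to exploit that a Gabriel--Zisman localization is the identity on objects, so that $\obj(\bu[S^{-1}])=\obj\bu$ and every object is $\pi(X)$; consequently the whole statement reduces to understanding how $\pi$ interacts with the (bi)coproducts already present in $\bu$. Concretely, I would first prove that $\pi$ preserves coproducts of fewer than $\al$ objects, and then bootstrap the additive structure from the finite case. The additivity of $\bu[S^{-1}]$ is the \emph{soft} part: once I know that $\pi$ takes the finite biproduct $X\oplus Y$ of $\bu$ to a biproduct of $\pi X$ and $\pi Y$ in $\bu[S^{-1}]$, the standard fact that a category with finite biproducts is canonically enriched in commutative monoids (with $f+g=\nabla\circ(f\oplus g)\circ\Delta$) equips $\bu[S^{-1}]$ with a semiadditive structure for which $\pi$ is additive on hom-sets. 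To upgrade commutative monoids to abelian groups I would apply $\pi$ to the identity $\id_X+(-\id_X)=0$ in $\bu$: since $\pi$ is additive, $\pi(-\id_X)$ is an additive inverse of $\id_{\pi X}$, and precomposition then furnishes inverses for all morphisms, so $\bu[S^{-1}]$ is additive and $\pi$ is additive.

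Thus everything hinges on the preservation of $<\al$-coproducts, and here the hypotheses on $S$ enter. The finite case is clean: given $\phi_1,\dots,\phi_n\colon \pi X_k\to W$, I would represent each by a finite zigzag of $\bu$-morphisms and formal inverses of elements of $S$, pad all of them to a common length using identities (which lie in $S$), and assemble them into a single zigzag out of $\coprod_k X_k$ by taking coproducts step by step; the backward arrows become finite coproducts of elements of $S$ together with identities, hence lie in $S$ by hypothesis and remain invertible, while the forward arrows are composed with the codiagonal $\coprod_k W\to W$. Restricting along the coprojection $\iota_k$ recovers $\phi_k$, which gives the universal property and hence that $\pi(X\oplus Y)$ is a coproduct; the dual assembly (using the diagonal $W\to\coprod_k W$ and the fact that finite coproducts equal finite products in $\bu$, with $S$ closed under these) shows it is simultaneously a product, i.e.\ a genuine biproduct. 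This is exactly what the first paragraph needs.

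For the full $\al$-smashing claim I would run the same assembly for a family $(\phi_i)_{i\in I}$ with $|I|<\al$: the candidate coproduct is $\pi(\coprod_{i\in I}X_i)$, the common denominators are now $<\al$-fold coproducts $\coprod_{i} s_i$ of elements of $S$, which remain in $S$ precisely because $S$ is closed under coproducts of fewer than $\al$ elements, and regularity of $\al$ guarantees that the index sets involved stay of size $<\al$ throughout. \textbf{The main obstacle is exactly here.} A single morphism out of $\coprod_i X_i$ is a zigzag of some \emph{finite} length whose backward arrows are shared by all components, so to assemble the $\phi_i$ into one morphism I must first represent them by zigzags of a common finite length; but a priori the lengths of their chosen representatives may be unbounded over the infinite set $I$, and, since $S$ is not assumed closed under composition, there is no calculus of fractions available to shorten them. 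Overcoming this is the heart of the argument: one must show that every $<\al$-indexed family of morphisms into a fixed target is simultaneously representable with a uniform denominator, using the $<\al$-coproduct closure of $S$ to build that denominator ``in parallel''. I expect to handle this either by a direct uniformization argument along these lines, or---more robustly---by transporting the problem to the localization of the additive presheaf category $\mathrm{Add}(\bu^{op},\ab)$ at the images of $S$, where the reflector is a coproduct-preserving left adjoint, checking that it identifies $\coprod_i y(X_i)$ with $y(\coprod_i X_i)$, and then matching this additive localization with the plain $\bu[S^{-1}]$ via uniqueness of the additive structure established in the first two paragraphs.
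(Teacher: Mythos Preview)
Your approach is genuinely different from the paper's, and the difference is instructive.

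The paper never touches zigzags. Its engine is a single categorical principle: an adjunction $L:\bu\leftrightarrows\hu:R$ descends to an adjunction $\hat L:\bu[S^{-1}]\leftrightarrows\hu[T^{-1}]:\hat R$ whenever $L(S)\subset T$ and $R(T)\subset S$, because the unit and counit pass straight through the localization. This is applied three times: to $\bu\to\pt$ (zero object), to the diagonal $\bu\to\bu\times\bu$ together with both its adjoints (binary products and coproducts, hence biproducts), and finally to the diagonal $\bu\to\bu^{I}$ with its left adjoint $\coprod_{I}$ for $|I|<\al$. The group (as opposed to monoid) structure is obtained by observing that $\pi$ preserves products and hence group objects. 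Your biproduct-then-semiadditive-then-invert-$\id$ route to additivity is correct and essentially equivalent to the paper's once finite biproducts are in hand; the real divergence is in how the (bi)coproducts themselves are produced.

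What the adjunction trick buys is precisely an escape from the obstacle you flagged. You are trying to assemble an $I$-indexed family of morphisms in $\bu[S^{-1}]$ into a single morphism, and for infinite $I$ you rightly worry that their zigzag lengths are unbounded and that $S$ admits no calculus of fractions to shorten them. The paper never assembles morphisms: it assembles \emph{functors}, and the unit/counit of $\coprod_{I}\dashv\Delta$ are single natural transformations whose images under $\pi$ still satisfy the triangle identities. The only place where morphism-level assembly appears is in the identification $(\bu\times\bu)[(S\times S)^{-1}]\simeq\bu[S^{-1}]\times\bu[S^{-1}]$, which the paper proves for $|I|=2$ by the interleaving $(f,\id)\circ(\id,f')$; the infinite case is then handled by the phrase ``the same adjunction argument,'' so one could say your obstacle is not so much dissolved as repackaged into the commutation of localization with the $I$-fold product of categories.

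As written, your proposal is complete and correct for the additivity clause but genuinely incomplete for the $\al$-smashing clause: the ``direct uniformization'' is a hope rather than an argument, and the presheaf detour is a substantially different (and heavier) proof than anything you have sketched. If you want to finish along your own lines, the cleanest fix is to adopt the paper's adjunction principle just for $\coprod_{I}\dashv\Delta$; it requires only that $S$ contain identities (so $\Delta(S)\subset S^{I}$) and be closed under $<\al$-coproducts (so $\coprod_{I}(S^{I})\subset S$), which are exactly the hypotheses you have.
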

\begin{proof}
To check that the category is additive it suffices to verify that 
(binary)  coproducts and  products exist in it, $A \times B \cong A \coprod B$ for any pair of objects, 
and every object is a group object.

Firstly, note that every adjoint pair of functors $L : \bu \leftrightarrows \hu : R$
induces an adjoint pair $\hat{L} : \bu[S^{-1}] \leftrightarrows \hu[T^{-1}] :
\hat{R}$ if $L(S) \subset T$ and $R(T) \subset S$. Indeed, the latter assumptions imply
that the functors $Loc_T \circ L$ and $Loc_S \circ R$ send $S$ and $T$ into
isomorphisms of the categories $ \hu[T^{-1}]$ and $ \bu[S^{-1}]$, respectively; hence they induce some functors $\hat{L}$ and $\hat{R}$ on the
localized categories.
Now, the unit and the counit for the  pair $(L,R)$ yield the unit and the counit of the pair
$(\hat{L},\hat{R})$. 

Now we apply the above observation to the functor $\bu \to \operatorname{pt}$, where $\operatorname{pt}$ denotes the category with one object and one morphism and its right and left adjoints (i.e., terminal and initial objects). This yields that zero object exists in the category $\bu[S^{-1}]$. 

Similarly we apply this observation to the diagonal functor $\bu \to \bu \times \bu$ along with its right and left adjoints (i.e., with the binary product
and coproduct functors, respectively). 
Note that the natural functor $(\bu\times\bu)[(S \times S)^{-1}] \stackrel{e}\to \bu[S^{-1}]\times \bu[S^{-1}]$ is an equivalence of categories. Indeed, 
the correspondence $\bu[S^{-1}](A,B)\times \bu[S^{-1}](A',B') \to (\bu\times \bu)[(S\times S)^{-1}]((A,A'),(B,B'))$ that maps $(f,f') \in \bu[S^{-1}](A,B)\times \bu[S^{-1}](A',B')$ into the element $(f,\id_{B'})\circ (\id_A,f')$ defines an inverse functor to $e$ (it is well-defined since $S$ contains all identities). 
We obtain that in $\bu[S^{-1}]$
all 
binary coproducts and products exist, the localization functor preserves them, and the natural morphism $A \coprod B \to A \times B$ is an isomorphism for any objects $A$ and $B$.

Hence every object of $\bu[S^{-1}]$ possesses the induced structure of a monoid object. 
 Let us verify that this monoid structure is a group structure.
The localization functor $Loc_S$ preserves (binary) products; thus the image of
a group object has the structure of group object. 
 Since $Loc_S$ is
surjective on objects, we obtain the result.

Lastly,  for any index set $I$ of cardinality less than $\al$ the functor $\coprod_I:\bu^I\to \bu$ is  left adjoint to the diagonal functor $\bu \to \bu^I$. Hence the same adjunction argument as above yields that $\bu[S\ob]$ has coproducts and $\pi$ respects them. 
\end{proof}


\begin{pr}\label{plocsub}
Let $\du$ and $\eu$ be full strict triangulated subcategories of $\cu$. Assume that the Verdier localization functor $\pi:\cu\to\cu/\du$ exists and
that for any 
$D\in \obj \du$ and $E\in \obj \eu$ there exists a $\du$-distinguished triangle \begin{equation}\label{eww} D'\to D\to D''\to D'[1]
\end{equation} 
 such that $E \perp D''$ and $D'\in \obj \eu$. 

Then the restriction of $\pi$ to $\eu$ is isomorphic to the localization 
  of $\eu$ by the triangulated subcategory whose object class is $\obj \du\cap \obj \eu$ (and so, the latter localization exists in the sense of Remark \ref{rclass}(3)).
	
	In particular, this statement is valid if $\cu$ is endowed with a weight structure $w$, $\eu=\cu_-$, and for any object of  $D_0$ of $\du$ there exists a $\du$-distinguished triangle \begin{equation}\label{ewl} D_1\to D_0\to D_2\to D_1[1]\end{equation} 
	(\ref{eww}) such that $D_1\in  \obj \cu_-$ and $D_2\in \cu_{w\ge 0}$.

\end{pr}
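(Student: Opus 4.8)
The plan is to first reinterpret the hypothesis as a factorization condition in the style of Verdier, and then to deduce full faithfulness of the canonical functor $F:\eu/(\obj\du\cap\obj\eu)\to\cu/\du$ by the standard calculus of fractions. Concretely, I would begin by observing that the assumed triangle says exactly the following: every morphism $\psi\colon E\to D$ with $E\in\obj\eu$ and $D\in\obj\du$ factors through an object of $\obj\du\cap\obj\eu$. Indeed, given the triangle $D'\xrightarrow{a} D\xrightarrow{b} D''\to D'[1]$ with $E\perp D''$ and $D'\in\obj\eu$, the composite $b\circ\psi$ lies in $\cu(E,D'')=\ns$, so $b\circ\psi=0$ and $\psi$ factors as $a\circ\psi'$ through $D'\in\obj\du\cap\obj\eu$. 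Since the functor $\eu\to\cu/\du$ kills $\obj\du\cap\obj\eu$, it induces $F$, and it remains to prove $F$ is fully faithful.

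For fullness, I would take an arbitrary $\cu/\du$-morphism between $E_1,E_2\in\obj\eu$, represented by a roof $E_1\xleftarrow{s} W\xrightarrow{f} E_2$ with $\co(s)=D\in\obj\du$. Rotating the triangle of $s$ gives $g\colon E_1\to D$, a morphism from $\eu$ to $\du$, which by the factorization above equals $a\circ g'$ with $g'\colon E_1\to D'$ and $D'\in\obj\du\cap\obj\eu$. Completing $g'$ to a triangle $W'\xrightarrow{s'} E_1\xrightarrow{g'} D'\to W'[1]$ yields $W'\in\obj\eu$ (as $\eu$ is triangulated and $E_1,D'\in\obj\eu$) and $\co(s')=D'\in\obj\du\cap\obj\eu$. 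The relation $g=a\circ g'$ produces a morphism of triangles, hence $t\colon W'\to W$ with $s\circ t=s'$; then the roof $E_1\xleftarrow{s'} W'\xrightarrow{f\circ t} E_2$ lives entirely in $\eu$ and, via $t$, represents the same $\cu/\du$-morphism. Thus $F$ is full.

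For faithfulness, I would take $\phi$ in $\eu/(\obj\du\cap\obj\eu)$ with $F(\phi)=0$, represented by a roof $E_1\xleftarrow{s} W\xrightarrow{f} E_2$ inside $\eu$; since $s$ is invertible in the quotient, vanishing of $F(\phi)$ forces $f$ to map to $0$ in $\cu/\du$, which by the standard description of Verdier quotients means $f$ factors through some $D_0\in\obj\du$, say $f=\beta\circ\alpha$ with $\alpha\colon W\to D_0$. Now $\alpha$ is again a morphism from $\eu$ to $\du$, so it factors through $\obj\du\cap\obj\eu$; hence $f$ factors through $\obj\du\cap\obj\eu$ and is already $0$ in $\eu/(\obj\du\cap\obj\eu)$, giving $\phi=0$.

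Finally, for the ``in particular'' assertion I would verify that the weight-structure hypothesis implies the general one. Given $D_0\in\obj\du$ and $E\in\obj\cu_-$, choose $n$ with $E\in\cu_{w\le n}$, apply the assumed decomposition to $D_0[-n-1]$, and shift by $[n+1]$ to obtain a $\du$-distinguished triangle $D_1'\to D_0\to D_2'\to D_1'[1]$ with $D_1'\in\obj\cu_-$ and $D_2'\in\cu_{w\ge n+1}$; since $\cu_{w\le n}\perp\cu_{w\ge n+1}$ (by the orthogonality axiom, cf. Proposition \ref{pbw}(\ref{iort})), we get $E\perp D_2'$, which is exactly the required triangle with $\eu=\cu_-$. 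I expect the only real care to be in the calculus-of-fractions bookkeeping — in particular, keeping track that the hypothesis controls precisely the morphisms out of $\eu$ into $\du$, which is exactly the direction needed for both fullness and faithfulness; the underlying triangulated manipulations themselves are routine.
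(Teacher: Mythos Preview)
Your argument is correct and follows essentially the same route as the paper's proof: the key observation --- that every $\cu$-morphism from an object of $\eu$ to an object of $\du$ factors through an object of $\obj\du\cap\obj\eu$ --- is precisely what both proofs extract from the hypothesized triangle, and the ``in particular'' clause is handled identically by shifting. The only difference is cosmetic: the paper invokes Lemma 10.3.13 of \cite{weibook} (the equivalence (1)$\iff$(2) there), which packages exactly the calculus-of-fractions verification you carry out by hand for fullness and faithfulness; your version is thus more self-contained but otherwise the same.
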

\begin{proof}
According to Lemma 10.3.13 of \cite{weibook} (we use the equivalence (1)$\iff$(2) there), we should check that for any $\cu$-morphism $f:M \to E$ such that $E\in \obj \eu$ and $\co f\in \obj \du$ there exists a $\cu$-morphism $f':M'\to M$ such that $M'\in \obj \eu$ and $\co (f\circ f')\in \obj \du$.

Now we complete $f$ to a distinguished triangle $M\stackrel{f}{\to}E\stackrel{g}{\to}D\to M[1]$ and choose a $\du$-distinguished triangle (\ref{eww}) (for $D=\co(f)$). Since $E\perp D''$, the morphism $g$ factors through some $g':E\to D'$. Then  $M'=\co (g)[-1]\in \obj \eu$ and 
the octahedron axiom of triangulated categories gives the existence of $f'\in \cu(M',M)$ such that $\co (f\circ f')\in \obj \du$ (since $\co f'=D''[-1]\in \obj \du$).

It remains to verify the "in particular" part of our assertion. We fix $D\in \obj D$ and $E\in \obj \cu_-$ and verify the existence of a distinguished triangle (\ref{eww}) as desired.  Since $E\in \obj \cu_-$, $E$ belongs to $\cu_{w\le i}$ for some $i\in \z$. Now, 
we take $D_0=D[-i-1]$, choose some distinguished triangle (\ref{ewl}), and shift it by $i+1$ to obtain a candidate for the triangle (\ref{eww}). Then (\ref{eww}) is certainly a $\du$-distinguished triangle, $D'=D_1[i+1]\in \obj \cu_-$, and  since $D''=D_2[i+1]\in \cu_{w\ge i+1}$ we have $E\perp D''$ (by the ortogonality axiom for $w$). 


\end{proof}

\end{document}